\begin{document}

\catchline{}{}{}{}{} 

\markboth{Yong Yao}{Dynamics of a Prey-predator System with Foraging Facilitation in Predators}

\title{Dynamics of a Prey-predator System with Foraging\\ Facilitation in Predators}

\author{Yong Yao\footnote{ Author for correspondence}}

\address{Department of Mathematics, Sichuan University,\\
Chengdu, Sichuan 610064, PR China\\
mathyaoyong@163.com}

\maketitle

\begin{history}
\received{(to be inserted by publisher)}
\end{history}

\begin{abstract}
The dynamics of a prey-predator system with foraging facilitation among predators
are investigated. The analysis involves the computation of many semi-algebraic systems of
large degrees. We apply the pseudo-division reduction, real-root isolation technique and
complete discrimination system of polynomial to obtain parameter conditions for the
exact number of equilibria and their qualitative properties as well as a complete investigation of
bifurcations including saddle-node, transcritical, pitchfork, Hopf and Bogdanov-Takens bifurcations.
Moreover, numerical simulations are presented to support our theoretical results.
\end{abstract}

\keywords{prey-predator; foraging facilitation; bifurcation; pseudo-division; complete discrimination system.}

\section{Introduction}
\noindent Populations rarely exist in isolation, which results in ecological systems are characterized by the interaction between species and environment.
Mathematical models play important roles in understanding population interactions (\cite{Freedman,Kot}).
An important type of interaction is predation,
which leads to prey-predator models that have great importance in ecology.
One of the classic prey-predator models, the Rosenzweig-MacArthur model(\cite{Rosenzweig}), is given by
\begin{eqnarray}
\begin{array}{l}
\frac{dN}{dt}=rN(1-\frac{N}{K})-\frac{E NP}{1+HE N},~~~
\frac{dP}{dt}=e\frac{ENP}{1+HE N}-mP,
\end{array}
\label{(00)}
\end{eqnarray}
where $N(t)$ and $P(t)$ represent densities of the prey and predator at time $t$ respectively, $r$  stands for
the intrinsic growth rate of prey,  $K$ is the carrying capacity of prey, $e$ is the conversion rate,  $m$ is the mortality rate of predator, $E$ is the encounter rate of predator with the prey and $H$ is the  predator handling time of a prey individual. Some researchers (\cite{Hsu,Cheng,Huang,Turchin,Kot}) have studied the dynamical behaviors of system (\ref{(00)}), which has a coexistence equilibrium rose from transcritical bifurcation and
a unique limit cycle induced by Hopf bifurcation.  They also have shown
both the prey and predator populations survive either to the coexistence equilibrium or the limit cycle.
Another widespread type of interaction in ecological systems is cooperation among individuals (\cite{Dugatkin}),
which seems to be an important evolutionary cause of sociality and a key factor for exploring and understanding many aspects of how organisms are designed.
There are a great variety of cooperative behaviors in nature such as cooperative defence against predators (\cite{Garay}), cooperative breeding (\cite{CourchampBerec}), alarm calling (\cite{Lehmann}) and cooperative hunting (\cite{Boesch,Packer}).
The behaviour of cooperation during prey hunting has been observed in many different species, for instance, some species of tuna hunt in a linear school and aggregate when they encounter a school of prey (\cite{Partridge}) and
wolves can hunt animals bigger or faster than themselves by cooperative hunting (\cite{Schmidt}).
Foraging facilitation or hunting cooperation embraces a number of specific mechanisms such as
locating and capturing the prey in a bigger group (\cite{Cosner}), protecting any of members from predation (\cite{Krause})
and intraspecific cooperation (\cite{Courchamp}).
Recently, the foraging facilitation has been taken into consideration in some mathematical literatures (\cite{Berec,Cosner,Kimun,Pribylova,Alves,Saheb}).
Foraging facilitation can be depicted in mathematical models by functional response, which means the per capita feeding rate of predators on their prey. The independence of the Holling type II functional response in system (\ref{(00)}) from predator density is hardly always true in reality because it  reflects that any single predator affects the growth rate of prey independently of its conspecifics. Therefore,
functional response might depend on predator density and is increasing with respect to predator density for the case of foraging facilitation.
That is to say, when any of the foraging facilitation mechanisms operates, $E$ in Holling type II functional response no longer is a constant, but rather an increasing function of predator density.

Berec (\cite{Berec}) extended the classical Rosenzweig-MacArthur system by including foraging facilitation and proposed the following prey-predator system
\begin{eqnarray}
\begin{array}{l}
\frac{dN}{dt}=rN(1-\frac{N}{K})-\frac{E(P) NP}{1+HE(P) N},~~
\frac{dP}{dt}=e\frac{E(P) NP}{1+HE(P) N}-mP
\end{array}
\label{(01)}
\end{eqnarray}
with the encounter-driven functional response $E(P):=e_1/(e_2+P)^{\omega}$, where $e_1>0$, $e_2\geq0$ and $\omega\leq0$.
Clearly, the above model is exactly the Rosenzweig-MacArthur model as $\omega=0$, and it characterizes the
foraging facilitation as $\omega<0$.
Berec gave a brief overview on the number and stabilities of coexistence equilibria of system (\ref{(01)}), and later Pribylova and Peniaskova (\cite{Pribylova}) considered the bifurcation behaviors through qualitative analysis combined with numerical simulations. In the special case $e_2=0$ and $\omega=-1$, the functional response happens to be the one considered by Cosner (\cite{Cosner}), which actually describes the foraging facilitation in a spatially linear formation and aggregation when the predators encounter a cluster of prey. Kimun {\it et al} (\cite{Kimun}) analyzed system (\ref{(01)}) with the special functional response. Furthermore, Alves and Hilker (\cite{Alves}) investigated both of the two special cases $\omega=-1$, $e_2>0$ with $H=0$ and $H>0$ respectively and derived the result that the hunting cooperation in the prey-predator system induces Allee effects in predators.
In the case $\omega=-1$, $e_2>0$ and $H=0$,  they investigated the stabilities of equilibria and saddle-node, Hopf and Bogdanov-Takens bifurcations.
In the case $\omega=-1$, $e_2>0$ and $H>0$, by dimensionless transformations
$x=\frac{ee_1e_2}{m}N$, $y=\frac{e_1e_2}{m}P$, $\tau=mt$, $\sigma=\frac{r}{m}$, $k=\frac{ee_1e_2K}{m}$, $\alpha=\frac{ m}{e_1e_2^2}$ and $h=\frac{mH}{e}$
system (\ref{(01)}) can be written into
\begin{eqnarray}
\begin{array}{l}
\frac{dx}{d\tau}=x\{\sigma(1-\frac{x}{k})-\frac{(1+\alpha y)y}{1+h(1+\alpha y)x}\},~~~
\frac{dy}{d\tau}=y\{\frac{(1+\alpha y)x}{1+h(1+\alpha y)x}-1\},
\end{array}
\label{(2)}
\end{eqnarray}
where $\alpha$ describes the intensity of predator cooperation in hunting. System (\ref{(2)}) is a direct extension of the Rosenzweig-MacArthur model by considering the foraging facilitation.
Alves and Hilker (\cite{Alves})  presented a two-parameter bifurcation diagram of system (\ref{(2)}) for
special parameter values $h=0.1$ and $k=0.8$.
Therefore,  further carrying out a detailed study of system (\ref{(2)}) is the task of
this paper.

Note that system (\ref{(2)})  is orbitally equivalent to the following quartic system
\begin{eqnarray}
\begin{array}{l}
\frac{dx}{dt}=x\{\sigma(k-x)(1+h(1+\alpha y)x)-ky(1+\alpha y)\},~~
\frac{dy}{dt}=ky\{x(1+\alpha y)(1-h)-1\}.
\end{array}
\label{(3)}
\end{eqnarray}
In this paper,  we investigate the dynamics of
the above system with positive parameters $h$, $k$, $\sigma$ and $\alpha$  in the closure of the first
quadrant $\overline{\mathbb{R}_+^2}:=\{(x,y)\in\mathbb{R}^2:x\geq0, y\geq0\}$.
$\overline{\mathbb{R}_+^2}$ is positively invariant under the flow generated by system (\ref{(3)}). In fact, the origin $(0, 0)$ is an equilibrium, the positive $y$-axis is an orbital and the positive $x$-axis consists of three orbitals, i.e., $0<x<k$, $x>k$ and the equilibrium $(k, 0)$.
Notice that the abscissas of equilibria of the above system are decided by those positive roots of a cubic polynomial with complicated coefficients. However, generically we cannot obtain the analytic expressions of
those equilibria.
In Section 2,  we qualitatively analyse the cubic polynomial equilibrium function and investigate the relative positions of those roots for the equilibrium
function and the trace of the Jacobian matrix. Consequently, we obtain the parameter conditions for the exact number of equilibria and their qualitative properties.  Section 3 is devoted to equilibria with exact one zero eigenvalue. Restricting on
the center manifold, we obtain parameter conditions for transcritical, pitchfork and
saddle-node bifurcations. In Section 4, we apply the pseudo-division reduction (\cite{Winkler}) and
real-root isolation technique to determine the sign of the first quantity of focus, which
is a quartic polynomial with complex coefficients. It is proved that at most one limit
cycle bifurcates via Hopf bifurcation. In Section 5, we investigate the Bogdanov-Takens
bifurcation and show that it is codimension 2. Furthermore, the complete discrimination system of polynomial (\cite{YangLu}) is applied to verify the transversal condition.
In Section 6, we verify the results by numerical simulations and end the paper with a
brief biological implications.

\section{Equilibria and Their Properties}
In order to state our results conveniently, we consider the partition
$\mathbb{R}_+^3:=\{(k,\sigma,\alpha)\in\mathbb{R}^3:k>0, \sigma>0, \alpha>0\}=\mathcal{P}_1\cup\mathcal{S}_1\cup\mathcal{P}_2\cup\mathcal{S}_2\cup\mathcal{L}_1\cup\mathcal{S}_3\cup\mathcal{P}_3
\cup\mathcal{S}_4\cup\mathcal{P}_4\cup\mathcal{S}_5\cup\mathcal{P}_5$,
where
\begin{eqnarray*}
\left.
\begin{array}{l}
\mathcal{P}_1:=\{(k,\sigma,\alpha)\in\mathbb{R}_+^3: k>k_1, \alpha<\frac{1}{\sigma k}\},~
\mathcal{S}_1:=\{(k,\sigma,\alpha)\in\mathbb{R}_+^3: k=k_1, \alpha<\frac{1}{\sigma k}\},\\
\mathcal{P}_2:=\{(k,\sigma,\alpha)\in\mathbb{R}_+^3: k<k_1, \alpha<\frac{1}{\sigma k}\},~
\mathcal{S}_2:=\{(k,\sigma,\alpha)\in\mathbb{R}_+^3: k>k_1, \alpha=\frac{1}{\sigma k}\},\\
\mathcal{L}_1:=\{(k,\sigma,\alpha)\in\mathbb{R}_+^3: k=k_1, \alpha=\frac{1}{\sigma k}\},~
\mathcal{S}_3:=\{(k,\sigma,\alpha)\in\mathbb{R}_+^3: k<k_1, \alpha=\frac{1}{\sigma k}\},\\
\mathcal{P}_3:=\{(k,\sigma,\alpha)\in\mathbb{R}_+^3: k>k_1, \alpha>\frac{1}{\sigma k}\},~
\mathcal{S}_4:=\{(k,\sigma,\alpha)\in\mathbb{R}_+^3: k=k_1, \alpha>\frac{1}{\sigma k}\},\\
\mathcal{S}_5:=\{(k,\sigma,\alpha)\in\mathbb{R}_+^3: k<k_1, \alpha=\alpha_1\},~
\mathcal{P}_5:=\{(k,\sigma,\alpha)\in\mathbb{R}_+^3: k<k_1, \alpha>\alpha_1\},\\
\mathcal{P}_4:=\{(k,\sigma,\alpha)\in\mathbb{R}_+^3: k<k_1, \frac{1}{\sigma k}<\alpha<\alpha_1\}\\
\end{array}
\right.
\end{eqnarray*}
with $k_1:=1/(1-h)$ and
\begin{eqnarray}
\left.
\begin{array}{l}
\alpha_1:=\frac{-(h-1)^2k^2+18(h-1)k+27+(hk-k+9)\sqrt{(hk-k+1)(hk-k+9)}}{8\sigma(1-h)k^2}.
\end{array}
\right.
\label{(07)}
\end{eqnarray}
We further consider partitions
$\mathcal{P}_1=\mathcal{P}_{11}\cup\mathcal{P}_{12}\cup\mathcal{S}_{11}$,
$\mathcal{S}_2=\mathcal{S}_{21}\cup\mathcal{S}_{22}\cup\mathcal{L}_{21}$,
$\mathcal{P}_3=\mathcal{P}_{31}\cup\mathcal{P}_{32}\cup\mathcal{S}_{31}$,
$\mathcal{S}_4=\mathcal{S}_{41}\cup\mathcal{L}_{41}\cup\mathcal{S}_{42}$
and $\mathcal{P}_5=\mathcal{P}_{51}\cup\mathcal{P}_{52}\cup\mathcal{S}_{51}$,
where
\begin{eqnarray*}
\begin{array}{l}
\mathcal{P}_{11}:=\{(k,\sigma,\alpha)\in\mathbb{R}_+^3:k\geq k_2, \alpha<\frac{1}{\sigma k} ~\mbox{or}~ k_3\leq k<k_2, \alpha_2<\alpha<\frac{1}{\sigma k} ~\mbox{or}~ k_1<k<k_3, \sigma<\sigma_1, \alpha_2<\alpha<\frac{1}{\sigma k}\},\\
\mathcal{P}_{12}:=\{(k,\sigma,\alpha)\in\mathbb{R}_+^3:k_3\leq k<k_2, \alpha<\alpha_2 ~\mbox{or}~  k_1<k<k_3, \sigma\geq\sigma_1, \alpha<\frac{1}{\sigma k} ~\mbox{or}~ k_1<k<k_3, \sigma<\sigma_1, \\
\phantom{\mathcal{P}_{12}:=}\alpha<\alpha_2\},~
\mathcal{S}_{11}:=\{(k,\sigma,\alpha)\in\mathbb{R}_+^3:k_3\leq k<k_2, \alpha=\alpha_2 ~\mbox{or}~ k_1<k<k_3, \sigma<\sigma_1, \alpha=\alpha_2\},\\
\mathcal{S}_{21}:=\{(k,\sigma,\alpha)\in\mathbb{R}_+^3:k\geq k_3, \alpha=\frac{1}{\sigma k} ~\mbox{or}~ k_1<k<k_3, \sigma<\sigma_1, \alpha=\frac{1}{\sigma k}\},\\
\mathcal{S}_{22}:=\{(k,\sigma,\alpha)\in\mathbb{R}_+^3:k_1<k<k_3, \sigma>\sigma_1, \alpha=\frac{1}{\sigma k}\},
\mathcal{L}_{21}:=\{(k,\sigma,\alpha)\in\mathbb{R}_+^3:k_1<k<k_3, \sigma=\sigma_1, \alpha=\frac{1}{\sigma k}\},\\
\mathcal{P}_{31}:=\{(k,\sigma,\alpha)\in\mathbb{R}_+^3:k\geq k_3, \alpha>\frac{1}{\sigma k} ~\mbox{or}~ k_1<k<k_3, \sigma\geq\sigma_1, \alpha>\alpha_2   ~\mbox{or}~  k_1<k<k_3, \sigma<\sigma_1, \alpha>\frac{1}{\sigma k}\},\\
\mathcal{P}_{32}:=\{(k,\sigma,\alpha)\in\mathbb{R}_+^3:k_1<k<k_3, \sigma>\sigma_1, \frac{1}{\sigma k}<\alpha<\alpha_2\},\\
\mathcal{S}_{31}:=\{(k,\sigma,\alpha)\in\mathbb{R}_+^3:k_1<k<k_3, \sigma>\sigma_1, \alpha=\alpha_2\},
\mathcal{S}_{41}:=\{(k,\sigma,\alpha)\in\mathbb{R}_+^3:k=k_1, \frac{1}{\sigma k}<\alpha<\alpha_2\},\\
\mathcal{L}_{41}:=\{(k,\sigma,\alpha)\in\mathbb{R}_+^3:k=k_1, \alpha=\alpha_2\},~
\mathcal{S}_{42}:=\{(k,\sigma,\alpha)\in\mathbb{R}_+^3:k=k_1, \alpha>\alpha_2\},\\
\mathcal{P}_{51}:=\{(k,\sigma,\alpha)\in\mathbb{R}_+^3:k<k_1, \sigma\leq\sigma_2, \alpha>\alpha_1 ~\mbox{or}~ k<k_1, \sigma>\sigma_2, \alpha>\alpha_2\},\\
\mathcal{P}_{52}:=\{(k,\sigma,\alpha)\in\mathbb{R}_+^3:k<k_1, \sigma>\sigma_2, \alpha_1<\alpha<\alpha_2\},~
\mathcal{S}_{51}:=\{(k,\sigma,\alpha)\in\mathbb{R}_+^3:k<k_1, \sigma>\sigma_2, \alpha=\alpha_2\}
\end{array}
\end{eqnarray*}
with $k_2:=(1+h)/h(1-h)$, $k_3:=(h+1)^3/\{h(1-h)(h^2+3 h+1)\}$,
\begin{eqnarray}
\left.
\begin{array}{l}
\alpha_2:=\frac{\{kh(h-1)+h+1\}\{k(h-1)^2+h\sigma+\sigma\}^2}{k^2(1-h)(h\sigma+1-h)^2\{k(h-1)^2+h+\sigma-1\}},\\
\sigma_2:=\frac{(1-h)(h^2k-hk+3h+3)(hk-k+1)+(1-h)(h^2k-hk+h+1)\sqrt{(hk-k+9)(hk-k+1)}}{2\{h(-h^2+2h-1)k-h^2+h+2\}}
\end{array}
\right.
\label{(7)}
\end{eqnarray}
and $\sigma_1$ is the unique positive root of the following function
\begin{eqnarray}
\left.
\begin{array}{l}
f(\sigma):=\{hk(h-1)(h^2+3h+1)k+(h+1)^3\}\sigma^3+\{k(h-1)^2(h(3h+2)(h-1)k+3h^2+2h+2)\}\sigma^2\\
\phantom{f(\sigma):=}+\{k(h-1)^3(kh-k+1)(h^2k-hk-2h+1)\}\sigma+k(h-1)^4(hk-k+1)
\end{array}
\right.
\label{(8)}
\end{eqnarray}
as $0<h<1$ and $k_1<k<k_3$.  The following theorem is devoted to the number of equilibria of system (\ref{(3)}) and
their qualitative properties.
\begin{theorem}
System (\ref{(3)}) has at most four equilibria. The exact number and  qualitative properties of equilibria are described in
Table 1.
\label{thm1}
\end{theorem}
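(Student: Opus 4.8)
The plan is to locate every equilibrium explicitly, reduce the interior ones to the positive roots of a single cubic, read off the count from the shape of that cubic, and then classify each equilibrium through the trace and determinant of the Jacobian — the determinant being expressible in closed form in terms of the cubic itself. First I would determine the equilibria directly. The origin is always one; on the positive $x$-axis $\frac{dx}{dt}=0$ forces $\sigma(k-x)(1+hx)=0$, so $(k,0)$ is the only other equilibrium on the axes. For an interior equilibrium the second equation of (\ref{(3)}) gives $x(1+\alpha y)(1-h)=1$ (hence none exist when $h\ge 1$; assume $0<h<1$), equivalently $1+\alpha y=1/\{(1-h)x\}$ with $0<x<k_{1}$, so $y=\{1-(1-h)x\}/\{\alpha(1-h)x\}$. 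Feeding this into $\frac{dx}{dt}=0$ and using the identity $(1+\alpha y)x=1/(1-h)$ collapses the first equation to $y=\sigma x(k-x)/k$, which needs $0<x<k$; equating the two expressions for $y$ yields
\[
g(x):=\alpha\sigma(1-h)x^{3}-\alpha\sigma(1-h)kx^{2}-k(1-h)x+k=0,
\]
and interior equilibria correspond bijectively to the roots of $g$ in $(0,\min\{k,k_{1}\})$.

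Second, I would extract the count from the geometry of $g$. One checks $g(0)=k>0$, $g'(0)=-k(1-h)<0$, that $g''$ vanishes only at $x=k/3$, and hence that on $(0,\infty)$ the cubic strictly decreases to a unique minimizer $x_{m}$ and strictly increases afterwards; consequently $g$ has at most two positive roots, so system (\ref{(3)}) has at most four equilibria in total. Since $g'(k)>0\Leftrightarrow\alpha>1/(\sigma k)$ we have $x_{m}<k\Leftrightarrow\alpha>1/(\sigma k)$, while $g(k)=k(1-k/k_{1})$ and $g(k_{1})=\alpha\sigma(k_{1}-k)/(1-h)$. When $k>k_{1}$ one has $g(k_{1})<0<g(0)$, so there is exactly one root in $(0,k_{1})$ (the other positive root, if real, lies beyond $k_{1}$ and produces $y<0$), hence exactly one interior equilibrium. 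When $k<k_{1}$ both $g(0)$ and $g(k)$ are positive: for $\alpha\le 1/(\sigma k)$ the cubic is monotone on $(0,k)$ and there is no root, while for $\alpha>1/(\sigma k)$ the number of roots in $(0,k)$ is $0$, $1$ or $2$ according as $g(x_{m})>0$, $=0$ or $<0$, and eliminating $x_{m}$ from $g=g'=0$ reproduces exactly the threshold $\alpha_{1}$ of (\ref{(07)}). The case $k=k_{1}$ is the degenerate one: $g(k_{1})=0$, so the would-be interior root reaches $x=k$ and collides with $(k,0)$ — the transcritical situation treated in Section 3.

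Third, I would classify each equilibrium. The Jacobian at the origin is $\mathrm{diag}(\sigma k,-k)$, a saddle; at $(k,0)$ it is upper triangular with eigenvalues $-\sigma k(1+hk)<0$ and $k(1-h)(k-k_{1})$, so $(k,0)$ is a stable node for $k<k_{1}$, a saddle for $k>k_{1}$, and has a simple zero eigenvalue for $k=k_{1}$. At an interior equilibrium $(x^{*},y^{*})$, writing the two components of (\ref{(3)}) as $x\,B_{1}$ and $y\,B_{2}$ with $B_{1}=B_{2}=0$ there, the partial derivatives simplify through $(1+\alpha y)x=1/(1-h)$, and a short computation gives the key identity $\det J=-\,ky^{*}g'(x^{*})/(1-h)$. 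Hence the smaller root $x_{1}$ (on the decreasing branch of $g$) always gives an anti-saddle, the larger root $x_{2}$ (on the increasing branch) always gives a saddle, and $\alpha=\alpha_{1}$ (a double root) gives a saddle-node; in particular for $k>k_{1}$ the unique interior equilibrium, lying on the decreasing branch, is an anti-saddle. One also finds that $\mathrm{tr}\,J$ at $(x^{*},y^{*})$ is an affine, strictly decreasing function of $x^{*}$ vanishing precisely at $x_{T}:=k(\sigma h+1-h)/\{\sigma(1+h)+k(1-h)^{2}\}$, so the anti-saddle is asymptotically stable for $x^{*}>x_{T}$, unstable for $x^{*}<x_{T}$, and a (possibly weak) focus or center for $x^{*}=x_{T}$. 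Imposing $g(x_{T})=0$ and solving for $\alpha$ returns $\alpha_{2}$ of (\ref{(7)}); the coincidences $x_{T}=k_{1}$, $\alpha_{2}=1/(\sigma k)$, and the merging of the saddle-node locus with the trace-zero locus produce the remaining thresholds $k_{2},k_{3},\sigma_{2}$ and the cubic $f$ of (\ref{(8)}) whose positive root is $\sigma_{1}$. Running through the cells of the partition of $\mathbb{R}_{+}^{3}$ with these rules then fills in Table 1.

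The main obstacle is this last step, not the count. Deciding, in each cell, which side of $x_{T}$ the relevant root $x^{*}$ of $g$ falls on means comparing two algebraic curves without a closed form for $x^{*}$; this is exactly where the pseudo-division reduction eliminates $x^{*}$ between $g(x^{*})=0$ and each threshold condition, and where real-root isolation — together with a complete discrimination system for the ordering of $k_{1},k_{2},k_{3}$ and for the sign and uniqueness of the positive root of $f$ — is needed to fix the signs of the resulting high-degree polynomials in $(h,k,\sigma,\alpha)$. Correctly locating all the non-hyperbolic boundary strata $\mathcal{S}_{\bullet}$ and $\mathcal{L}_{\bullet}$ in Table 1, and verifying that the formulas (\ref{(07)}), (\ref{(7)}), (\ref{(8)}) are mutually consistent on the indicated parameter ranges (positivity of $\alpha_{1},\alpha_{2},\sigma_{1},\sigma_{2}$ and the stated inequalities among the $k_{i}$), is the delicate bookkeeping that the symbolic machinery is designed to carry out.
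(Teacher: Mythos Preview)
Your plan is essentially the paper's own argument, carried out with the opposite sign convention: your cubic $g$ is exactly $-F$ in (\ref{(6)}), your minimizer $x_m$ is the paper's $x_*$ in (\ref{(10)}), your threshold $x_T$ is the paper's $x_0$ in (\ref{(013)}), and your determinant identity $\det J=-ky^{*}g'(x^{*})/(1-h)$ is the simplified form of the paper's $D|_{E_i}$ in (\ref{(13)}) once one uses $(1+\alpha y_i)x_i=1/(1-h)$. The paper likewise reduces the trace to a linear function of $x_i$ via pseudo-division by $F$, and its Lemma~\ref{lem1} is precisely the ``which side of $x_T$ does $x_1$ lie on'' bookkeeping you flag as the main obstacle.

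Two small points of care, neither fatal to your strategy. First, when $k=k_1$ and $\alpha>1/(\sigma k)$ (the strata $\mathcal{S}_{41}\cup\mathcal{L}_{41}\cup\mathcal{S}_{42}$) the root at $x=k$ is not the only one: since $g(0)>0$, $g(k)=0$ and $x_m<k$, there is a second root $x_1\in(0,x_m)$, so the system has \emph{three} equilibria there, not two; your sentence about ``the would-be interior root reaching $x=k$'' describes only $\mathcal{S}_1\cup\mathcal{L}_1$. Second, your remark that for $k>k_1$ the second positive root ``produces $y<0$'' is imprecise: a root of $g$ in $(k_1,k)$ is actually impossible (the two expressions for $y$ would have opposite signs), so the second root, if real, must lie beyond $k$. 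Both points are handled automatically once you run the case analysis cell by cell, exactly as the paper does.
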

\begin{table}[h]
\tbl{Qualitative properties for various parameters.}
{\begin{tabular}{l l l l}\\[-2pt]
\toprule
$h$& $(k,\sigma,\alpha)$& Number &Equilibrium \\[6pt]
\hline\\[-2pt]
{}&$\mathcal{P}_{11}\cup\mathcal{S}_{21}\cup\mathcal{P}_{31}$& $3$ &$E_0$(saddle)  $E_k$(saddle) $E_1$(unstable focus or node)\\[1pt]
{} & $\mathcal{S}_{11}\cup\mathcal{L}_{21}\cup\mathcal{S}_{31}$
& $3$ &$E_0$(saddle)  $E_k$(saddle) $E_1$(center type)\\[1pt]
{}& $\mathcal{P}_{12}\cup\mathcal{S}_{22}\cup\mathcal{P}_{32}$
&$3$&$E_0$(saddle)  $E_k$(saddle) $E_1$(stable focus or node)\\[1pt]
{} &$\mathcal{S}_1\cup\mathcal{L}_1$
&$2$&$E_0$(saddle)  $E_k$(degenerate)\\[1pt]
$(0, 1)$ & $\mathcal{P}_2\cup\mathcal{S}_3\cup\mathcal{P}_{4}$
&$2$&$E_0$(saddle)  $E_k$(stable node)\\[1pt]
{} &$\mathcal{S}_{41}$
&$3$&$E_0$(saddle)  $E_k$(degenerate) $E_1$(stable focus or node)\\[1pt]
{} &$\mathcal{L}_{41}$
&$3$&$E_0$(saddle)  $E_k$(degenerate)  $E_1$(center type)\\[1pt]
{} &$\mathcal{S}_{42}$
&$3$&$E_0$(saddle)  $E_k$(degenerate)  $E_1$(unstable focus or node)\\[1pt]
{} &$\mathcal{P}_{51}$
&$4$&$E_0$(saddle)  $E_k$(stable node)  $E_1$(unstable focus or node) $E_2$(saddle)\\[1pt]
{} &$\mathcal{S}_{51}$
&$4$&$E_0$(saddle)  $E_k$(stable node)  $E_1$(center type) $E_2$(saddle)\\[1pt]
{} &$\mathcal{P}_{52}$
&$4$&$E_0$(saddle)  $E_k$(stable node)  $E_1$(stable focus or node) $E_2$(saddle)\\[1pt]
{} &$\mathcal{S}_{5}$
&$3$&$E_0$(saddle)  $E_k$(stable node)  $E_*$(degenerate)\\[1pt]
\hline\\[-2pt]
$[1,+\infty)$&$\mathbb{R}_+^3$&2&$E_0$(saddle)  $E_k$(stable node)\\[1pt]
\botrule
\end{tabular}}
\end{table}
\begin{proof}
Equilibria of system (\ref{(3)}) are determined by the algebraic equations
\begin{eqnarray}
\begin{array}{l}
x\{\sigma(k-x)(1+h(1+\alpha y)x)-ky(1+\alpha y)\}=0,~~
ky\{x(1+\alpha y)(1-h)-1\}=0.
\end{array}
\label{(4)}
\end{eqnarray}
For $y=0$, we can find two equilibria $E_0:(0, 0)$ and $E_k:(k, 0)$. For $y>0$, from the second equation in (\ref{(4)}) system (\ref{(3)}) has no other equilibrium if $h\geq1$. If $h<1$, substituting equality $1+hx(1+\alpha y)=x(1+\alpha y)$ into the first equation in (\ref{(4)}), we conclude that all equilibria lie on the curve
\begin{eqnarray}
\left.
\begin{array}{l}
y=\frac{\sigma x(k-x)}{k},~~ 0<x<k.
\end{array}
\right.
\label{(5)}
\end{eqnarray}
Substituting (\ref{(5)}) into the second equation in (\ref{(4)}), we
obtain
\begin{eqnarray}
\left.
\begin{array}{l}
F(x):=\alpha\sigma(h-1)x^3+\alpha\sigma k(1-h)x^2+k(1-h)x-k,
\end{array}
\right.
\label{(6)}
\end{eqnarray}
whose zeros in the interval $(0,k)$ determine all equilibria of system (\ref{(3)}).
The derivative of $F(x)$ is
$F'(x)=(1-h)(-3\alpha\sigma x^2+2\alpha\sigma kx+k)$,
which has a unique positive root
\begin{eqnarray}
\left.
\begin{array}{l}
x_*:=\frac{k\alpha\sigma+\sqrt{k\alpha\sigma(k\alpha\sigma+3)}}{3\alpha\sigma}.
\end{array}
\right.
\label{(10)}
\end{eqnarray}
It is easily seen that $F(x)$ is monotonically increasing for $0<x<x_*$ and monotonically decreasing for $x>x_*$.
We need to discuss the zeros of $F(x)$ in the interval $(0,k)$ for $0<h<1$ in two cases: $x_*\geq k$ and $x_*< k$.
(I). For the case $x_*\geq k$, i.e., $\alpha\leq\frac{1}{\sigma k}$, the discussion is divided into the following two subcases. (I.1) If $F(k)>0$, i.e., $k>k_1$, then $F(x)=0$ has a unique root in the interval $(0, k)$ denoted by $x_1$ (see Fig. \ref{Figure 1} (a)). The corresponding parameters  $(k, \sigma, \alpha)$ locate in $\mathcal{P}_1\cup\mathcal{S}_2$. (I.2) If $F(k)\leq0$, i.e., $k\leq k_1$, then $F(x)=0$ has no root in the interval $(0, k)$. The corresponding parameters $(k, \sigma, \alpha)$ locate in $\mathcal{S}_1\cup\mathcal{P}_2\cup\mathcal{L}_1\cup\mathcal{S}_3$.
(II). For the case $x_*< k$, i.e., $\alpha>\frac{1}{\sigma k}$, we need to discuss in the following two subcases. (II.1) If $F(k)\geq0$, i.e., $k\geq k_1$, then $F(x)=0$ has a unique root in the interval $(0, k)$ denoted by $x_1$ (see Fig. \ref{Figure 1} (a)). The corresponding parameters $(k, \sigma, \alpha)$ locate in  $\mathcal{P}_3\cup\mathcal{S}_4$. (II.2) If $F(k)<0$, i.e., $k<k_1$, it should be clear that we need only
account for the sign of $F(x_*)$ to determine the number of zeros of $F(x)$.
Since $F'(x_*)=0$, we can use Maple command ``prem"  to get the pseudo-remainder of $F(x)$ divided by $F'(x)$ at $x_*$, i.e.,
$$prem(F(x), F'(x), x,`m\mbox{'})=k\alpha^2\sigma^2(1-h)^2\{2(k\alpha\sigma+3)(1-h)x+k(1-h)-9\},$$
where $m=9\alpha^2\sigma^2(h-1)^2$.
Thus, at $x_*$ we have $F(x)=\frac{k}{9}\tilde{F}(x)$  with
\begin{eqnarray}
\left.
\begin{array}{l}
\tilde{F}(x):=2(k\alpha\sigma+3)(1-h)x+k(1-h)-9.
\end{array}
\right.
\label{(0010)}
\end{eqnarray}
Substituting $x_*$ given by (\ref{(10)}) into $\tilde{F}(x)$ leads to
\begin{eqnarray}
\left.
\begin{array}{l}
\tilde{F}(x_*)=\frac{2(k\alpha\sigma+3)(1-h)\sqrt{k\alpha\sigma(k\alpha\sigma+3)}+\alpha\sigma\{2k^2\sigma\alpha(1-h)+9k(1-h)-27\}}{3\alpha\sigma},
\end{array}
\right.
\label{(010)}
\end{eqnarray}
in which the sign of $2(k\alpha\sigma+3)(1-h)\sqrt{k\alpha\sigma(k\alpha\sigma+3)}$ is positive but that of $2k^2\sigma\alpha(1-h)+9k(1-h)-27$ is indeterminate. If $2k^2\sigma\alpha(1-h)+9k(1-h)-27\geq0$, i.e., $\alpha\geq\frac{27-9k(1-h)}{2k^2\sigma(1-h)}$, then  $\tilde{F}(x_*)$ is positive.
If $2k^2\sigma\alpha(1-h)+9k(1-h)-27<0$, i.e., $\frac{1}{\sigma k}<\alpha<\frac{27-9k(1-h)}{2k^2\sigma(1-h)}$, then
the sign of $\tilde{F}(x_*)$  is same as that of
\begin{eqnarray}
\left.
\begin{array}{l}
F_1(\alpha):=4k^2\sigma^2(1-h)\alpha^2+\sigma\{(1-h)^2k^2+18(1-h)k-27\}\alpha+4k(1-h)^2,
\end{array}
\right.
\label{(11)}
\end{eqnarray}
which is deduced from that $2(\alpha k\sigma+3)(1-h)\sqrt{k\alpha\sigma(\alpha k\sigma+3)}$ square minus $\alpha \sigma \{2k^2\sigma\alpha(1-h)+9k(1-h)-27\}$ square.
Since the leading coefficient of $F_1(\alpha)$ and $F_1(\frac{27-9k(1-h)}{2k^2\sigma(1-h)})=\frac{(hk-k+9)^3}{2k^2(1-h)}$
are positive and $F_1(\frac{1}{\sigma k})=\frac{(hk-k+1)(5hk-5k-27)}{k}$ is negative under the conditions $0<h<1$ and $0<k<k_1$, $F_1(\alpha)$ has one root $\alpha_1$ given in (\ref{(07)}) in the interval $(\frac{1}{\sigma k}, \frac{27-9k(1-h)}{2k^2\sigma(1-h)})$.
Hence, we can immediately obtain that $\tilde{F}(x_*)<0$ if $\frac{1}{\sigma k}<\alpha<\alpha_1$,  $\tilde{F}(x_*)>0$ if $\alpha>\alpha_1$ and $\tilde{F}(x_*)=0$ if $\alpha=\alpha_1$.
Accordingly, the distribution of roots of $F(x)$ in the interval $(0, k)$ is displayed as follows.
$F(x)$ has no root in the interval $(0, k)$ if $(k, \sigma, \alpha)\in\mathcal{P}_{4}$;
$F(x)$ has two roots in the interval $(0, k)$ denoted by $x_1$, $x_2$ and $x_1<x_2$ if $(k, \sigma, \alpha)\in\mathcal{P}_{5}$ (see Fig. \ref{Figure 1} (a));
$F(x)$ has one multiple root $x_*$ in the interval $(0, k)$ if $(k, \sigma, \alpha)\in\mathcal{S}_{5}$ (see Fig. \ref{Figure 1} (b)). Furthermore, $x_*$ also can be expressed as
\begin{eqnarray}
\left.
\begin{array}{l}
x_*=\frac{9-k(1-h)}{2(k\alpha_1\sigma+3)(1-h)}
\end{array}
\right.
\label{(012)}
\end{eqnarray}
if $\alpha=\alpha_1$ because $\tilde{F}(x_*)=0$ in (\ref{(0010)}).
Corresponding to the roots of $F(x)$ in the interval $(0, k)$, the positive equilibria of system (\ref{(3)}) are $E_1:(x_1, y_1)$, $E_2:(x_2, y_2)$ or $E_*:(x_*, y_*)$, where $y_i=\frac{\sigma x_i(k-x_i)}{k}$, $i=1, 2, *$.
From the above discussion we obtain the number of equilibria of system (\ref{(3)}) as shown in Table 1.
\begin{figure}[h]
\centering\subfigure[]{\label{fig:subfig:a}
\includegraphics[width=5.5cm,height=5.5cm]{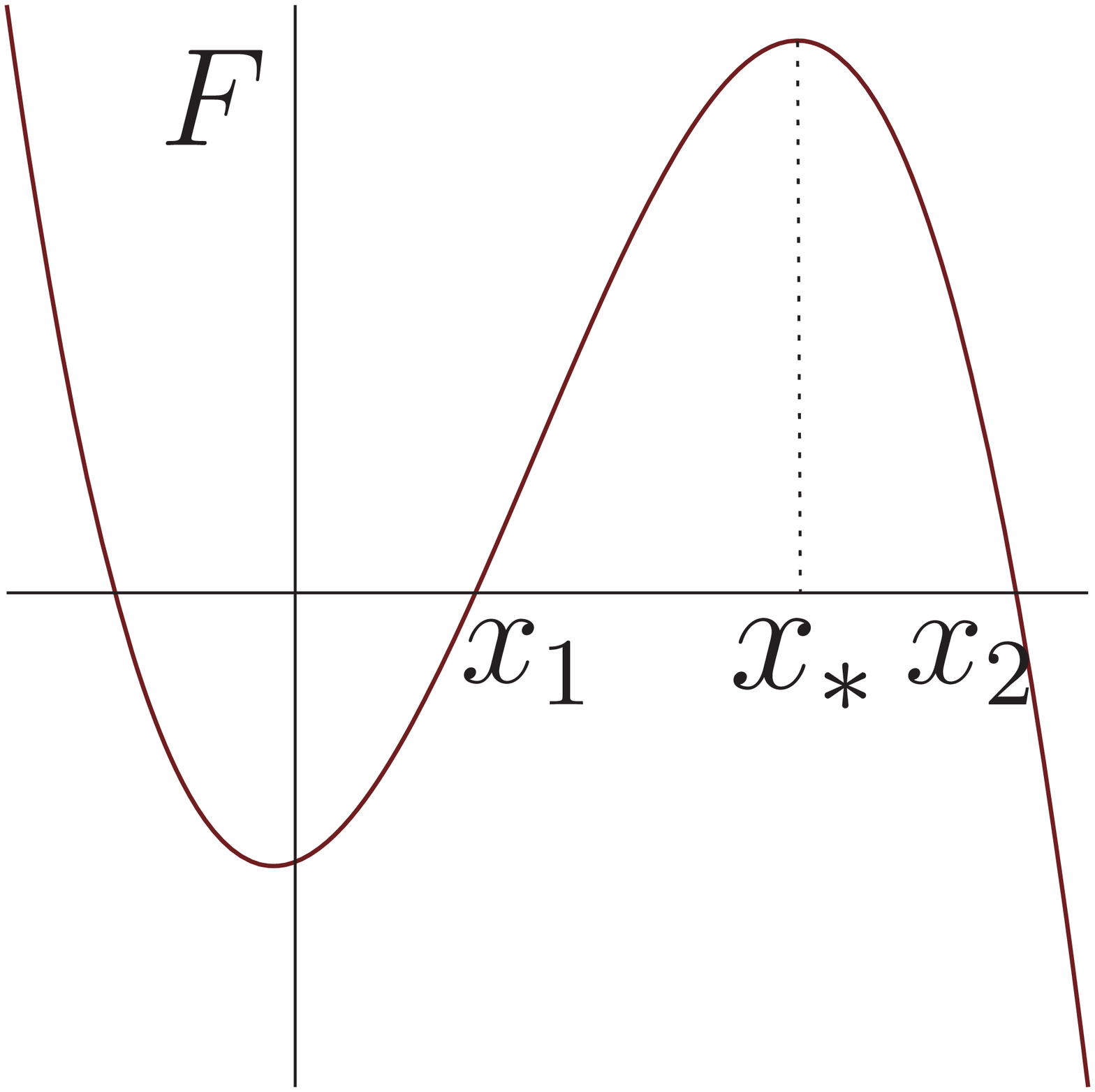}}~~~
\subfigure[]{\label{fig:subfig:b}
\includegraphics[width=5.5cm,height=5.5cm]{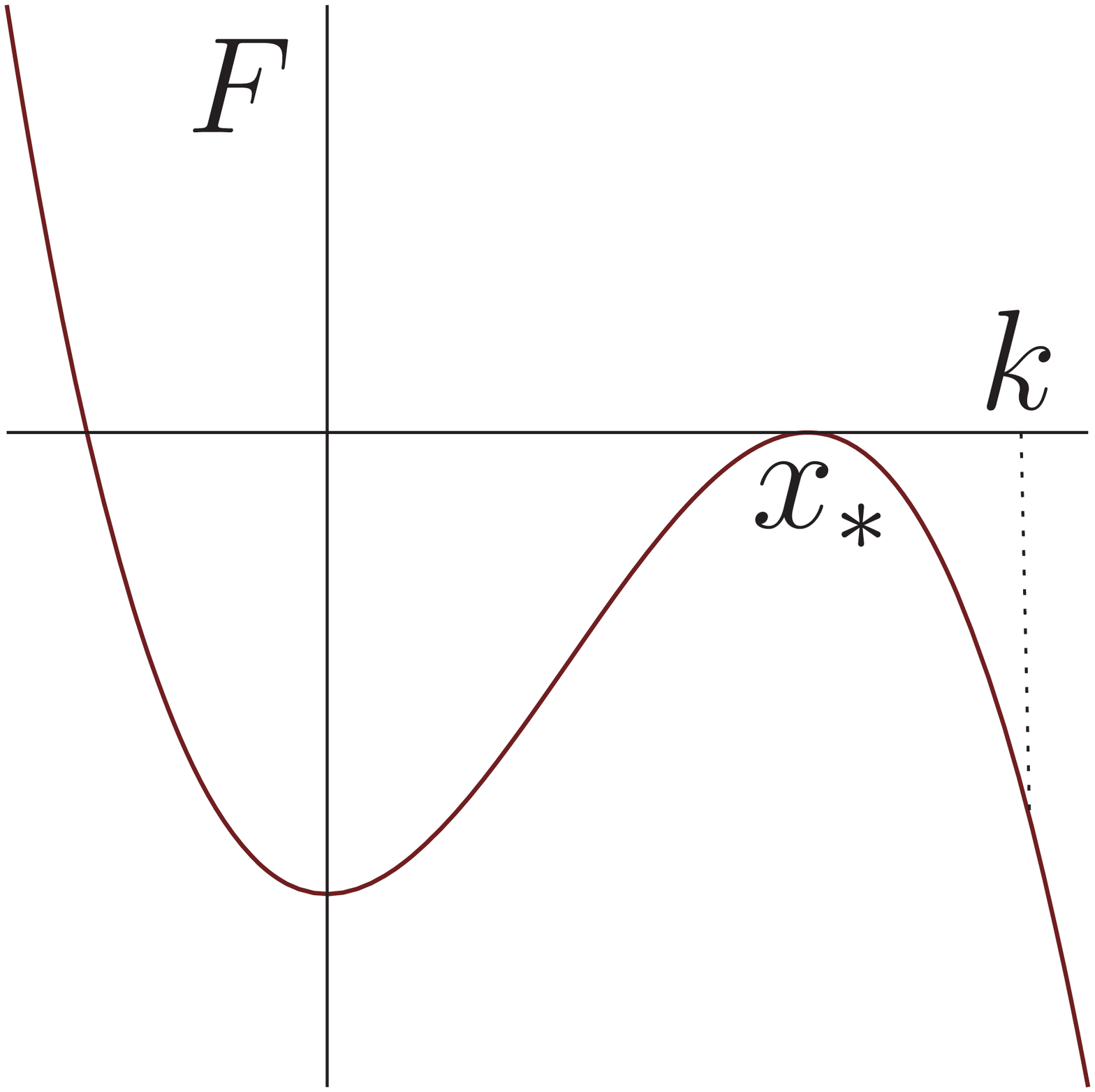}}
\caption{Graphs of the equilibrium equation $F(x)$ when the positive equilibria exist. (a): $F(x)$ has a root $x_1$ for $0<x<k$ when $F(k)>0$ or $k>x_{*}$ and $F(k)=0$; $F(x)$ has two roots $x_1$ and $x_2$ for $0<x<k$ when $k>x_{*}$, $F(k)<0$ and $F(x_{*})>0$.
(b): $F(x)$ has a multiple root $x_{*}$ for $0<x<k$ when $k>x_{*}$, $F(k)<0$ and $F(x_{*})=0$.}
\label{Figure 1}
\end{figure}

In what follows, we study the dynamical behaviors of equilibria. Compute the Jacobian matrix of vector
field (\ref{(3)})
$$
J:=
\left(\begin{array}{lr}J_{11}& x\{(hx(k-x)\sigma-2ky)\alpha-k\}\\
-ky(h-1)(\alpha y+1)  & -k\{x(h-1)(2\alpha y+1)+1\}
\end{array}\right),
$$
where $J_{11}:=\sigma(k-2x)\{1+h(\alpha y+1)x\}+\{h\sigma x(k-x)-yk\}(\alpha y+1)$
and let $T$ and $D$ denote its trace and determinant respectively.  $E_0$ is a saddle because of
$D|_{E_0}=-\sigma k^2<0$. At $E_k$,
$D|_{E_k}=k^2\sigma(hk+1)(hk-k+1)$, $T|_{E_k}=-k\{(\sigma+1)(hk+1)-k\}$,
$T|_{E_k}-4D|_{E_k}=k^2\{(\sigma-1)(hk+1)+k\}^2$.
When $h\geq1$, $D|_{E_k}>0$, $T|_{E_k}<0$ and $T|_{E_k}-4D|_{E_k}>0$, implying that $E_k$ is a stable node.
When $h<1$, the qualitative properties of $E_k$ are displayed as follows.
$D|_{E_k}<0$ if $k>k_1$, implying that $E_k$ is a saddle,
$D|_{E_k}>0$, $T|_{E_k}<0$ and $T|_{E_k}-4D|_{E_k}>0$ if $k<k_1$, implying that $E_k$ is a stable node,
$T|_{E_k}<0$ and $D|_{E_k}=0$ if  $k=k_1$, implying that $E_k$ is  degenerate.
 At $E_i$, $i=1, 2, *$, we obtain determinant $D|_{E_i}$ and trace $T|_{E_i}$ of Jacobian matrix $J$ as follows
\begin{eqnarray}
\left.
\begin{array}{l}
D|_{E_i}=\frac{\sigma(k-x_i)\{\alpha \sigma x_i(k-x_i)+k\}x_i^2}{k}F'(x_i),~~
T|_{E_i}=\frac{(-h^2k+2hk-h\sigma-k-\sigma)x_i+k(h\sigma-h+1)}{1-h}.
\end{array}
\right.
\label{(13)}
\end{eqnarray}
To obtain the afore-given expressions $D|_{E_i}$ and $T|_{E_i}$, we have used the branch  $1+hx_i(1+\alpha y_i)=x_i(1+\alpha y_i)$ and the expression of $y_i$.
Furthermore, $T|_{E_i}$
is the pseudo-remainder of trace $\tilde{T}|_{E_i}$ of Jacobian matrix $J$ at $E_i$ divided by $F(x_i)$, where
$\tilde{T}|_{E_i}:=\sigma x_i\{\alpha\sigma(h+1)x_i^3-\alpha k(2h\sigma-h+\sigma+1)x_i^2+k(\alpha hk\sigma-\alpha hk+\alpha k-h-1)x_i+hk^2\}/k$. Using the MAPLE command ``prem", we can simplify trace $\tilde{T}|_{E_i}$ as
$T|_{E_i}$ since $F(x_i)=0$.
The above discussion of the existence of equilibria shows that $F'(x_1)>0$, $F'(x_*)=0$ and $F'(x_2)<0$.
Thus, we obtain $D|_{E_1}>0$, $D|_{E_*}=0$ and $D|_{E_2}<0$, which imply that $E_*$ is degenerate, $E_2$ is a saddle and
$E_1$ can be neither a saddle nor a degenerate equilibrium. We only need to discuss
the sign of $T|_{E_1}$ in the following lemma.
\begin{lemma}
For $0<h<1$,
$T|_{E_1}>0$ if $(k, \sigma, \alpha)\in\mathcal{P}_{11}\cup\mathcal{S}_{21}\cup\mathcal{P}_{31}\cup\mathcal{S}_{42}\cup\mathcal{P}_{51}$,  $T|_{E_1}<0$ if $(k, \sigma, \alpha)\in\mathcal{P}_{12}\cup\mathcal{S}_{22}\cup\mathcal{P}_{32}\cup\mathcal{S}_{41}\cup\mathcal{P}_{52}$ and  $T|_{E_1}=0$ if $(k, \sigma, \alpha)\in\mathcal{S}_{11}\cup\mathcal{L}_{21}\cup\mathcal{S}_{31}\cup\mathcal{L}_{41}\cup\mathcal{S}_{51}$.
\label{lem1}
\end{lemma}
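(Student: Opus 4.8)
\noindent\textit{Sketch of proof.}
The plan is to reduce the sign of $T|_{E_1}$ to a comparison between the implicitly defined abscissa $x_1$ and an explicit quantity. Since $0<h<1$, the factor $1-h$ in the expression for $T|_{E_i}$ in (\ref{(13)}) is positive, so $\mathrm{sgn}(T|_{E_1})$ equals the sign of the affine function
\[
N(x):=\left\{-(h-1)^2k-(h+1)\sigma\right\}x+k(h\sigma+1-h)
\]
at $x=x_1$. Its leading coefficient is negative while $N(0)=k(h\sigma+1-h)>0$, so $N$ has a unique zero $\bar x:=k(h\sigma+1-h)/\{(h-1)^2k+(h+1)\sigma\}>0$ and $\mathrm{sgn}(T|_{E_1})=\mathrm{sgn}(\bar x-x_1)$. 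Recalling from the analysis preceding the lemma that $F$ is strictly increasing on $(0,x_*)$ and strictly decreasing afterwards, and that in every stratum appearing in the statement $x_1$ is the unique zero of $F$ in $(0,x_*)$ (so $0<x_1<x_*$), I would split: if $\bar x\ge x_*$ then $x_1<x_*\le\bar x$ and $T|_{E_1}>0$ ``for free''; if $0<\bar x<x_*$ then $\bar x$ and $x_1$ both lie in the interval of increase of $F$, so $\mathrm{sgn}(\bar x-x_1)=\mathrm{sgn}(F(\bar x)-F(x_1))=\mathrm{sgn}(F(\bar x))$. Since $F'>0$ precisely on $(0,x_*)$, the case distinction itself is decided by the sign of $F'(\bar x)$.

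The next step is to turn these into polynomial sign conditions. Putting $P:=k(h\sigma+1-h)>0$ and $Q:=k(h-1)^2+(h+1)\sigma>0$, so that $\bar x=P/Q$, direct substitution gives
\[
F'(\bar x)\,Q^2=(1-h)\left\{\alpha\sigma P(2kQ-3P)+kQ^2\right\},\qquad F(\bar x)\,Q^3=(1-h)\left\{\alpha\sigma P^2(kQ-P)+kPQ^2\right\}-kQ^3,
\]
and, using the identities $kQ-P=k\{k(h-1)^2+h+\sigma-1\}$ and $Q-(1-h)P=\sigma\{kh(h-1)+h+1\}$, the second one collapses to the factored affine function of $\alpha$
\[
F(\bar x)\,Q^3=(1-h)\sigma P^2(kQ-P)\,(\alpha-\alpha_2),
\]
where $\alpha_2$ is exactly the quantity of (\ref{(7)}). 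Hence on $\{\bar x<x_*\}$ one has $\mathrm{sgn}(T|_{E_1})=\mathrm{sgn}\!\left(\{k(h-1)^2+h+\sigma-1\}(\alpha-\alpha_2)\right)$, while on $\{\bar x\ge x_*\}=\{F'(\bar x)\le 0\}$ one has $T|_{E_1}>0$.

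The partition thresholds now appear naturally: for $k>k_1$ the factor $k(h-1)^2+h+\sigma-1$ is automatically positive; $k_2$ is the value at which $kh(h-1)+h+1$ vanishes, so that $\alpha_2<0$ for $k>k_2$ and the condition $\alpha>\alpha_2$ becomes vacuous; $\sigma_1$ is precisely the positive root of the cubic $f$ of (\ref{(8)}) because clearing denominators in the equation $\alpha_2=1/(\sigma k)$ produces $f(\sigma)=0$; $\sigma_2$ is the value at which $\alpha_1=\alpha_2$; and $k_3$ marks the transition of the ordering of these curves as functions of $\sigma$ for fixed $k$. One then runs the case split --- on the sign of $k(h-1)^2+h+\sigma-1$; on $k<k_1$ versus $k>k_1$ (which fixes the sign of $F(k)$); on $\alpha<1/(\sigma k)$ versus $\alpha>1/(\sigma k)$ (which fixes whether $x_*<k$, hence whether $E_2$ exists alongside $E_1$); and, inside $\{\bar x<x_*\}$, on $\alpha$ relative to $\alpha_1$ and $\alpha_2$ --- and this reproduces the strata $\mathcal P_{11},\dots,\mathcal S_{51}$, with $T|_{E_1}=0$ holding exactly on the boundary strata, where either $\alpha=\alpha_2$ or $\bar x=x_*$.

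The main obstacle is this bookkeeping: there is no single clean inequality, and one must verify that the sign of $F'(\bar x)$ --- that is, whether the ``free'' case $\bar x\ge x_*$ occurs --- combines with the position of $x_*$ relative to $k$ and with the ordering of $\alpha$ against $\alpha_1,\alpha_2$ to partition $\mathbb R_+^3$ in exactly the prescribed way. In particular the ``free'' case must be the one operating on the small-$\sigma$ part of $\mathcal P_{51}$, where $k(h-1)^2+h+\sigma-1<0$ so that $\alpha_2<0$ and the rule valid on $\{\bar x<x_*\}$ would give the wrong sign; and checking that the various algebraic comparisons reduce exactly to the stated constants $k_1,k_2,k_3,\sigma_1,\sigma_2$ is where the pseudo-division and real-root-isolation machinery already invoked in the paper is needed, every individual step remaining a semi-algebraic sign decision.
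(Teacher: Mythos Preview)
Your approach is essentially the paper's own: both introduce the zero of the linear trace (the paper calls it $x_0$, you call it $\bar x$), evaluate the cubic $F$ there, obtain the factored form proportional to $\{k(h-1)^2+h+\sigma-1\}(\alpha-\alpha_2)$, and then infer $\mathrm{sgn}(T|_{E_1})$ from the relative position of this point and $x_1$ via the shape of $F$. The paper organises the casework by first separating the one-root strata $\mathcal P_1\cup\mathcal S_2\cup\mathcal P_3\cup\mathcal S_4$ (where it checks $x_0<k$ and reads off the sign of $F(x_0)$ directly) from the two-root stratum $\mathcal P_5$ (where it additionally compares $x_0$ with $x_*$), whereas you use the single dichotomy $\bar x<x_*$ versus $\bar x\ge x_*$ throughout --- a cosmetic difference, since $x_1<x_*$ in every stratum under consideration. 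One small correction: $\bar x=x_*$ never yields $T|_{E_1}=0$, because $x_1<x_*$ strictly; the trace vanishes only when $\bar x<x_*$ and $F(\bar x)=0$, i.e.\ when $\alpha=\alpha_2$ (with $kQ-P>0$), and that is indeed the defining relation for all five boundary strata $\mathcal S_{11},\mathcal L_{21},\mathcal S_{31},\mathcal L_{41},\mathcal S_{51}$.
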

{\bf Proof of Lemma~\ref{lem1}: }
The above discussion shows that the equilibrium $E_1$ exists for $0<h<1$ and $(k, \sigma, \alpha)\in \mathcal{P}_1\cup\mathcal{S}_2\cup\mathcal{P}_3\cup\mathcal{S}_4\cup\mathcal{P}_5$.
Determining the sign of $T|_{E_1}$ is a difficulty because the explicit solution $x_1$ can not be obtained from equilibrium equation (\ref{(6)}), which is a cubic equation. In order to overcome it, we need to discuss the sign of $T|_{E_1}$ indirectly via the relative position of the roots of equilibrium equation (\ref{(6)}) and $T|_{E_1}$ together with the monotonicity of $T|_{E_1}$.
Function $T|_{E_1}$ is monotonically decreasing and has one positive root.
Let the root of $T|_{E_1}=0$ be
\begin{eqnarray}
\left.
\begin{array}{l}
x_0:={\frac {k \left( h\sigma-h+1 \right) }{k \left( h-1 \right) ^{2}+\sigma\, \left( h+1 \right)}}.
\end{array}
\right.
\label{(013)}
\end{eqnarray}
Substituting $x_0$ into $F(x)$, we get
\begin{eqnarray*}
\left.
\begin{array}{l}
F(x_0)=\frac{k\sigma\{k^2(1-h)(h\sigma-h+1)^2(k(h-1)^2+h+\sigma-1)\alpha+(-h^2k+hk-h-1)(h^2k-2hk+h\sigma+k+\sigma)^2\}}{\{k(h-1)^2+\sigma(h+1)\}^3}.
\end{array}
\right.
\end{eqnarray*}
The concrete strategy is described as follows. For $0<h<1$ and $(k, \sigma, \alpha)\in \mathcal{P}_1\cup\mathcal{S}_2\cup\mathcal{P}_3\cup\mathcal{S}_4$, $F(x)$ has a unique root $x_1$ in the interval $(0, k)$ as well as $F(x)<0$ in the interval $(0, x_1)$ and $F(x)>0$ in the interval $(x_1, k)$.
The relative position of $x_1$ and $x_0$ is determined by the sign of $F(x_0)$ together with relationship $x_0<k$.
Concretely, $x_1>x_0$ if $F(x_0)<0$, $x_1<x_0$ if $F(x_0)>0$ and $x_1=x_0$ if $F(x_0)=0$
(see Fig. \ref{Figure 1} (a)). In addition that $T|_{E_1}$ is monotonically decreasing, then $T|_{E_1}<0$ if $F(x_0)<0$, $T|_{E_1}>0$ if $F(x_0)>0$ and $T|_{E_1}=0$ if $F(x_0)=0$.
For $0<h<1$ and $(k, \sigma, \alpha)\in \mathcal{P}_5$, $F(x)$ has two roots $x_1$ and $x_2$ ($x_1<x_*<x_2$) as well as $F(x)<0$ in the intervals $(0, x_1)\cup(x_2, k)$ and $F(x)>0$ in the interval $(x_1, x_2)$.
The relative position of $x_1$ and $x_0$ is determined by the sign of $F(x_0)$ together with relative position of $x_0$ and $x_*$.
Concretely, $x_1>x_0$ if $F(x_0)<0$ and $x_0<x_*$; $x_1<x_0$ if $F(x_0)>0$ or  $F(x_0)\leq0$ and $x_0>x_*$; $x_1=x_0$ if $F(x_0)=0$ and $x_0<x_*$
(see Fig. \ref{Figure 1} (a)).  In addition that $T|_{E_1}$ is monotonically decreasing, then
$T|_{E_1}<0$ if $F(x_0)<0$ and $x_0<x_*$; $T|_{E_1}>0$ if $F(x_0)>0$ or $F(x_0)\leq0$ and $x_0>x_*$; $T|_{E_1}=0$ if $F(x_0)=0$ and $x_0<x_*$.
Thus, to obtain the parameter condition for each case is the subsequent task.

Because of space cause, we just give the proof in detail for  $0<h<1$ and $(k, \sigma, \alpha)\in \mathcal{S}_4$, but
omit the verbose proof of the rest cases.
By analyzing $F(x_0)$ for $0<h<1$ and $(k,\sigma,\alpha)\in\mathbb{R}_+^3$ we have the sign of $F(x_0)$ as follows.
$F(x_0)>0$ if
$(k,\sigma,\alpha)\in\{(k,\sigma,\alpha)\in\mathbb{R}_+^3:k\geq k_2 ~\mbox{or}~ k_1\leq k<k_2, \alpha>\alpha_2 ~\mbox{or}~ k<k_1, \sigma>1-h-k(1-h)^2, \alpha>\alpha_2\}$;
$F(x_0)<0$ if
$(k,\sigma,\alpha)\in\{(k,\sigma,\alpha)\in\mathbb{R}_+^3:k_1\leq k<k_2, \alpha<\alpha_2~\mbox{or}~k<k_1, \sigma\leq 1-h-k(1-h)^2~\mbox{or}~ k<k_1, \sigma> 1-h-k(1-h)^2, \alpha<\alpha_2\}$;
$F(x_0)=0$ if
$(k,\sigma,\alpha)\in\{(k,\sigma,\alpha)\in\mathbb{R}_+^3:k_1\leq k<k_2, \alpha=\alpha_2~\mbox{or}~k<k_1, \sigma> 1-h-k(1-h)^2, \alpha=\alpha_2\}$,
where $\alpha_2$ is given by (\ref{(7)}).
Furthermore,  $x_0<k$ for $0<h<1$ and $(k, \sigma, \alpha)\in \mathcal{P}_1\cup\mathcal{S}_2\cup\mathcal{P}_3\cup\mathcal{S}_4$ since
$x_0-k=\frac{k\{(1-h)(hk-k+1)-\sigma\}}{k(1-h)^2+\sigma(1+h)}<0$. Now we need to find the intersections of set $\mathcal{S}_4$ and sets of $F(x_0)>0$, $F(x_0)=0$ and $F(x_0)<0$ respectively.
In order to compare the endpoints $\alpha_2$ with $\frac{1}{\sigma k}$, we denote $\alpha_2-\frac{1}{\sigma k}$ by $f(\sigma)$ given in (\ref{(8)}), where
$f(\sigma)=\frac{(1-h)\{(2h+1)\sigma-2h+2\}}{(h\sigma+1-h)^2}>0$
for $0<h<1$ and $(k, \sigma, \alpha)\in \mathcal{S}_4$, implying $\alpha_2>\frac{1}{\sigma k}$.
For $0<h<1$, we can get $F(x_0)>0$ as $(k, \sigma, \alpha)\in\mathcal{S}_{42}$,  $F(x_0)<0$ as $(k, \sigma, \alpha)\in\mathcal{S}_{41}$ and  $F(x_0)=0$ as $(k, \sigma, \alpha)\in\mathcal{L}_{41}$.
Thus, we obtain the corresponding sign of $T|_{E_1}$ for this case.

Although the proof for the case $(k, \sigma, \alpha)\in \mathcal{P}_1\cup\mathcal{S}_2\cup\mathcal{P}_3\cup\mathcal{P}_5$ is omitted, we should account for the two quantities $\sigma_1$ and $\sigma_2$. In the case $(k, \sigma, \alpha)\in \mathcal{P}_1\cup\mathcal{S}_2\cup\mathcal{P}_3$, we still need to compare the endpoints $\alpha_2$ and $\frac{1}{\sigma k}$ so that function $f(\sigma)$ given in (\ref{(8)}) need to be discussed for $(h, k, \sigma)\in\{(h, k, \sigma)\in\mathbb{R}_+^3:h<1, k_1<k<k_2\}$, the properties of which are displayed as follows.
$f(\sigma)>0$ if  $(h, k, \sigma)\in\{(h, k, \sigma)\in\mathbb{R}_+^3:h<1, k_1<k<k_3, \sigma>\sigma_1\}$;
$f(\sigma)<0$ if  $(h, k, \sigma)\in\{(h, k, \sigma)\in\mathbb{R}_+^3:h<1, k_1<k<k_3, \sigma<\sigma_1 ~\mbox{or}~ h<1, k_3\leq k<k_2\}$;
$f(\sigma)=0$ if  $(h, k, \sigma)\in\{(h, k, \sigma)\in\mathbb{R}_+^3:h<1, k_1<k<k_3, \sigma=\sigma_1\}$, where $\sigma_1$ is the unique positive root of $f(\sigma)$ for $0<h<1$ and $k_1<k<k_3$.
In the case $(k, \sigma, \alpha)\in \mathcal{P}_5$, we need to compare the endpoints $\alpha_1$ and $\alpha_2$ for $(h, k, \sigma)\in\{(h, k, \sigma)\in\mathbb{R}_+^3:h<1, k<k_1, \sigma>1-h-k(1-h)^2\}$.
Substituting $\alpha_2$ into $F_1(\alpha)$, we get
$F_1(\alpha_2)=f_1(\sigma)f_2^2(\sigma)/\{k^2(1-h)(h\sigma-h+1)^4\{k(1-h)^2+h+\sigma-1\}^2\}$,
where
\begin{eqnarray*}
\left.
\begin{array}{l}
f_1(\sigma):=\{hk(h-1)+4 h^2+5 h+1\}\sigma^2+(h-1)\{(h-1)^2hk^2+(6h^2-5h-1)k-3h-3\}\sigma\\
\phantom{f_1(\sigma):=}-4k(h-1)^3,\\
f_2(\sigma):=\{-h(h-1)^2k-h^2+h+2\}\sigma^2+(h-1)(h^2k-hk+3h+3)(hk-k+1)\sigma\\
\phantom{f_1(\sigma):=}+k(h-1)^3(hk-k+1).
\end{array}
\right.
\end{eqnarray*}
$f_1(\sigma)>0$ for $\sigma>0$ because all the coefficients  are positive for $0<h<1$ and $0<k<k_1$.
Furthermore, the leading coefficient of $f_2(\sigma)$ is positive and
the constant term of which is negative, implying that $f_2(\sigma)$ is monotonically increasing for $\sigma>0$ and has a unique positive root $\sigma_2$ given in (\ref{(7)}).
Since $f_2(1-h-k(1-h)^2)=-(1-h)^2(kh-k+1)(h^2k-hk+h+1)^2<0$ for $0<h<1$ and $0<k<k_1$,
we have $\sigma_2>1-h-k(1-h)^2$.
Hence, we obtain $\alpha_2=\alpha_1$ for $0<h<1$, $0<k<k_1$ and $\sigma=\sigma_2$ as well as  $\alpha_2>\alpha_1$ for $0<h<1$, $0<k<k_1$, $\sigma>1-h-k(1-h)^2$ and $\sigma\neq\sigma_2$.
The proof of Lemma \ref{lem1} is completed.

The determinant of $E_1$ is positive and  the sign of the trace of $E_1$ is shown in Lemma \ref{lem1},
the qualitative properties of $E_1$ can be derived, namely, $E_1$ is an unstable node or focus if $T|_{E_1}>0$,
$E_1$ is a stable node or focus if $T|_{E_1}<0$ and $E_1$ is center type if $T|_{E_1}=0$.
The stability and topological classification for the equilibria are presented in Table 1.
The proof of Theorem \ref{thm1} is completed.
\end{proof}

\subsection{Bifurcations at $E_k$ and $E_*$}
In this section, we show that both transcritical and pitchfork bifurcations may occur at $E_k$ and
a saddle-node bifurcation may occur at $E_*$.
Table 1 of Theorem \ref{thm1} indicates that system (\ref{(3)}) has a degenerate equilibrium $E_k$ with $T|_{E_k}<0$ and $D|_{E_k}=0$ if $0<h<1$ and
$(k, \sigma, \alpha)\in\mathcal{S}_{1}\cup\mathcal{L}_{1}\cup\mathcal{S}_{41}\cup\mathcal{L}_{41}\cup\mathcal{S}_{42}$, i.e., $k=k_1$. The following theorem displays the bifurcations at $E_k$.
\begin{theorem}
For $0<h<1$ and
$(k, \sigma, \alpha)\in\mathcal{S}_{1}\cup\mathcal{L}_{1}\cup\mathcal{S}_{41}\cup\mathcal{L}_{41}\cup\mathcal{S}_{42}$, $E_k$ is a saddle-node
of system (\ref{(3)}). Moreover,\\
(i) as $(k, \sigma, \alpha)$  crosses $\mathcal{S}_{1}$, i.e., $(k, \sigma, \alpha)$ varies from $\mathcal{P}_{11}\cup\mathcal{P}_{12}$ to $\mathcal{P}_{2}$, a transcritical bifurcation happens at $E_k$ such that
a stable (resp., unstable) node $E_1$ and two saddles $E_0$ and $E_k$ change into a stable node $E_k$ and a saddle $E_0$ for $(k, \sigma, \alpha)\in\mathcal{P}_{12}$ (resp., $(k, \sigma, \alpha)\in\mathcal{P}_{11}$).\\
(ii) as $(k, \sigma, \alpha)$  crosses $\mathcal{S}_{41}$, i.e., $(k, \sigma, \alpha)$ varies from $\mathcal{P}_{32}$ to $\mathcal{P}_{52}$, a transcritical bifurcation happens at $E_k$ such that a stable node $E_1$ and two saddles $E_0$ and $E_k$ change into two stable nodes $E_1$ and $E_k$ and two saddles $E_0$ and $E_2$.\\
(iii) as $(k, \sigma, \alpha)$  crosses $\mathcal{L}_{41}$, i.e., $(k, \sigma, \alpha)$ varies from $\mathcal{S}_{31}$ to $\mathcal{S}_{51}$, a transcritical bifurcation happens at $E_k$ such that
a center type equilibrium $E_1$ and two saddles $E_0$ and $E_k$ change into a center type equilibrium $E_1$, a stable node $E_k$ and two saddles $E_0$ and $E_2$.\\
(iv) as $(k, \sigma, \alpha)$  crosses $\mathcal{S}_{42}$, i.e., $(k, \sigma, \alpha)$ varies from $\mathcal{P}_{31}$ to $\mathcal{P}_{51}$, a transcritical bifurcation happens at $E_k$ such that
two saddles $E_0$ and $E_k$ and an unstable node $E_1$ change into an unstable node $E_1$, a stable node $E_k$ and two saddles $E_0$ and $E_2$.\\
(v) as $(k, \sigma, \alpha)$  crosses $\mathcal{L}_{1}$, i.e., $(k, \sigma, \alpha)$ varies from $\mathcal{S}_{21}\cup\mathcal{S}_{22}$ to $\mathcal{S}_{3}$, a pitchfork bifurcation happens at $E_k$ such that two saddles $E_0$ and $E_k$ and a stable (resp., unstable) node $E_1$ change into a saddle $E_0$ and a stable node $E_k$ for $(k, \sigma, \alpha)\in\mathcal{S}_{22}$ (resp., $(k, \sigma, \alpha)\in\mathcal{S}_{21}$).
\label{thm2}
\end{theorem}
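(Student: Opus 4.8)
The plan is to carry out a parametrized center--manifold reduction at $E_k$ with bifurcation parameter $k$ near $k_1$, reduce system (\ref{(3)}) to a one--dimensional equation on the center manifold, match that equation to a transcritical (resp.\ pitchfork) normal form, and finally translate the abstract picture into the statements (i)--(v) using the stratification of Theorem~\ref{thm1}.

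First I would move $E_k$ to the origin by $u=x-k$, $v=y$, noting that $\{v=0\}$ remains invariant for (\ref{(3)}) and hence will be the local strong--stable curve of $E_k$. The Jacobian $J$ at $E_k$ is upper triangular with diagonal entries $\lambda_1:=-k\sigma(1+hk)$ and $\lambda_2(k):=-k\{k(h-1)+1\}=k\{(1-h)k-1\}$; on $k=k_1$ one has $\lambda_1=-\sigma k_1^2<0$, $\lambda_2(k_1)=0$ and $\lambda_2'(k_1)=2(1-h)k_1-1=1$, so $E_k$ is semi--hyperbolic throughout $\mathcal{S}_1\cup\mathcal{L}_1\cup\mathcal{S}_{41}\cup\mathcal{L}_{41}\cup\mathcal{S}_{42}$ and the zero eigenvalue crosses transversally as $k$ passes $k_1$. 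The eigenvectors are $(1,0)$ for $\lambda_1$ and $(-1/\sigma,1)$ for $\lambda_2(k_1)=0$ (using $1+hk_1=k_1$), so the linear substitution $\xi=u+v/\sigma$, $\eta=v$ brings (\ref{(3)}) with $k=k_1$ to $\dot\xi=\lambda_1\xi+(\mbox{h.o.t.})$, $\dot\eta=(\mbox{h.o.t.})$, and, because the equation for $\eta=y$ carries a factor $y$, one obtains on $k=k_1$
\begin{equation*}
\dot\eta=\eta\big(\xi+(\alpha k_1-1/\sigma)\eta+\alpha\xi\eta-(\alpha/\sigma)\eta^2\big).
\end{equation*}
Appending $\dot\mu=0$ with $\mu:=k-k_1$ and constructing the parametrized center manifold $\xi=\phi(\eta,\mu)=O(2)$ gives the reduced equation $\dot\eta=g(\eta,\mu)$ with $g(0,\mu)\equiv0$ (since $E_k$ is an equilibrium for every $k$), $\partial_\eta g(0,0)=0$, and
\begin{equation*}
g(\eta,\mu)=a_1(\mu)\eta+k_1\big(\alpha-1/(\sigma k_1)\big)\eta^2+O(\eta^3)+O(\mu\eta^2),\qquad a_1(0)=0,\quad a_1'(0)>0,
\end{equation*}
where $a_1'(0)>0$ is inherited from $\lambda_2'(k_1)=1>0$.

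The quadratic coefficient $k_1(\alpha-1/(\sigma k_1))$ is negative on $\mathcal{S}_1$ (where $\alpha<1/(\sigma k)$), positive on $\mathcal{S}_4=\mathcal{S}_{41}\cup\mathcal{L}_{41}\cup\mathcal{S}_{42}$ (where $\alpha>1/(\sigma k)$), and vanishes exactly on $\mathcal{L}_1$ (where $\alpha=1/(\sigma k)$). Off $\mathcal{L}_1$ this is the transcritical normal form $\dot\eta=a_1'(0)\mu\eta+c\eta^2+\cdots$ with $c\ne0$: thus $E_k$ is a saddle--node at $k=k_1$, and crossing $k=k_1$ produces a transcritical bifurcation in which $E_k$ and a partner equilibrium at $\eta=-a_1'(0)\mu/c+\cdots$ pass through one another and exchange stability; the sign of $\eta=y$ tells whether the partner is interior, and tracking the roots of the equilibrium polynomial $F$ near $x=k_1$ identifies the partner as $E_1$ on $\mathcal{S}_1$ and as the newly created $E_2$ on $\mathcal{S}_4$. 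Combined with the stability types in Table~1 of Theorem~\ref{thm1}, this yields (i)--(iv). On $\mathcal{L}_1$ the quadratic coefficient is zero, so I would push the expansion to cubic order, $g(\eta,\mu)=a_1'(0)\mu\eta+c_3\eta^3+\cdots$ with $c_3=\phi_2-\alpha/\sigma$, where $\phi_2$ is the $\eta^2$--coefficient of $\phi(\eta,0)$ obtained explicitly from the center--manifold invariance relation matched at order $\eta^2$; a positivity argument ($h\in(0,1)$, $\sigma,k_1>0$, $\alpha=1/(\sigma k_1)$) then shows $c_3\ne0$, so $g$ is a pitchfork normal form. Its nonzero equilibria $\eta=\pm\sqrt{-a_1'(0)\mu/c_3}$ are real on one side of $\mathcal{L}_1$ and complex on the other, and only the $\eta>0$ branch lies in $\overline{\mathbb{R}_+^2}$, namely the equilibrium $E_1$; together with Table~1 this gives (v).

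I expect the cubic--order step on $\mathcal{L}_1$ to be the main obstacle: it requires an honest second--order center--manifold computation for the quartic field (\ref{(3)}) with $h,k,\sigma,\alpha$ kept symbolic, and the non--vanishing of $c_3$ must be secured by a sign estimate, whereas the transcritical cases need only first--order data. A secondary, purely organizational difficulty is translating the normal--form phase pictures into the precise ``which equilibrium becomes which, with which stability'' assertions of (i)--(v); here the stratification of Theorem~\ref{thm1} together with the signs of $D$ and $T$ on the two sides of each surface supply all the required information, so no new idea is needed.
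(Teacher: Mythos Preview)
Your approach is essentially the same as the paper's: a parametrized center--manifold reduction at $E_k$ with $\mu=k-k_1$, diagonalization along the zero/stable eigenspaces, and identification of the reduced scalar equation with the transcritical normal form when $\alpha\ne 1/(\sigma k_1)$ and with the pitchfork normal form when $\alpha=1/(\sigma k_1)$ (equivalently $\alpha=(1-h)/\sigma$). The only difference is cosmetic: you take $\eta=y$ as the center coordinate (exploiting the invariance of $\{y=0\}$), whereas the paper uses the rescaled center variable $u$ with $y=-\sigma u$ and a time rescaling; the resulting quadratic coefficient vanishes at exactly the same locus.

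On the step you flag as the main obstacle, the paper simply carries out the second--order center--manifold match symbolically and evaluates the cubic coefficient at $\alpha=(1-h)/\sigma$, obtaining
\[
c_2\;=\;-\frac{(h-1)^2\{\alpha(h-2)\sigma^2-(h-1)(\alpha-h)\sigma-(h-1)^2\}}{\sigma^2}\bigg|_{\alpha=(1-h)/\sigma}\;=\;\frac{2(1-h)^3}{\sigma},
\]
which is manifestly nonzero for $0<h<1$; so no delicate sign estimate is needed---the explicit calculation is straightforward and closes your $c_3\ne0$ gap immediately. After that, both the paper and your proposal finish (i)--(v) by reading off the equilibrium types on either side of each surface from Theorem~\ref{thm1}.
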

\begin{proof}
Let $\epsilon=k-k_1$. For sufficiently small $|\epsilon|$,
consider system (\ref{(3)}) suspected by the parameter $\epsilon$.
Using the linear transformation $x=u+v+k$, $y=-\sigma u$ and
time-rescaling $\tau:=\frac{-\sigma}{(h-1)^2}t$ to translate $E_k$ to the origin $(0,0)$ and diagonalize the linear part of the suspected system,
we can change the system into the follows
\begin{eqnarray}
\left\{
\begin{array}{l}
\frac{du}{d\tau}=\alpha(h-1)^2u^3+\alpha(h-1)^2u^2v+\frac{(2\alpha\sigma+h-1)(h-1)^2}{\sigma}u^2\epsilon
-\frac{(\alpha\sigma+h-1)(h-1)}{\sigma}u^2+\frac{(h-1)^3}{\sigma}uv\epsilon\\
\phantom{\frac{du}{d\tau}=}-\frac{(h-1)^2}{\sigma}uv+\frac{(h-1)^3}{\sigma}u\epsilon^2-\frac{(h-1)^2}{\sigma}u\epsilon
+O(\parallel(u,v,\epsilon)\parallel^3),\\
\frac{dv}{d\tau}=v-(h^2-1)v\epsilon-(h^2-1)v^2-\frac{(\sigma-1)(h-1)^2}{\sigma}u\epsilon-\frac{(h-1)\{(\sigma-1)(\alpha\sigma+h)+1\}}{\sigma}u^2\\
\phantom{\frac{du}{d\tau}=}-\frac{\alpha h\sigma^2+(2h^2-h-1)\sigma-(h-1)^2}{\sigma}uv+\{((2h-1)\sigma-h+1)\alpha+h(h-1)\}(h-1)u^3\\
\phantom{\frac{du}{d\tau}=}+\{((4h-1)\sigma-h+1)\alpha+3h(h-1)\}(h-1)u^2v+\frac{(\sigma-1)(h-1)^3}{\sigma}u\epsilon^2+h(h-1)^2v^3\\
\phantom{\frac{du}{d\tau}=}+\frac{\{2\sigma(\sigma-1)\alpha+(2h-1)\sigma-h+1\}(h-1)^2}{\sigma}u^2\epsilon
+h\{2\alpha\sigma+3(h-1)\}(h-1)uv^2+2h(h-1)^2v^2\epsilon\\
\phantom{\frac{du}{d\tau}=}+\frac{\{2\alpha h\sigma^2+(4h^2-5h+1)\sigma-(h-1)^2\}(h-1)}{\sigma}uv\epsilon+h(h-1)^2v\epsilon^2
+O(\parallel(u,v,\epsilon)\parallel^3),\\
\frac{d\epsilon}{d\tau}=0.
\end{array}
\right.
\label{(15)}
\end{eqnarray}
By Theorem 1 of \cite{Carr},
system (\ref{(15)}) has a two-dimensional center manifold $W^c: v=h(u,\epsilon)$ near the origin,
which is $C^\infty$ and tangent to the plane $v=0$ at the origin in the $(u,v,\epsilon)$-space. Let
\begin{equation}
v=h(u,\epsilon)=au^2+bu\epsilon+c\epsilon^2+O(\parallel(u,\epsilon)\parallel^3).
\label{(16)}
\end{equation}
Since it is invariant to solutions $(u(t), v(t), \epsilon(t))$ of system (\ref{(15)}),
we can differentiate both sides of (\ref{(16)}),  which leads to the equality
$\dot{v}=h_u\dot{u}+h_\epsilon\dot{\epsilon}$.
Substituting equations of (\ref{(15)}) into the equality
and comparing the coefficients
of $u^2$, $\epsilon^2$ and $u\epsilon$, we get
$a={ \left( \alpha\,{\sigma}^{2}-\alpha\,\sigma+h\sigma-h+1
 \right)  \left( -1+h \right) /\sigma}$, $b=0$ and
 $c={\left( \sigma-1 \right)  \left( -1+h \right) ^{2}/\sigma}$.
Thus,
system (\ref{(15)}) restricted to center manifold (\ref{(16)}) can be written as
\begin{eqnarray}
\begin{array}{l}
\frac{du}{dt}=-{\frac { \left( -1+h \right) ^{2}\epsilon\,u}{\sigma}}-{\frac {
 \left( \alpha\,\sigma+h-1 \right)  \left( -1+h \right) {u}^{2}}{
\sigma}}+{\frac { \left( -1+h \right) ^{3}u{\epsilon}^{2}}{\sigma}}
+c_1{u}^{2}\epsilon+c_2{u}^{3}+O(\parallel(u,\epsilon)\parallel^4),
\end{array}
\label{(17)}
\end{eqnarray}
where $c_1:=(h-1)^2\{2\alpha\sigma^2-(h-1)(h-2)\sigma+(h-1)^2\}/\sigma^2$
and
$c_2:=-(h-1)^2\{\alpha(h-2)\sigma^2-(h-1)(\alpha-h)\sigma-(h-1)^2\}/\sigma^2$.

When $\alpha\neq\frac{1-h}{\sigma}$, it shows that $\frac{(\alpha\sigma+h-1)(1-h)}{\sigma}\neq0$ in (\ref{(17)})
and the origin is the unique equilibrium as $\epsilon=0$ and another equilibrium arises from the origin as $\epsilon\neq0$.
Moreover, the stabilities of the equilibria exchange as $\epsilon$ varies from negative to positive. Thus, $E_k$ is a saddle-node as $\epsilon=0$ and system (\ref{(3)}) undergoes a transcritical bifurcation at $E_k$ for $(k, \sigma, \alpha)\in\mathcal{S}_{1}\cup\mathcal{S}_{41}\cup\mathcal{L}_{41}\cup\mathcal{S}_{42}$ (\cite[p.149]{Guckenheimer}).
When $\alpha=\frac{1-h}{\sigma}$, it shows that $\frac{(\alpha\sigma+h-1)(1-h)}{\sigma}=0$ and $\frac{2(1-h)^3}{\sigma}\neq0$ in (\ref{(17)})
and the origin is the unique equilibrium as $\epsilon=0$ and the other two equilibria arise from the origin as $\epsilon>0$.
Thus, $E_k$ is a saddle-node as $\epsilon=0$ and system (\ref{(3)}) undergoes a pitchfork bifurcation at $E_k$ for $(k, \sigma, \alpha)\in\mathcal{L}_{1}$ (\cite[p.149]{Guckenheimer}).
The proof is completed.
\end{proof}

As indicated in Theorem \ref{thm1}, system (\ref{(3)}) has a degenerate equilibrium $E_*$ for $0<h<1$ and $(k, \sigma, \alpha)\in\mathcal{S}_{5}$, i.e., $D|_{E_*}=0$. To consider what bifurcation system (\ref{(3)}) undergoes for this degenerate case,
let us  first discuss the sign of the trace $T|_{E_*}$ given in  (\ref{(13)}).
Substituting $\alpha=\alpha_1$ and $x_*$ (given in (\ref{(07)}) and (\ref{(012)}) respectively) into $T|_{E_*}$, we obtain
\begin{eqnarray}
\begin{array}{l}
T|_{E_*}=\frac{(hk-k+9)}{8(1-h)^2(\alpha_1k\sigma+3)}\{(h\sigma-h+1)\sqrt{(kh-k+1)(hk-k+9)}\\
\phantom{f_1(\sigma):=}-(kh^2-hk+h+4)\sigma+3(1-h)(hk-k+1)\}.
\end{array}
\label{(18)}
\end{eqnarray}
For $0<h<1$ and $0<k<k_1$, the sign of $(h\sigma-h+1)\sqrt{(kh-k+1)(hk-k+9)}$ is always positive, but that of $-(kh^2-hk+h+4)\sigma+3(1-h)(hk-k+1)$
is indeterminate.
If $-(kh^2-hk+h+4)\sigma+3(1-h)(hk-k+1)\geq0$ in (\ref{(18)}), i.e., $0<\sigma\leq\frac{3(1-h)(hk-k+1)}{kh^2-hk+h+4}$, it is evident that $T|_{E_*}>0$.
If $-(kh^2-hk+h+4)\sigma+3(1-h)(hk-k+1)<0$, i.e., $\sigma>\frac{3(1-h)(hk-k+1)}{kh^2-hk+h+4}$,
we can derive the following relationship
\begin{eqnarray*}
\begin{array}{l}
f_2(\sigma)=-\frac{1}{8}\big\{\{(h\sigma-h+1)\sqrt{(kh-k+1)(hk-k+9)}\}^2\\
\phantom{f_2(\sigma)=}-\{-(kh^2-hk+h+4)\sigma+3(1-h)(hk-k+1)\}^2\big\}.
\end{array}
\end{eqnarray*}
Based on the fact that $f_2(\sigma)>0$ if $\sigma>\sigma_2$, $f_2(\sigma)<0$ if $0<\sigma<\sigma_2$ and $f_2(\sigma)=0$ if $\sigma=\sigma_2$ as well as the following inequality
\begin{eqnarray*}
\begin{array}{l}
f_2(\frac{3(1-h)(hk-k+1)}{kh^2-hk+h+4})=-\frac{2(h-1)^2(hk-k+9)(hk-k+1)(h^2k-hk+h+1)^2}{(h^2k-hk+h+4)^2}<0,
\end{array}
\end{eqnarray*}
we conclude that $T|_{E_*}>0$ if $\frac{3(1-h)(hk-k+1)}{kh^2-hk+h+4}<\sigma<\sigma_2$, $T|_{E_*}<0$ if $\sigma>\sigma_2$ and $T|_{E_*}=0$ if $\sigma=\sigma_2$.
From the discussion, the sign of $T|_{E_*}$ is obtained for $0<h<1$ and $(k, \sigma, \alpha)\in\mathcal{S}_{5}$, namely, $T|_{E_*}>0$ if $0<\sigma<\sigma_2$, $T|_{E_*}<0$ if $\sigma>\sigma_2$ and $T|_{E_*}=0$ if $\sigma=\sigma_2$.

For $0<h<1$ and $(k, \sigma, \alpha)\in\mathcal{S}_{5}$ with $\sigma\neq\sigma_2$, the following theorem displays that system (\ref{(3)}) undergoes a saddle-node bifurcation at $E_*$.
\begin{theorem}
For $0<h<1$ and
$(k, \sigma, \alpha)\in\mathcal{S}_{5}$ with $\sigma\neq\sigma_2$, $E_*$  is a saddle-node
of system (\ref{(3)}) and a saddle-node bifurcation happens at $E_*$ as $(k, \sigma, \alpha)$ crosses $\mathcal{S}_{5}$.
Moreover, as $(k, \sigma, \alpha)$ changes from $\mathcal{P}_{4}$ to $\mathcal{P}_{51}\cup\mathcal{P}_{52}$, an unstable (resp.,
stable) node $E_1$ and a saddle $E_2$ arise for $(k, \sigma, \alpha)\in\mathcal{P}_{51}$ (resp., $(k, \sigma, \alpha)\in\mathcal{P}_{52}$).
\label{thm3}
\end{theorem}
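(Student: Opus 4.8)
The plan is to reduce system (\ref{(3)}) near $E_*$ to a one-dimensional equation on a center manifold and apply the standard saddle-node normal-form criterion (Guckenheimer--Holmes, p.~145). Since Theorem~\ref{thm1} already gives $D|_{E_*}=0$ with $T|_{E_*}\neq 0$ when $\sigma\neq\sigma_2$ (established in the discussion preceding the statement), the Jacobian at $E_*$ has exactly one zero eigenvalue and one nonzero eigenvalue equal to $T|_{E_*}$. First I would introduce the bifurcation parameter, taking $\alpha$ as the active parameter and writing $\mu=\alpha-\alpha_1$ (equivalently one could move along a transversal to $\mathcal{S}_5$); for small $|\mu|$ I suspend system (\ref{(3)}) with the trivial equation $\dot\mu=0$, exactly as in the proof of Theorem~\ref{thm2}.

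Next I would translate $E_*=(x_*,y_*)$ to the origin by $x=\xi+x_*$, $y=\eta+y_*$, and then apply a linear change of coordinates that diagonalizes the linear part, sending it to $\mathrm{diag}(0,\lambda)$ with $\lambda=T|_{E_*}$, and a time rescaling to normalize $\lambda$ to $\pm1$. This puts the suspended system in the form $\dot u = f(u,w,\mu)$, $\dot w = \pm w + g(u,w,\mu)$, $\dot\mu=0$, with $f,g$ starting at quadratic order in $(u,w,\mu)$. By Theorem~1 of \cite{Carr} there is a center manifold $w=h(u,\mu)=a u^2+bu\mu+c\mu^2+O(\|(u,\mu)\|^3)$; substituting $\dot w=h_u\dot u+h_\mu\dot\mu$ and matching coefficients of $u^2,u\mu,\mu^2$ determines $a,b,c$, exactly the computation carried out in (\ref{(16)})--(\ref{(17)}). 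The reduced equation on $W^c$ then reads $\dot u = p_1\mu + p_2 u^2 + (\text{higher order})$, and the saddle-node conditions to verify are $p_1\neq0$ (transversality, i.e. $\partial f/\partial\mu\neq0$ at the origin) and $p_2\neq0$ (nondegeneracy, i.e. $\tfrac12\partial^2 f/\partial u^2\neq0$).

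The nondegeneracy coefficient $p_2$ should be exactly where the hypothesis $\sigma\neq\sigma_2$ reappears: because $E_*$ is the double root $x_*$ of $F(x)$ with $F'(x_*)=0$, the quadratic term in the $u$-equation is governed by $F''(x_*)$, which is nonzero since $F$ is a genuine cubic (the leading coefficient $\alpha\sigma(h-1)\neq0$ for $0<h<1$); the only way nondegeneracy could fail would be a cancellation tied to $T|_{E_*}=0$, i.e. $\sigma=\sigma_2$, which is excluded. The transversality coefficient $p_1$ requires checking that moving $\alpha$ off $\alpha_1$ genuinely splits the double equilibrium, which amounts to $\partial F/\partial\alpha\,(x_*,\alpha_1)\neq0$; since $\partial F/\partial\alpha=\sigma(h-1)x^2(x-k)$ and $0<x_*<k$ with $0<h<1$, this is manifestly nonzero. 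I expect the main obstacle to be purely computational: carrying the diagonalizing transformation and the center-manifold substitution through cleanly enough to read off $p_1$ and $p_2$ in a form where their signs are transparent, and in particular identifying the sign combination that distinguishes the two cases. Finally, the direction of the bifurcation — that the unstable (resp.\ stable) node $E_1$ and the saddle $E_2$ appear on the side $\mathcal{P}_{51}$ (resp.\ $\mathcal{P}_{52}$) rather than on $\mathcal{P}_4$ — follows by combining the sign of $p_1/p_2$ with the already-known stability types: on the $\mathcal{P}_5$ side the two emanating equilibria inherit $F'(x_1)>0$ and $F'(x_2)<0$, hence $D|_{E_1}>0$, $D|_{E_2}<0$, and the trace sign of $E_1$ is supplied by Lemma~\ref{lem1}, which is $T|_{E_1}>0$ on $\mathcal{P}_{51}$ and $T|_{E_1}<0$ on $\mathcal{P}_{52}$; consistency with $T|_{E_*}>0$ for $\sigma<\sigma_2$ and $T|_{E_*}<0$ for $\sigma>\sigma_2$ then pins down the stated picture.
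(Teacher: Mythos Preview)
Your plan coincides with the paper's proof: suspend by $\epsilon=\alpha-\alpha_1$, translate $E_*$ to the origin, diagonalize the linear part, reduce to a one-dimensional center manifold, and verify the two saddle-node conditions on the resulting equation $\dot u=d_0(\epsilon)+d_1(\epsilon)u+d_2(\epsilon)u^2+O(|u|^3)$, with the final classification read off from Lemma~\ref{lem1} exactly as you describe. The only refinement worth noting is in the nondegeneracy check: the paper shows the \emph{numerator} of $d_2(0)$ is nonzero on all of $\mathcal{S}_5$ via a pseudo-division by $F'(x_*)$ (this is where your intuition about $F''(x_*)$ is made precise), while $T|_{E_*}$ sits in the \emph{denominator}---so $\sigma=\sigma_2$ does not make $p_2$ vanish but rather collapses the hyperbolic direction, invalidating the one-dimensional reduction and sending you to the Bogdanov--Takens case instead.
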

\begin{proof}
Let $\epsilon=\alpha-\alpha_1$. For sufficiently small $|\epsilon|$,
consider system (\ref{(3)}) suspected by the parameter $\epsilon$.
By translating $E_*$ to the origin $(0,0)$ we can expand the suspected system as follows
\begin{eqnarray}
\left\{
\begin{array}{l}
\frac{dx}{dt}=a_{100}x+a_{010}y+a_{001}\epsilon+a_{200}x^2+a_{020}y^2+a_{110}xy+a_{011}\epsilon y+a_{101}\epsilon x+O(\parallel(x,y,\epsilon)\parallel^3),\\
\frac{dy}{dt}=b_{100}x+b_{010}y+b_{001}\epsilon+b_{110}xy+b_{101}x\epsilon+b_{020}y^2+b_{011}y\epsilon+O(\parallel(x,y,\epsilon)\parallel^3),\\
\displaystyle \frac{d\epsilon}{dt}=0,
\end{array}
\right.
\label{(19)}
\end{eqnarray}
where the coefficients $a_{ijk}$ and $b_{ijk}$ are given in the Appendix
with $\alpha_1$ and $x_*$ given in (\ref{(07)}) and (\ref{(012)}) respectively.
Using the linear transformation
$x=u+\frac{a_{100}}{b_{100}}v+\frac{a_{001}(a_{010}-a_{100})}{a_{100}(a_{100}+b_{010})}\epsilon$ and $y=-\frac{b_{100}}{b_{010}}u+v$
to diagonalize the linear part of the suspected system, we obtain the following form
\begin{eqnarray}
\left\{
\begin{array}{l}
\frac{du}{dt}=p_{001}\epsilon+p_{200}u^2+p_{020}v^2+p_{002}\epsilon^2+p_{110}uv+p_{011}\epsilon v+p_{101}\epsilon u+O(\parallel(u,v,\epsilon)\parallel^3),\\
\frac{dv}{dt}=q_{010}v+q_{200}u^2+q_{020}v^2+q_{002}\epsilon^2+q_{110}uv+q_{101}u\epsilon+q_{011}v\epsilon+O(\parallel(u,v,\epsilon)\parallel^3),\\
\displaystyle \frac{d\epsilon}{dt}=0,
\end{array}
\right.
\label{(20)}
\end{eqnarray}
where $p_{ijk}$ and $q_{ijk}$ are displayed in the Appendix.
By Theorem 1 of \cite{Carr}, system (\ref{(20)}) has a two-dimensional center manifold $W^c: v=h_1(u,\epsilon)$ near the origin,
which is $C^\infty$ and
 tangent to the plane $v=0$ at the origin in the $(u,v,\epsilon)$-space. Let
\begin{eqnarray}
v=h_1(u,\epsilon)=c_{20}u^2+c_{02}\epsilon^2+c_{11}u\epsilon+O(\parallel(u,\epsilon)\parallel^3).
\label{(21)}
\end{eqnarray}
Differentiating both sides of (\ref{(21)}) leads to the equality
$\dot{v}=h_{1_u}\dot{u}+h_{1_\epsilon}\dot{\epsilon}$.
Substituting equations of (\ref{(20)}) into the equality
and comparing the coefficients
of $u^2$, $\epsilon^2$ and $u\epsilon$, we obtain $c_{20}$,
$c_{11}$ and $c_{02}$ given in the Appendix respectively.
System (\ref{(20)}) restricted to center manifold (\ref{(21)}) can be written as
\begin{eqnarray}
\begin{array}{l}
\frac{du}{dt}=d_0(\epsilon)+d_1(\epsilon)u+d_2(\epsilon)u^2+O(|u|^3),
\end{array}
\label{(22)}
\end{eqnarray}
where
\begin{eqnarray*}
\begin{array}{l}
d_0(\epsilon)=-\frac{(a_{100}+b_{100}) b_{001} b_{010}}{b_{100} (a_{100}+b_{010})}\epsilon+O(|\epsilon|^2),\\
d_2(\epsilon)=\frac{a_{020} b_{100}^2+a_{100} b_{010} b_{110}-a_{100} b_{020} b_{100}-a_{110} b_{010} b_{100}+a_{200} b_{010}^2}{(a_{100}+b_{010}) b_{010}}+O(|\epsilon|),\\
d_1(\epsilon)=\frac{-1}{b_{100} (a_{100}+b_{010})^2}(a_{011} a_{100} b_{100}^2+a_{011} b_{010} b_{100}^2+a_{100}^2 b_{010} b_{101}-a_{100}^2 b_{011} b_{100}-a_{100} a_{101} b_{010} b_{100}\\
\phantom{d_1(\epsilon)=}+a_{100} b_{001} b_{010} b_{110}-a_{100} b_{001} b_{100} b_{110}+a_{100} b_{010}^2 b_{101}-a_{100} b_{010} b_{011} b_{100}-a_{101} b_{010}^2 b_{100}\\
\phantom{d_1(\epsilon)=}-a_{110} b_{001} b_{010} b_{100}+a_{110} b_{001} b_{100}^2+2 a_{200} b_{001} b_{010}^2-2 a_{200} b_{001} b_{010} b_{100})\epsilon+O(|\epsilon|^2).
\end{array}
\end{eqnarray*}
In the following, we prove $d_2(0)\neq0$ for $0<h<1$ and $(k, \sigma, \alpha)\in\mathcal{S}_{5}$ with $\sigma\neq\sigma_2$, where
\begin{eqnarray*}
\left.
\begin{array}{l}
d_2(0)=\frac{a_{020} b_{100}^2+a_{100} b_{010} b_{110}-a_{100} b_{020} b_{100}-a_{110} b_{010} b_{100}+a_{200} b_{010}^2}{(a_{100}+b_{010}) b_{010}}.
\end{array}
\right.
\end{eqnarray*}
Denote the numerator of $d_2(0)$ by $\tilde{d_2}(0)$, it follows immediately that
$\tilde{d_2}(0)=-x_*^4 \alpha_1^2 \sigma^3 (k-x_*)^2 (h-1)^2f_3(x_*)/k$ with
$f_3(x_*):=-\alpha_1 \sigma (h+2) x_*^3+\alpha_1 k \sigma (h+3) x_*^2-k (\alpha_1 k \sigma-h-2) x_*-k (k-1)$,
which is a cubic polynomial.
Since $F'(x)$ is a quartic polynomial and $F'(x_*)=0$, we use the Maple command ``prem"  to get the pseudo-remainder of $f_3(x_*)$ divided by $F'(x_*)$.
$$prem(f_3(x_*), F'(x_*), x_*,`m\mbox{'})=\alpha_1^2\sigma^2k(h-1)^2\{(\alpha_1k\sigma(2h+1)+6(h+2))x_*+k(h-4)+9\},$$
where $m=9\alpha_1^2\sigma^2(-1+h)^2$.
Substituting $x_*$ given by (\ref{(012)}) into the pseudo-remainder leads to
$f_3(x_*)=\frac{k}{18(k\alpha_1\sigma+3)(h-1)}\{-9k\sigma(hk-k+3)\alpha_1-36(h-1)k-162\}>0$
for $0<h<1$ and $(k, \sigma, \alpha)\in\mathcal{S}_{5}$. Thus, $\tilde{d_2}(0)<0$ for $0<h<1$ and $(k, \sigma, \alpha)\in\mathcal{S}_{5}$.
In the denominator of $d_2(0)$, $b_{010}>0$ is obvious and $a_{100}+b_{010}$, i.e., the trace $T|_{E_*}$, has been discussed before this theorem.
Hence, for $0<h<1$ and $(k, \sigma, \alpha)\in\mathcal{S}_{5}$ with $\sigma\neq\sigma_2$, we have $d_2(0)>0$ if $\sigma>\sigma_2$ and $d_2(0)<0$ if $0<\sigma<\sigma_2$.

Using the translation $u=w-\frac{d_1(\epsilon)}{2d_2(\epsilon)}$ and  time-rescaling $\tau:=d_2(\epsilon)t$ to system (\ref{(22)}), we get
\begin{eqnarray}
\begin{array}{l}
\frac{dw}{d\tau}=\zeta(\epsilon)+w^2+O(|w|^3),
\end{array}
\label{(23)}
\end{eqnarray}
where
$\zeta(\epsilon):=\{4d_0(\epsilon)d_2(\epsilon)-d_1^2(\epsilon)\}/4d_2^2(\epsilon)$.
The computation yields
$\zeta(0)=0$ and $\zeta'(0)=d_0'(0)/d_2(0)$,
where
$d_0'(0)=-(a_{100}+b_{100}) b_{001} b_{010}/b_{100} (a_{100}+b_{010})$.
It is obvious that both $b_{100}$ and $b_{001}$ are positive. In addition,
$a_{100}+b_{100}=\sigma x_*\{\alpha_1\sigma(k-x_*)x_*+k\}(k-2x_*)/k<0$
because $k-2x_*=-\frac{-k\alpha_1\sigma+2\sqrt{k\alpha_1\sigma(\alpha_1k\sigma+3)}}{\alpha_1\sigma}<0$.
Thus, $\zeta'(0)<0$ for $0<h<1$ and $(k, \sigma, \alpha)\in\mathcal{S}_{5}$ with $\sigma\neq\sigma_2$.

Hence,  the origin is the unique equilibrium of (\ref{(23)}) as
$\epsilon=0$ and two equilibria arise from the origin as $\epsilon$ varies from
$0$ to positive when $\sigma\neq\sigma_2$. Therefore, for $0<h<1$ and $(k, \sigma, \alpha)\in\mathcal{S}_{5}$ with $\sigma\neq\sigma_2$, a saddle-node
bifurcation occurs at $E_*$ as $\alpha$ changes from $\alpha=\alpha_1$ to
$\alpha>\alpha_1$ such that an unstable (stable) node $E_1$ and a saddle $E_2$ emerge from $E_*$ if $\sigma<\sigma_2$ (resp. $\sigma>\sigma_2$).
The proof is completed.
\end{proof}

\section{Hopf Bifurcation at $E_1$}
As indicated in Theorem \ref{thm1}, $E_1(x_1, y_1)$ is of center type for $0<h<1$ and $(k, \sigma, \alpha)\in\mathcal{S}_{11}\cup\mathcal{L}_{21}\cup\mathcal{S}_{31}\cup\mathcal{L}_{41}\cup\mathcal{S}_{51}$, i.e., $T|_{E_1}=0$ and $D|_{E_1}>0$,
where $x_1:=x_0$ given in (\ref{(013)}) and
$y_1:=\sigma k\{k(h-1)^2+h+\sigma-1\}(h\sigma-h+1)/\{k(h-1)^2+\sigma(h+1)\}^2$.
In this section, we show that $E_1$ is a weak focus of multiplicity at most 1 and the Hopf bifurcation occurs at $E_1$.
For convenience, let
$\mathcal{D}:=\mathcal{S}_{11}\cup\mathcal{L}_{21}\cup\mathcal{S}_{31}\cup\mathcal{L}_{41}\cup\mathcal{S}_{51}
=\{(k,\sigma,\alpha)\in\mathbb{R}_+^3:k_1\leq k<k_2, \alpha=\alpha_2 ~\mbox{or}~ k<k_1, \sigma>\sigma_2,\alpha=\alpha_2\}$.
\begin{theorem}
For $0<h<1$ and
$(k, \sigma, \alpha)\in\mathcal{D}$, equilibrium $E_1$ of system (\ref{(3)}) is a stable weak focus of multiplicity 1 and
 one stable limit cycle arises near $E_1$ as $\alpha$ varies from $\alpha=\alpha_2$ to $\alpha>\alpha_2$.
\label{thm4}
\end{theorem}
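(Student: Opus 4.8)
The plan is to compute the first Lyapunov coefficient (focus quantity) of system~(\ref{(3)}) at $E_1$ under the center-type condition $T|_{E_1}=0$, $D|_{E_1}>0$, show it is negative (so $E_1$ is a stable weak focus of multiplicity exactly $1$), and then verify a transversality condition on the trace as $\alpha$ crosses $\alpha_2$ so that the Hopf bifurcation theorem applies. First I would translate $E_1=(x_1,y_1)$ to the origin via $x=x_1+\xi$, $y=y_1+\eta$, using the known closed forms $x_1=x_0$ in~(\ref{(013)}) and the stated $y_1$; after rescaling time by the frequency $\sqrt{D|_{E_1}}$ and applying a linear change of coordinates that puts the linear part in the standard form $\bigl(\begin{smallmatrix}0&-1\\1&0\end{smallmatrix}\bigr)$, I would read off the quadratic and cubic coefficients of the planar vector field (system~(\ref{(3)}) is quartic in the original variables, but because all equilibria lie on the parabola~(\ref{(5)}) and satisfy the branch relation $1+hx_i(1+\alpha y_i)=x_i(1+\alpha y_i)$, the relevant local expansion simplifies considerably). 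Then I would apply the classical formula for the first focus quantity $\Lambda_1$ (e.g.\ the Guckenheimer--Holmes expression in terms of the $f_{xxx},f_{xyy},g_{xxy},g_{yyy},f_{xy},g_{xy},\ldots$ mixed partials at the origin).

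The key intermediate object is, as the paper announces, a quartic polynomial in $\alpha$ (equivalently, after using $\alpha=\alpha_2$ and clearing the square root appearing in $\alpha_2$, a polynomial with complicated coefficients in $h,k,\sigma$) whose sign controls the stability of the weak focus. My plan for its sign analysis follows the toolbox advertised in the introduction: use pseudo-division (Maple's \texttt{prem}) to reduce $\Lambda_1$ modulo the equilibrium equation $F(x_1)=0$ and the branch relation, as was already done to simplify $T|_{E_i}$ to~(\ref{(13)}); this replaces $x_1$ by a rational function of the parameters and collapses $\Lambda_1$ to a manageable rational expression whose numerator is a polynomial $P(h,k,\sigma)$. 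I would then establish $P<0$ on the parameter region $\mathcal{D}$ by real-root isolation: decompose $\mathcal{D}$ according to $k\lessgtr k_1$, $k\lessgtr k_2$, and $\sigma\gtrless\sigma_2$, substitute the defining relations for $\alpha_2$ (and rationalize the radical $\sqrt{(hk-k+1)(hk-k+9)}$ entering $\sigma_2$), and check on each cell that $P$ has no real root in the admissible range by sign evaluation at the boundary quantities $k_1,k_2,k_3,\sigma_1,\sigma_2$ together with a discriminant/Sturm-sequence argument — exactly the complete-discrimination-system style used later for the Bogdanov--Takens transversality. This sign computation, with its large-degree semialgebraic certificates, is the main obstacle; the rest is bookkeeping.

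Once $\Lambda_1<0$ is certified, I would conclude that $E_1$ is a stable weak focus of multiplicity exactly $1$ (multiplicity $\le 1$ because a nonzero first focus quantity already terminates the expansion). For the bifurcation itself, I would take $\alpha$ as the bifurcation parameter with $(k,\sigma)$ fixed so that $(k,\sigma,\alpha_2)\in\mathcal{D}$, note that $D|_{E_1}>0$ persists for $\alpha$ near $\alpha_2$, and compute $\frac{\partial}{\partial\alpha}T|_{E_1}\big|_{\alpha=\alpha_2}$. From Lemma~\ref{lem1} the sign of $T|_{E_1}$ changes from negative (on the $\alpha<\alpha_2$ side, i.e.\ $\mathcal{P}_{12}\cup\mathcal{S}_{22}\cup\mathcal{P}_{32}\cup\mathcal{S}_{41}\cup\mathcal{P}_{52}$) to positive (on $\mathcal{S}_{42}\cup\mathcal{P}_{51}$) as $\alpha$ increases through $\alpha_2$, which already forces the transversality derivative to be nonzero with the correct sign; to be fully rigorous I would differentiate $T|_{E_1}$ in~(\ref{(13)}) using $\partial x_1/\partial\alpha$ obtained by implicit differentiation of $F(x_1)=0$ (again simplified by \texttt{prem}), getting a rational expression whose sign on $\mathcal{D}$ is read off as above. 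Since the weak focus is stable and the trace becomes positive for $\alpha>\alpha_2$ (the inner equilibrium loses stability), the Hopf theorem (\cite[p.\ 150--152]{Guckenheimer}) yields a supercritical Hopf bifurcation: a unique stable limit cycle surrounds $E_1$ for $\alpha$ slightly larger than $\alpha_2$. This also proves the ``at most one limit cycle via Hopf'' claim announced in the introduction.
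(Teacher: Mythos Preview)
Your plan is sound and would yield a correct proof; the main structural difference from the paper lies in how the sign of the first focal value is certified. After translation, normalization, and the standard Lyapunov-coefficient computation, the paper also arrives at a quartic --- but it is a quartic $G(\sigma)=\sum_{i=0}^4 L_i\sigma^i$ in $\sigma$ (not in $\alpha$; on $\mathcal{D}$ the constraint $\alpha=\alpha_2$ is rational in $h,k,\sigma$ and no radical needs clearing). Rather than invoking Sturm sequences or the complete discrimination system as you propose, the paper shows that \emph{all five coefficients $L_i$ are nonpositive} for $0<h<1$, $k_1\le k<k_2$, which immediately gives $G(\sigma)<0$ for $\sigma>0$; for the remaining piece $k<k_1$, $\sigma>\sigma_2$ it performs the shift $\sigma=\rho+\sigma_2$, reduces the resulting coefficients modulo $f_2(\sigma_2)=0$ by pseudo-division, and again verifies every coefficient $\tilde L_i$ of the shifted quartic $\tilde G(\rho)$ is negative, whence $\tilde G(\rho)<0$ for $\rho>0$. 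Each individual coefficient sign is then checked by an elementary trick (a M\"obius substitution in $k$ that sends the interval $(0,k_1)$ or $(k_1,k_2)$ to $(0,\infty)$, followed by inspecting coefficient signs). Your Sturm/CDS route would also succeed but is heavier machinery than needed here; the paper reserves that apparatus for the Bogdanov--Takens transversality. For the Hopf transversality the paper differentiates the \emph{unreduced} trace $\tilde T|_{E_1}$ with respect to $\alpha$ and evaluates at $\alpha=\alpha_2$, obtaining a manifestly positive expression; your implicit-differentiation route through $F(x_1)=0$ is equivalent.
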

\begin{proof}
Translating equilibrium $E_1$ to the origin, system (\ref{(3)}) becomes the following system
\begin{eqnarray}
\left\{
\begin{array}{l}
\frac{dx}{dt}=a_{10} x+a_{01} y+a_{20} x^2+a_{11} x y+a_{02} y^2+a_{30} x^3+a_{21} x^2 y+a_{12} x y^2+O(\parallel(x,y)\parallel^4),\\
\frac{dy}{dt}=b_{10} x+b_{01} y+b_{11} x y+b_{02} y^2+b_{12} x y^2+O(\parallel(x,y)\parallel^4),
\end{array}
\right.
\label{(24)}
\end{eqnarray}
where the coefficients are given in the Appendix.
For $0<h<1$ and $(k, \sigma, \alpha)\in\mathcal{D}$, system (\ref{(24)}) has a pair of purely imaginary eigenvalues $\pm \beta$,
where
$\beta:=k\sqrt{\sigma f_2(\sigma)}/\{(k(h-1)^2+\sigma(h+1))\sqrt{1-h}\}$.
The transversal condition of Hopf bifurcation holds because
\begin{eqnarray*}
\begin{array}{l}
\frac{d\tilde{T}|_{E_1}}{d\alpha}|_{\alpha=\alpha_2}=\frac{k\sigma x_1^2(k-x_1)(h-1)^2(h\sigma-h+1)}{k(h-1)^2+\sigma(1+h)}>0.
\end{array}
\end{eqnarray*}
In the following we compute the quantity of focus.
Using the linear transformation $x=\frac{1}{b_{10}}u+\frac{a_{10}}{b_{10}\beta}v$, $y=\frac{1}{\beta}v$ and time-rescaling  $\tau:=\beta t$ to normalize the linear part, we can change  system (\ref{(24)}) into the form
\begin{eqnarray}
\left\{
\begin{array}{l}
\frac{du}{d\tau}=-v+f_{20}u^2+f_{02}v^2+f_{11}uv+f_{30}u^3+f_{21}u^2v+f_{12}uv^2+f_{03}v^3+O(\parallel(u,v)\parallel^4),\\
\frac{dv}{d\tau}=u+g_{02}v^2+g_{11}uv+g_{03}v^3+g_{12}uv^2+O(\parallel(u,v)\parallel^4),
\end{array}
\right.
\label{(25)}
\end{eqnarray}
where the coefficients are given in the Appendix.
The following is devoted to the center-focus determination by the successive function method (\cite{Zhang}).
We can obtain the first order focal value
\begin{eqnarray}
\begin{array}{l}
g:=\frac{x_1y_1\{(1+h)\sigma+k(h-1)^2\}G(\sigma)}{8b_{10}\beta^2\sqrt{\beta}(h\sigma-h+1)^3(1-h)^3\{(h+1)\sigma+k(h-1)^2\}},
\end{array}
\label{(28)}
\end{eqnarray}
where $G(\sigma):=L_4\sigma^4+L_3\sigma^3+L_2\sigma^2+L_1\sigma+L_0$ with
\begin{eqnarray*}
\begin{array}{l}
L_4:= h \{h (h-1) (h^2-h+2) k+h^3+3 h+4\} \{h (h-1)^2 k+h^2-h-2\},\\
L_3:=(h-1) \{3 h^3 (h-1)^4 k^3-h^2 (h-3) (4 h^2-3 h-5) (h-1)^2 k^2\\
\phantom{L3:=}-h (h-1) (h+1) (8 h^3-25 h^2+20 h+17) k-(4 h^3-13 h^2+16 h-3) (h+1)^2\},\\
L_2:=(h-1)^3 \{h^2 (3 h-4) (h-1)^2 k^3+h (h-1) (6 h^3+3 h^2-17 h-8) k^2\\
\phantom{L3:=}+(12 h^2-21 h+8) (h+1)^2 k+3 (2 h-1) (h+1)^2\},\\
L_1:=k (h-1)^4 \{h^3 (h-1)^4 k^4+h^2 (8 h+5) (h-1)^3 k^3+h (2 h+1) (11 h+3) (h-1)^2 k^2\\
\phantom{L3:=}+(h-1) (24 h^3+23 h^2+10 h+7) k+(h+1) (9 h^2+2 h+5)\},\\
L_0:=k^2 (h-1)^6 (h k-k+1) \{2 h^2 (h-1)^2 k^2+h (5 h+2) (h-1) k+3 h^2+3 h+2\}.
\end{array}
\end{eqnarray*}
The sign of $g$ is determined by that of $G(\sigma)$.
We first show $G(\sigma)<0$ for $(h, k,\sigma)\in\{(h, k,\sigma)\in\mathbb{R}_+^3:h<1, k_1\leq k<k_2\}$ by proving that all coefficients $L_i$ $(i=0, 1, 2, 3, 4)$ of $G(\sigma)$ are nonpositive.
It is easy to check that $L_4<0$ and $L_0\leq0$.
In fact, the third factor is negative and the other two are positive in $L_4$ and
the third factor is nonpositive and the other three are positive in $L_0$.
To prove $L_1<0$, let the last factor of $L_1$ be $L_{11}(k)$.
Since $L_{11}(k)$ is negative at the both endpoints of the interval $[k_1, k_2)$,  Lemma 3.1 of \cite{YangLu} indicates that the number of the roots for $L_{11}(k)$ in the interval $(k_1, k_2)$ is equal to that of positive roots for
\begin{eqnarray*}
\begin{array}{l}
\Phi(z):=(1+z)^4L_{11}(\frac{hz+h+1}{h(1-h)(1+z)})\\
\phantom{\Phi(z):}=-\frac{1}{h}\{2hz^4+(4h^2+12h+7)z^3+2(4h^2+11h+9)z^2+4(h^2+5h+5)z+8(h+1)\}.
\end{array}
\end{eqnarray*}
It is easily seen that $\Phi(z)$ has no positive root. We thus infer that $L_{11}(k)<0$, implying $L_1=k(h-1)^4L_{11}(k)<0$.
To see $L_2<0$, let the second factor of $L_2$ be $L_{21}(k)$
and the derivative of which be
\begin{eqnarray*}
\begin{array}{l}
L'_{21}(k):=3h^2 (3 h-4) (h-1)^2 k^2+2h (h-1) (6 h^3+3 h^2-17 h-8) k+(12 h^2-21 h+8) (h+1)^2.
\end{array}
\end{eqnarray*}
The facts that the leading coefficient of $L'_{21}(k)$ is negative and $L'_{21}(k)$ is positive at the endpoints of the interval $[k_1, k_2)$ imply that $L'_{21}(k)>0$ for $k_1\leq k<k_2$. Furthermore, since $L_{21}(k_1)=\frac{6h+5}{1-h}>0$, we deduce $L_{21}(k)>0$. Hence, $L_2=(h-1)^3L_{21}(k)<0$.
To show $L_3<0$, let the second factor of $L_3$ be $L_{31}(k)$. Analysis similar to that in the proof of $L_{21}(k)$ shows
that $L_{31}(k)>0$. Hence, $L_3=(h-1)L_{31}(k)<0$.
Consequently, it follows that $G(\sigma)<0$ for $(h, k,\sigma)\in\{(h, k,\sigma)\in\mathbb{R}_+^3:h<1, k_1\leq k<k_2\}$.

We proceed to show $G(\sigma)<0$ for $(h, k,\sigma)\in\{(h, k,\sigma)\in\mathbb{R}_+^3:h<1, k<k_1, \sigma>\sigma_2\}$.
In order to avoid discussing the monotonicity of $G(\sigma)$, we make the transformation $\sigma=\rho+\sigma_2$ to transform the problem of determining the sign of $G(\sigma)$ in the interval $(\sigma_2, +\infty)$ to
the issue of determining the sign of  $\tilde{G}(\rho)$ in the interval $(0, +\infty)$, where
$\tilde{G}(\rho):=\tilde{L}_4\rho^4+\tilde{L}_3\rho^3+\tilde{L}_2\rho^2+\tilde{L}_1\rho+\tilde{L}_0$
with
\begin{eqnarray*}
\begin{array}{l}
\tilde{L}_4:= h \{h (h-1) (h^2-h+2) k+(h+1) (h^2-h+4)\} \{h (h-1)^2 k+(h+1) (h-2)\},\\
\tilde{L}_3:=4 h \{h (h-1) (h^2-h+2) k+(h+1) (h^2-h+4)\} \{h (h-1)^2 k+(h+1) (h-2)\} \sigma_2\\
\phantom{\tilde{L}_3:=}+(h-1) \{3 h^3 (h-1)^4 k^3-h^2 (h-3) (4 h^2-3 h-5) (h-1)^2 k^2-h (h-1) (h+1) (8 h^3-25 h^2\\
\phantom{\tilde{L}_3:=}+20 h+17) k-(4 h^3-13 h^2+16 h-3) (h+1)^2\},\\
\tilde{L}_2:=(h-1) (h^2 k-h k+h+1)^2 \{\big(3 h (h-1) (2 h^2+h+1) k+3 (2 h+3) (h+1)^2\big) \sigma_2+(h-1)^2 \\
\phantom{\tilde{L}_3:=}\times\big((6 h^2-3 h+8) k+6 h-3\big)\},\\
\tilde{L}_1:=-(h-1)^2 (h^2 k-h k+h+1)^3 \{\big(h (4 h^2+5 h-1) (h-1)^2 k^2+(h-1) (16 h^3+44 h^2+17 h-5) k\\
\phantom{\tilde{L}_3:=}+3 (4 h+5) (h+1)^2\big) \sigma_2+k (h-1)^2 \big(h (h-1)^3 k^2+(h-1) (10 h^2+h-5) k+9 h^2+2 h-1\big)\},\\
\tilde{L}_0:=(h-1)^3 (h^2 k-h k+h+1)^4 \{\big(2 h^3 (h-1)^3 k^3+(13 h^3+15 h^2-3 h+1) (h-1)^2 k^2+2 (10 h+1)\\
\phantom{\tilde{L}_3:=}\times (h-1)(h+1)^2 k+9 (h+1)^3\big) \sigma_2+k (h-1)^2 \big((h-1) (3 h^2-2 h+1) k+3 (h+1)^2\big) (h k-k+1)\},
\end{array}
\end{eqnarray*}
in which $\tilde{L}_2$, $\tilde{L}_1$ and $\tilde{L}_0$ were reduced by the pseudo-division since $f_2(\sigma_2)=0$.
Likewise, we show $\tilde{G}(\rho)<0$ by proving that all coefficients $\tilde{L}_i$ $(i=0, 1, 2, 3, 4)$ are negative for $0<h<1$ and $0<k<k_1$.
It follows immediately that $\tilde{L}_4<0$ because the last factor of $\tilde{L}_4$ is negative and the others are positive.
To prove $\tilde{L}_0<0$, let the last factor of $\tilde{L}_0$ be $\tilde{L}_{01}(\sigma_2)$,
the constant term of which is positive.
Let $LC_0(k)$ be the leading coefficient of $\tilde{L}_{01}(\sigma_2)$. The fact that
\begin{eqnarray*}
\begin{array}{l}
\Phi_0(z):=(1+z)^3LC_0(\frac{1}{(1-h)(1+z)})=9(h+1)^3z^3+(7h+25)(h+1)^2z^2+2(6h^2+15h+12)z+8
\end{array}
\end{eqnarray*}
has no positive root shows, by Lemma 3.1 of \cite{YangLu}, that $LC_0(k)$ has no root in the interval $(0, k_1)$.
Since $LC_0(0)>0$, we immediately obtain $LC_0(k)>0$.
Therefore, we have $\tilde{L}_{01}(\sigma_2)>0$, which implies $\tilde{L}_0=(h-1)^3 (h^2 k-h k+h+1)^4\tilde{L}_{01}(\sigma_2)<0$.
In the following, we omit the details of the proof about $\tilde{L}_i<0$ ($i=1, 2, 3$).
We claim that $\tilde{L}_1<0$. In fact, let the last factor of $\tilde{L}_1$ be $\tilde{L}_{11}(\sigma_2)$ and obtain which is positive by analyzing the monotonicity. Since $-(h-1)^2 (h^2 k-h k+h+1)^3<0$, we conclude $\tilde{L}_1=-(h-1)^2 (h^2 k-h k+h+1)^3\tilde{L}_{11}(\sigma_2)<0$.  We claim that $\tilde{L}_2<0$. In fact, let the last factor of $\tilde{L}_2$ be $\tilde{L}_{21}(\sigma_2)$ and obtain $\tilde{L}_{21}(\sigma_2)>0$, which can derive $\tilde{L}_2=(h-1) (h^2 k-h k+h+1)^2\tilde{L}_{21}(\sigma_2)<0$. In the same manner, we can see that $\tilde{L}_3<0$.
Consequently, we can assert that $\tilde{G}(\rho)<0$ for $\rho>0$, namely that $G(\sigma)<0$ for $(h, k,\sigma)\in\{(h, k,\sigma)\in\mathbb{R}_+^3:h<1, k<k_1, \sigma>\sigma_2\}$.

We obtain the desired conclusion that the first order focal value $g$ is negative for $0<h<1$ and
$(k, \sigma, \alpha)\in\mathcal{D}$. Therefore, the equilibrium $E_1$ of system (\ref{(3)}) is a stable weak focus of multiplicity 1 and
at most one stable limit cycle arises near $E_1$ from Hopf bifurcation as $\alpha$ varies from $\alpha=\alpha_2$ to $\alpha>\alpha_2$.
\end{proof}

\section{Bogdanov-Takens bifurcation at $E_*$}
As presented before Theorem \ref{thm3}, $E_*(x_*, y_*)$ is degenerate with $D|_{E_*}=0$ and $T|_{E_*}=0$ for
$0<h<1$ and $(k, \sigma, \alpha)\in\mathcal{S}_{5}$ with $\sigma=\sigma_2$, where
$x_*=\frac{hk\sigma_2+k(1-h)}{k(1-h)^2+\sigma_2(h+1)}$ and $y_*=\frac{\sigma_2 k(h\sigma_2-h+1)\{(h-1)(hk-k+1)+\sigma_2\}}{\{k(1-h)^2+\sigma_2(1+h)\}^2}$. Since $\alpha_1=\alpha_2$ if $\sigma=\sigma_2$, we let $\alpha_*:=\alpha_1=\alpha_2$. In the section, we display that $E_*$ is
a cusp and the Bogdanov-Takens bifurcation may occur at $E_*$.
\begin{lemma}
For $0<h<1$ and
$(k, \sigma, \alpha)\in\mathcal{S}_{5}$ with $\sigma=\sigma_2$, the equilibrium $E_*$ of system (\ref{(3)}) is a cusp.
\label{lem2}
\end{lemma}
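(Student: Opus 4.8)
The plan is to translate $E_*$ to the origin, observe that its linearization is a nonzero nilpotent matrix, normalize it to a single $2\times2$ Jordan block, reduce to the Bogdanov--Takens preliminary normal form, and then read off the quadratic coefficient that distinguishes a cusp from a more degenerate singularity.

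First I would fix $\sigma=\sigma_2$ and $\alpha=\alpha_*$ ($=\alpha_1=\alpha_2$) and translate $E_*:(x_*,y_*)$ to the origin, obtaining the planar system underlying (\ref{(19)}) with $\epsilon=0$; its linear part has matrix $\left(\begin{array}{cc}a_{100}&a_{010}\\b_{100}&b_{010}\end{array}\right)$. By the very definition of $\sigma_2$ one has $D|_{E_*}=T|_{E_*}=0$, so this matrix has a double zero eigenvalue; and since its $(2,1)$-entry $b_{100}=-ky_*(h-1)(\alpha_* y_*+1)$ is nonzero for $0<h<1$, it is a nonzero nilpotent, hence linearly conjugate to $\left(\begin{array}{cc}0&1\\0&0\end{array}\right)$. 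Carrying out this linear change of variables (an eigenvector spanning the kernel together with a generalized eigenvector) turns the system into $\dot\xi=\eta+p_2(\xi,\eta)+O(\parallel(\xi,\eta)\parallel^3)$, $\dot\eta=q_2(\xi,\eta)+O(\parallel(\xi,\eta)\parallel^3)$ with $p_2,q_2$ the induced quadratic forms.

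Next, the near-identity substitution absorbing $p_2$ into the second variable (equivalently, using the first equation to replace $\eta$) removes $p_2$ from the first equation without altering the $\xi^2$-coefficient of the second one at leading order, so the system reduces to
\begin{eqnarray*}
\dot{x}=y,\qquad \dot{y}=M x^2+N x y+O(\parallel(x,y)\parallel^3),
\end{eqnarray*}
where $M$ equals the $\xi^2$-coefficient of $q_2$. By the standard classification of planar nilpotent equilibria (see, e.g., \cite{Guckenheimer}, \cite{Carr}), the origin --- hence $E_*$ --- is a cusp precisely when $M\neq0$; the further nondegeneracy $N\neq0$ needed for codimension-two Bogdanov--Takens will be addressed in the next theorem.

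It remains to prove $M\neq0$ for all $(k,\sigma,\alpha)\in\mathcal{S}_5$ with $\sigma=\sigma_2$. Expressing $M$ through the Appendix coefficients $a_{ijk},b_{ijk}$ at $\epsilon=0$, I would substitute $x_*$ from (\ref{(012)}), invoke the branch relation $1+hx_*(1+\alpha_* y_*)=x_*(1+\alpha_* y_*)$ together with $y_*=\sigma_2 x_*(k-x_*)/k$, and then --- since $F'(x_*)=0$ and $f_2(\sigma_2)=0$ --- apply iterated pseudo-division (MAPLE's ``prem'') to strip off these relations. I expect this to collapse $M$, up to a manifestly nonzero factor, to a nonzero multiple of the cubic quantity $f_3(x_*)$ that already appeared in the proof of Theorem \ref{thm3}, so that $M\neq0$ follows from the inequality $f_3(x_*)>0$ established there. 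The main obstacle is purely the computational bookkeeping: the raw expression for $M$ is a high-degree polynomial in $h,k,\sigma_2$ involving the radicals defining $\sigma_2$ and $\alpha_*$, and turning it into a decidable sign condition requires careful repeated pseudo-division combined with the positivity bounds $0<k<k_1$ and $\sigma_2>1-h-k(1-h)^2>0$ obtained earlier.
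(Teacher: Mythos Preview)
Your overall strategy---nilpotent linearization, reduction to the Bogdanov--Takens preliminary normal form $\dot x=y,\ \dot y=Mx^2+Nxy+O(3)$, and verification that $M\neq0$---is correct and matches the paper's. The main difference is computational. The paper exploits the fact that at $\sigma=\sigma_2$ the trace vanishes, so $x_*$ coincides with the rational expression $x_0$ in (\ref{(013)}); this lets one write $x_*,y_*,\alpha_*$ explicitly as rational functions of $h,k,\sigma_2$ and compute $M=\mathcal{B}_{20}$ directly in those variables, after which the sign is read off by inspection (together with one pseudo-division by $f_2(\sigma_2)$ for $N=2\mathcal{A}_{20}+\mathcal{B}_{11}$). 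Your plan instead keeps $x_*$ implicit via (\ref{(012)}) and reduces by iterated pseudo-division against $F'(x_*)$ and $f_2(\sigma_2)$. That route is workable, but your expectation that $M$ collapses to a multiple of $f_3(x_*)$ from Theorem~\ref{thm3} is unsupported: $d_2(0)$ there arises from a one-dimensional center-manifold reduction (one eigenvalue nonzero), whereas $\mathcal{B}_{20}$ here comes from a different quadratic combination of the Taylor coefficients, and the two numerators do not have the same structure. So you should expect the pseudo-division to terminate in a sign-definite expression, but not necessarily in $f_3(x_*)$.

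One further point: you defer $N\neq0$ to Theorem~\ref{thm5}, which is legitimate for the topological statement ``$E_*$ is a cusp'' (only $M\neq0$ is needed). The paper, however, establishes $2\mathcal{A}_{20}+\mathcal{B}_{11}>0$ already in this lemma and then \emph{uses} it in the proof of Theorem~\ref{thm5} (to justify the shift eliminating the $v_1$-term and the final rescaling). If you postpone it, be sure to actually supply it there.
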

\begin{proof}
For $0<h<1$ and $(k, \sigma, \alpha)\in\mathcal{S}_{5}$ with $\sigma=\sigma_2$,  system (\ref{(3)}) can be transformed into the following form
by translating $E_*$ to the origin
\begin{eqnarray}
\left\{
\begin{array}{l}
\frac{dx}{dt}=A_{10} x+A_{01} y+A_{20} x^2+A_{11} x y+A_{02} y^2+O(\parallel(x,y)\parallel^3),\\
\frac{dy}{dt}=B_{10} x+B_{01} y+B_{11} x y+B_{02} y^2+O(\parallel(x,y)\parallel^3),
\end{array}
\right.
\label{(29)}
\end{eqnarray}
where the coefficients are given in the Appendix
with $\sigma_2$ given in (\ref{(7)}).
Using the linear transformation $x=-\frac{B_{01}}{B_{10}}u+v$ and $y=u$ combined with the time-rescaling $\tau:=B_{10}t$ to change system (\ref{(29)})
into the canonical form
\begin{eqnarray}
\left\{
\begin{array}{l}
\frac{du}{d\tau}=v+\mathcal{A}_{20} u^2+\mathcal{A}_{11} u v+O(\parallel(u,v)\parallel^3),\\
\frac{dv}{d\tau}=\mathcal{B}_{20} u^2+\mathcal{B}_{11} u v+\mathcal{B}_{02} v^2+O(\parallel(u,v)\parallel^3),
\end{array}
\right.
\label{(30)}
\end{eqnarray}
where
\begin{eqnarray*}
\begin{array}{l}
\mathcal{A}_{20}:=-\frac{B_{01} B_{11}-B_{02} B_{10}}{B_{10}^2},~~
\mathcal{A}_{11}:=\frac{B_{11}}{B_{10}},~~
\mathcal{B}_{11}:=\frac{A_{11} B_{10}-2 A_{20} B_{01}+B_{01} B_{11}}{B_{10}^2},
\mathcal{B}_{02}:=\frac{A_{20}}{B_{10}},\\
\mathcal{B}_{20}:=\frac{A_{02} B_{10}^2-A_{11} B_{01} B_{10}+A_{20} B_{01}^2-B_{01}^2 B_{11}+B_{01} B_{02} B_{10}}{B_{10}^3}.
\end{array}
\end{eqnarray*}
 By the near-identity transformation $u_1:=u$ and $v_1:=v+\mathcal{A}_{20} u^2+\mathcal{A}_{11} u v\cdots$, system (\ref{(30)}) can be written as the Kukles form
\begin{eqnarray}
\begin{array}{l}
\frac{du_1}{d\tau}=v_1,~
\frac{dv_1}{d\tau}=\mathcal{B}_{20} u_1^2+(2\mathcal{A}_{20}+\mathcal{B}_{11}) u_1 v_1+(\mathcal{A}_{11}+\mathcal{B}_{02}) v_1^2+O(\parallel(u_1,v_1)\parallel^3).
\end{array}
\label{(31)}
\end{eqnarray}
Using a further transformation $u_2:=u_1$ and $v_2:=v_1-(\mathcal{A}_{11}+\mathcal{B}_{02}) u_1v_1$ and the time-rescaling $t:=\{1+(\mathcal{A}_{11}+\mathcal{B}_{02})u_2\}\tau$ to eliminate the term of $v_1^2$ in (\ref{(31)}), the system can be changed into
\begin{eqnarray}
\begin{array}{l}
\frac{du_2}{dt}=v_2,~
\frac{dv_2}{dt}=\mathcal{B}_{20} u_2^2+(2\mathcal{A}_{20}+\mathcal{B}_{11}) u_2 v_2+O(\parallel(u_2,v_2)\parallel^3).
\end{array}
\label{(32)}
\end{eqnarray}
We can assert that the coefficients $\mathcal{B}_{20}$ and $2\mathcal{A}_{20}+\mathcal{B}_{11}$ are nonzero for $0<h<1$ and $0<k<k_1$. In fact,
we can obtain
\begin{eqnarray*}
\begin{array}{l}
\mathcal{B}_{20}=-\frac{\{(h+1) \sigma_2+k (h-1)^2\} \{h (h k-k+1)+1\}^2 \{(-2 h+1) \sigma_2+(h-1) (h k-k+3)\}}{k \{(h-1) (h k-k+1)+\sigma_2\}^3 (h-1) (h \sigma_2-h+1)},\\
2\mathcal{A}_{20}+\mathcal{B}_{11}=\frac{(h+1) \sigma_2+k (h-1)^2}{k \sigma_2 \{k (h-1)^2+h+\sigma_2-1\}^2 (1-h) (h \sigma_2-h+1)}g_1(\sigma_2)
\end{array}
\end{eqnarray*}
with
$g_1(\sigma_2):=\{h (h-1) (h^2+2 h-1) k+(h+1) (h^2+2 h-2)\} \sigma_2^2
+(h-1) \{h (h-1)^3 k^2-(4 h+3) (h-1) k-(h+4) (h+1)\} \sigma_2-k (h-1)^3 (h^2 k+h-k+2)$.
It is easy to check that $\mathcal{B}_{20}<0$. Since the pseudo remainder of $g_1(\sigma_2)$ divided by $f_2(\sigma_2)$ is
$2(1-h)\{h(hk-k+1)+1\}^2\{(h^2k(h-1)+(h+1)^2)\sigma_2+k(h-1)^2\}$, which is positive, we obtain $2\mathcal{A}_{20}+\mathcal{B}_{11}>0$.
Then, by the rescaling
$u_3:=(2\mathcal{A}_{20}+\mathcal{B}_{11})^2u_1/\mathcal{B}_{20}$, $v_3:=-(2\mathcal{A}_{20}+\mathcal{B}_{11})^3v_2/\mathcal{B}_{20}^2 $ and $\tau:=-\mathcal{B}_{20}t/(2\mathcal{A}_{20}+\mathcal{B}_{11})$
system (\ref{(32)}) becomes
\begin{eqnarray}
\begin{array}{l}
\frac{du_3}{d\tau}=v_3,~
\frac{dv_3}{d\tau}=u_3^2-u_3 v_3+O(\parallel(u_3,v_3)\parallel^3).
\end{array}
\label{(33)}
\end{eqnarray}
It follows by Theorem 8.4 of \cite{Kuznetsov} that $E_*$ is a cusp
of system (\ref{(3)}) for $0<h<1$ and $(k, \sigma, \alpha)\in\mathcal{S}_{5}$ with $\sigma=\sigma_2$. The proof of this lemma is completed.
\end{proof}

We proceed to display that the Bogdanov-Takens bifurcation may occur at $E_*$ in the following theorem.  We choose $\sigma$ and $\alpha$ as the
bifurcation parameters and unfold the Bogdanov-Takens normal form of codimension 2 when the parameters $(\sigma, \alpha)$ are perturbed near the point
$(\sigma_2, \alpha_*)$.
\begin{theorem}
For $0<h<1$ and
$(k, \sigma, \alpha)\in\mathcal{S}_{5}$ with $\sigma=\sigma_2$,  there is a neighborhood $U$ of the point $(\sigma_2, \alpha_*)$ in the $(\sigma, \alpha)$-space and
four curves
\begin{eqnarray*}
\begin{array}{l}
\mathcal{SN}^+:=\{(\sigma, \alpha)\in U:\alpha=\alpha_1, \sigma>\sigma_2\},~
\mathcal{SN}^-:=\{(\sigma, \alpha)\in U:\alpha=\alpha_1, \sigma<\sigma_2\},\\
\mathcal{H}:=\{(\sigma, \alpha)\in U:\alpha=\alpha_2, \sigma>\sigma_2\},~
\mathcal{HL}:=\{(\sigma, \alpha)\in U:\alpha=\alpha_3, \sigma>\sigma_2\}
\end{array}
\end{eqnarray*}
such that system (\ref{(3)}) undergoes a saddle-node bifurcation near $E_*$  as $(\sigma,\alpha)$ crossing $\mathcal{SN}^+\cup \mathcal{SN}^-$, a
Hopf bifurcation near  $E_*$  as $(\sigma,\alpha)$ crossing $\mathcal{H}$ and a
homoclinic bifurcation near  $E_*$  as $(\sigma,\alpha)$ crossing $\mathcal{HL}$, where
$
\alpha_3:=\alpha_*-\frac{\mu_{101}}{\mu_{110}}(\sigma-\sigma_2)-\{\frac{A(0,0)(\mu_{101}^2\mu_{120}-\mu_{101}\mu_{110}\mu_{111}+\mu_{102}\mu_{110}^2)}{A(0,0)\mu^3_{110}}+\frac{6(\mu_{101}\mu_{210}-\mu_{110}\mu_{201})^2}{25A(0,0)\mu^3_{110}}
\}(\sigma-\sigma_2)^2+O(|\sigma-\sigma_2|^3)
$
with $A(0,0)=\mathcal{B}_{20}$ and $\mu_{lij}$ displayed in the Appendix.
\label{thm5}
\end{theorem}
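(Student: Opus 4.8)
The plan is to take $\mu_1:=\sigma-\sigma_2$ and $\mu_2:=\alpha-\alpha_*$ as unfolding parameters and to show that the two-parameter family obtained from system (\ref{(3)}) by suspending $(\mu_1,\mu_2)$ is, after a finite chain of smooth coordinate and parameter changes, locally smoothly equivalent to the versal unfolding of the Bogdanov-Takens singularity. First I would translate to the origin the equilibrium that coincides with $E_*$ when $(\mu_1,\mu_2)=(0,0)$ and Taylor-expand the suspended system, obtaining a planar family $\dot x=\sum a_{ij}(\mu)x^iy^j$, $\dot y=\sum b_{ij}(\mu)x^iy^j$ whose terms of order zero in $(x,y)$ vanish at $\mu=0$. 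By Lemma~\ref{lem2} the linear part at $\mu=0$ is a single nilpotent Jordan block and the quadratic normal-form coefficients satisfy $\mathcal{B}_{20}\neq0$ and $2\mathcal{A}_{20}+\mathcal{B}_{11}\neq0$; these two non-degeneracy facts, already established there, are precisely what allows the reduction to be carried through with the parameters switched on.

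Next I would repeat, now retaining $(\mu_1,\mu_2)$, the transformations used in the proof of Lemma~\ref{lem2}: a linear change bringing the linear part of the $\mu=0$ system to Jordan form, a near-identity change removing the $v^2$-term, a further near-identity change together with a time rescaling eliminating the remaining redundant monomials, and finally a rescaling of the phase variables and of time. This brings the family first to a pre-normal form with double-zero linear part, whose Taylor coefficients in $u,v$ and in $(\sigma-\sigma_2,\alpha-\alpha_*)$ are the quantities $\mu_{lij}$ recorded in the Appendix (with $A(0,0)=\mathcal{B}_{20}$ the coefficient of $u^2$ at the bifurcation point), and then to the Bogdanov-Takens normal form
\[
\dot\eta_1=\eta_2,\qquad \dot\eta_2=\beta_1(\mu)+\beta_2(\mu)\eta_1+\eta_1^2-\eta_1\eta_2+O(\|(\eta_1,\eta_2)\|^3),\qquad \beta_1(0)=\beta_2(0)=0,
\]
the sign $-1$ of the $\eta_1\eta_2$-term being inherited from the cusp computation in Lemma~\ref{lem2}.

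The crucial point is then the \emph{transversality} of the unfolding, namely that $\mu\mapsto(\beta_1(\mu),\beta_2(\mu))$ is a local diffeomorphism at $\mu=0$, which amounts to
\[
\det\frac{\partial(\beta_1,\beta_2)}{\partial(\mu_1,\mu_2)}\bigg|_{\mu_1=\mu_2=0}\neq 0 .
\]
Expressing $\beta_1,\beta_2$ through the $\mu_{lij}$ reduces this Jacobian to a single rational function of $h$ and $k$, since $\sigma_2$ and $\alpha_*$ are themselves the algebraic functions of $h,k$ given in (\ref{(7)}). I would clear denominators so that the condition becomes the non-vanishing of a polynomial in $h,k$ on the region $\{0<h<1,\ 0<k<k_1\}$, and verify it with the complete discrimination system of polynomials (\cite{YangLu}), which rules out real zeros there. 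This is the step I expect to be the main obstacle, purely because of the size of the polynomials produced by the successive substitutions of $\sigma_2$ and $\alpha_*$; everything preceding it is bookkeeping already rehearsed in Lemma~\ref{lem2}.

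Once transversality holds, the standard Bogdanov-Takens theorem (\cite[Theorem 8.4]{Kuznetsov}, see also \cite[p.149]{Guckenheimer}) applies to the normalized family: in a neighbourhood of $(\beta_1,\beta_2)=(0,0)$ there are a saddle-node curve, a Hopf curve and a homoclinic curve, and their preimages under the diffeomorphism, rewritten in the original parameters, are the curves $\mathcal{SN}^{\pm}$, $\mathcal{H}$ and $\mathcal{HL}$. Consistency with the earlier sections pins down their equations: the saddle-node branch is $\alpha=\alpha_1$ by Theorem~\ref{thm3} (split into $\mathcal{SN}^{+}$ for $\sigma>\sigma_2$ and $\mathcal{SN}^{-}$ for $\sigma<\sigma_2$), the Hopf branch is $\alpha=\alpha_2$ by Theorem~\ref{thm4}, and the homoclinic branch $\alpha=\alpha_3$ is obtained by substituting the classical second-order expansion $\beta_1=-\frac{6}{25}\beta_2^2+o(\beta_2^2)$ of the homoclinic curve of the normal form back through the parameter change; carrying this substitution to order $(\sigma-\sigma_2)^2$ yields exactly the expression for $\alpha_3$ stated in the theorem. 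Intersecting these curves with the half-plane $\sigma>\sigma_2$ and with a small neighbourhood $U$ of $(\sigma_2,\alpha_*)$ produces the four arcs in the statement, and the bifurcations they carry are read directly off the normal form.
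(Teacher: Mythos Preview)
Your proposal is correct and follows essentially the same route as the paper: suspend the two parameters, repeat the Lemma~\ref{lem2} chain of transformations to reach the Bogdanov--Takens normal form, verify transversality of $(\epsilon_1,\epsilon_2)\mapsto(\beta_1,\beta_2)$ by showing the Jacobian polynomial has no zero on $0<h<1$, $0<k<k_1$ via the complete discrimination system, and then read off the four bifurcation curves. The only detail you do not mention explicitly is that the paper also checks $\mu_{110}\neq 0$ separately (again with a root-counting argument) in order to solve the homoclinic relation $\beta_1+\frac{6}{25}\beta_2^2+\cdots=0$ for $\epsilon_1$ as a function of $\epsilon_2$ via the implicit function theorem.
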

\begin{proof}
Let $\epsilon_1:=\alpha-\alpha_*$ and $\epsilon_2:=\sigma-\sigma_2$. For sufficiently small $|\epsilon_1|$ and $|\epsilon_2|$,
we can transform system (\ref{(3)}) into the following form by translating $E_*$ to the origin and using the same translation as (\ref{(30)})
\begin{eqnarray}
\left\{
\begin{array}{l}
\frac{dx}{dt}=E_{00}+E_{10} x+E_{01} y+E_{20} x^2+E_{11} x y+O(\parallel(x,y)\parallel^3),\\
\frac{dy}{dt}=F_{00}+F_{10} x+F_{01} y+F_{20} x^2+F_{11} x y+F_{02} y^2+O(\parallel(x,y)\parallel^3),
\end{array}
\right.
\label{(34)}
\end{eqnarray}
where the coefficients are  given in the Appendix.
 With the change of variables $(x, y)\rightarrow(u_1, v_1)$, where $u_1:=x$ and $v_1$ denotes the right side of
the first equation in (\ref{(34)}), system (\ref{(34)}) can be written as the Kukles form,  whose
second order truncation is the following form
\begin{eqnarray}
\left\{
\begin{array}{l}
\frac{du_1}{dt}=v_1,\\
\frac{dv_1}{dt}=\mathcal{F}_{00}(\epsilon_1, \epsilon_2)+\mathcal{F}_{10}(\epsilon_1, \epsilon_2) u_1+\mathcal{F}_{01}(\epsilon_1, \epsilon_2) v_1+\mathcal{F}_{20}(\epsilon_1, \epsilon_2) u_1^2+\mathcal{F}_{11}(\epsilon_1, \epsilon_2) u_1v_1+\mathcal{F}_{02}(\epsilon_1, \epsilon_2) v_1^2,
\end{array}
\right.
\label{(35)}
\end{eqnarray}
where the coefficients are given in the Appendix.
Since
$\mathcal{F}_{11}(0, 0)=2\mathcal{A}_{20}+\mathcal{B}_{11}>0$,
 we can use a parameter-dependent shift $u_2:=u_1+\frac{\mathcal{F}_{01}(\epsilon_1, \epsilon_2)}{\mathcal{F}_{11}(\epsilon_1, \epsilon_2)}$ and $v_2:=v_1$
to vanish the term proportional to $v_1$ in the second equation of system (\ref{(35)}), which
leads to the following system
\begin{eqnarray}
\left\{
\begin{array}{l}
\frac{du_2}{dt}=v_2,\\
\frac{dv_2}{dt}=\frac{\mathcal{F}_{00}(\epsilon_1, \epsilon_2)\mathcal{F}^2_{11}(\epsilon_1, \epsilon_2)+\mathcal{F}^2_{01}(\epsilon_1, \epsilon_2)\mathcal{F}_{20}(\epsilon_1, \epsilon_2)-\mathcal{F}_{01}(\epsilon_1, \epsilon_2)\mathcal{F}_{10}(\epsilon_1, \epsilon_2)\mathcal{F}_{11}(\epsilon_1, \epsilon_2)}{\mathcal{F}^2_{11}(\epsilon_1, \epsilon_2)}\\
\phantom{\frac{dv_2}{dt}=}
-\frac{2\mathcal{F}_{01}(\epsilon_1, \epsilon_2)\mathcal{F}_{20}(\epsilon_1, \epsilon_2)-\mathcal{F}_{10}(\epsilon_1, \epsilon_2)\mathcal{F}_{11}(\epsilon_1, \epsilon_2)}{\mathcal{F}_{11}(\epsilon_1, \epsilon_2)}u_2+\mathcal{F}_{20}(\epsilon_1, \epsilon_2) u_2^2+\mathcal{F}_{11}(\epsilon_1, \epsilon_2) u_2v_2+\mathcal{F}_{02}(\epsilon_1, \epsilon_2) v_2^2.
\end{array}
\right.
\label{(36)}
\end{eqnarray}
Using the near-identity transformation $u_3:=u_2$, $v_3:=v_2-\mathcal{F}_{02}(\epsilon_1, \epsilon_2)u_2v_2$ and time-rescaling $\tau:=(1+\mathcal{F}_{02}(\epsilon_1, \epsilon_2)u_3)t$, system (\ref{(36)}) can be changed into
\begin{eqnarray}
\begin{array}{l}
\frac{du_3}{d\tau}=v_3,~
\frac{dv_3}{d\tau}=\mu_1(\epsilon_1, \epsilon_2)+\mu_2(\epsilon_1, \epsilon_2)u_3+A(\epsilon_1, \epsilon_2)u_3^2+B(\epsilon_1, \epsilon_2)u_3v_3,
\end{array}
\label{(37)}
\end{eqnarray}
where
\begin{eqnarray*}
\begin{array}{l}
\mu_1(\epsilon_1, \epsilon_2):=\frac{\mathcal{F}_{00}(\epsilon_1, \epsilon_2)\mathcal{F}^2_{11}(\epsilon_1, \epsilon_2)+\mathcal{F}^2_{01}(\epsilon_1, \epsilon_2)\mathcal{F}_{20}(\epsilon_1, \epsilon_2)-\mathcal{F}_{01}(\epsilon_1, \epsilon_2)\mathcal{F}_{10}(\epsilon_1, \epsilon_2)\mathcal{F}_{11}(\epsilon_1, \epsilon_2)}{\mathcal{F}^2_{11}(\epsilon_1, \epsilon_2)},\\
\mu_2(\epsilon_1, \epsilon_2):=-2\mathcal{F}_{02}(\epsilon_1, \epsilon_2)\mu_1(\epsilon_1, \epsilon_2)-\frac{2\mathcal{F}_{01}(\epsilon_1, \epsilon_2)\mathcal{F}_{20}(\epsilon_1, \epsilon_2)-\mathcal{F}_{10}(\epsilon_1, \epsilon_2)\mathcal{F}_{11}(\epsilon_1, \epsilon_2)}{\mathcal{F}_{11}(\epsilon_1, \epsilon_2)},\\
A(\epsilon_1, \epsilon_2):=2\mathcal{F}^2_{02}(\epsilon_1, \epsilon_2)\mu_1(\epsilon_1, \epsilon_2)+\mathcal{F}_{20}(\epsilon_1, \epsilon_2)
+2\mathcal{F}_{02}(\epsilon_1, \epsilon_2)
\frac{2\mathcal{F}_{01}(\epsilon_1, \epsilon_2)\mathcal{F}_{20}(\epsilon_1, \epsilon_2)-\mathcal{F}_{10}(\epsilon_1, \epsilon_2)\mathcal{F}_{11}(\epsilon_1, \epsilon_2)}{\mathcal{F}_{11}(\epsilon_1, \epsilon_2)},\\
B(\epsilon_1, \epsilon_2):=\mathcal{F}_{11}(\epsilon_1, \epsilon_2).
\end{array}
\end{eqnarray*}
We can check that
$A(0, 0)=\mathcal{B}_{20}<0$ and $B(0, 0)=2\mathcal{A}_{20}+\mathcal{B}_{11}>0$.
Thus,  by the rescaling
$u_4:=B^2(\epsilon_1, \epsilon_2)u_3/A(\epsilon_1, \epsilon_2)$, $v_4:=-B^3(\epsilon_1, \epsilon_2)v_3/A^2(\epsilon_1, \epsilon_2)$ and $t:=-A(\epsilon_1, \epsilon_2)\tau/B(\epsilon_1, \epsilon_2)$ system (\ref{(37)}) can be changed into
\begin{eqnarray}
\begin{array}{l}
\frac{du_4}{dt}=v_4,~
\frac{dv_4}{dt}=\beta_1(\epsilon_1, \epsilon_2)+\beta_2(\epsilon_1, \epsilon_2)u_4+u_4^2-u_4v_4,
\end{array}
\label{(38)}
\end{eqnarray}
where
\begin{eqnarray}
\begin{array}{l}
\beta_1(\epsilon_1, \epsilon_2):=\frac{B^4(\epsilon_1, \epsilon_2)}{A^3(\epsilon_1, \epsilon_2)}\mu_1(\epsilon_1, \epsilon_2),~~
\beta_2(\epsilon_1, \epsilon_2):=\frac{B^2(\epsilon_1, \epsilon_2)}{A^2(\epsilon_1, \epsilon_2)}\mu_2(\epsilon_1, \epsilon_2).
\end{array}
\label{(39)}
\end{eqnarray}
Because the coefficients $E_{00}$, $E_{10}$, $F_{00}$, $F_{10}$ and $F_{01}$ in system (\ref{(34)}) are equal to zero if $\epsilon_1=0$ and $\epsilon_2=0$,
we can check $\mu_1(0, 0)=0$ and $\mu_2(0, 0)=0$. Consequently, we conclude that $\beta_1(0, 0)=0$ and $\beta_2(0, 0)=0$.
Moreover, the Jacobian  determinant of (\ref{(39)}) at $(0,0)$ is given by
$$
\begin{vmatrix}
\frac{\partial\beta_1(\epsilon_1, \epsilon_2)}{\partial\epsilon_1}&\frac{\partial\beta_1(\epsilon_1, \epsilon_2)}{\partial\epsilon_2}\\
\frac{\partial\beta_2(\epsilon_1, \epsilon_2)}{\partial\epsilon_1}&\frac{\partial\beta_2(\epsilon_1, \epsilon_2)}{\partial\epsilon_2}
\end{vmatrix}_{(\epsilon_1, \epsilon_2)=(0, 0)}
=\frac{B^6(0,0)}{A^5(0,0)}\frac{k(h-1)^2g_2(k)+g_3(k)\sigma_2}{(-h^3k+2h^2k-h^2-hk+h+2)^4},
$$
where
\begin{eqnarray*}
\begin{array}{l}
g_2(k):=2 h^5 (h-1)^5 k^4+(9 h^6+5 h^5-13 h^4-6 h^3-h^2+3 h-1) (h-1)^3 k^3+(15 h^6+28 h^5-20 h^4-60 h^3\\
\phantom{g_2(k):=}-31 h^2+24 h-8) (h-1)^2 k^2+(h-1) (11 h^3-34 h+1) (h+1)^3 k+3 (h-2) (h+2) (h+1)^4,\\
g_3(k):=2 h^6 (h-1)^6 k^5+2 h^5 (8 h^2+3 h-13) (h-1)^4 k^4+(45 h^7+74 h^6-64 h^5-125 h^4-5 h^3-4 h^2\\
\phantom{g_2(k):=}+4 h-1) (h-1)^3 k^3+(59 h^4-10 h^3-143 h^2+40 h-6) (h-1)^2 (h+1)^3 k^2\\
\phantom{g_2(k):=}+(h-1) (37 h^3-2 h^2-118 h+11) (h+1)^4 k+9 (h-2) (h+2) (h+1)^5.
\end{array}
\end{eqnarray*}
We utilize the theory of complete discrimination system for parametric polynomials in \cite{YangLu} to
determine the number of real roots of $g_2(k)$ and $g_3(k)$ in the interval $k\in(0, k_1)$ with $0<h<1$.
Let $k=\frac{1}{(1-h)(1+x^2)}$, the number of real roots for $g_2(k)$ in the interval $(0, k_1)$ is equal to the half number of that for $\tilde{g}_2(x)$ on the total real axis, where
\begin{eqnarray*}
\begin{array}{l}
\tilde{g}_2(x):=3(h-2)(h+2)(h+1)^4x^8+(h^3+12h^2-14h-49)(h+1)^3x^6+(h^5+19h^4\\
\phantom{g_2(k):=}-6h^3-148h^2-171h-83)x^4+(12h^3-40h^2-54h-66)x^2+4(h-5).
\end{array}
\end{eqnarray*}
The discriminant sequence of $\tilde{g}_2(x)$ is
$D:=\{D_1,D_2,D_3,D_4,D_5,D_6,D_7,D_8\}$,
where
\begin{eqnarray*}
\begin{array}{l}
D_1:=(h-2)^2 (h+2)^2 (h+1)^8,\\
D_2:=(2-h) (h+2) (h+1)^7 (h^3+12 h^2-14 h-49) D_1,\\
D_3:=(h+1)^4 D_2 D_{31},\\
D_4:=(h-2) (h+2) (h+1)^{12} D_1 D_{31} D_{41},\\
D_5:=(h-2) (h+2) (h+1)^{12} D_1 D_{41} D_{51},\\
D_6:=(2-h) (h+2) (h+1)^{12} D_1 D_{61} D_{51},\\
D_7:=(2 h-1)^2 (2-h) (h+2) (h+1)^{12} D_1 D_{61} D_{71},\\
D_8:=(-5+h) (h-2) (h+2) (2 h-1)^4 (h+1)^{12} D_1 D_{71}^2
\end{array}
\end{eqnarray*}
with $D_{31}$, $D_{41}$, $D_{51}$, $D_{61}$ and $D_{71}$ listed in the Appendix.
It is obvious that $\mbox{sign}(D_1)=1$ and $\mbox{sign}(D_2)=-1$ for $0<h<1$. To discuss the sign of $D_i$ ($i=3\cdots8$),  we begin by considering the zeros of the single-variable function
$D_{j}$ ($j=31, 41, 51, 61, 71$). Using the Maple command $``realroot(D_{j}, 0.000001)"$ to isolate the real roots of $D_{j}$ in the interval $(0,1)$. We
see that $D_{31}$ has exactly one real root $h_1$ covered by
$I_1:=[5686495/16777216, 177703/524288]$.
By computing $D_{31}$ at the endpoints of $I_1$, i.e., $D_{31}(5686495/16777216)<0$ and   $D_{31}(177703/524288)>0$, we obtain the sign of $D_{31}$ as follows. $D_{31}<0$ if $h\in(0,h_1)$, $D_{31}=0$ if $h=h_1$ and $D_{31}>0$ if $h\in(h_1,1)$.
$D_{41}$ has exactly one real root $h_2$ covered by
$I_2:=[5448295/16777216, 681037/2097152]$.
Since $D_{41}(5448295/16777216)>0$ and $D_{41}(681037/2097152)<0$, the sign of $D_{41}$ is that $D_{41}>0$ if $h\in(0,h_2)$, $D_{41}=0$ if $h=h_2$ and $D_{41}<0$ if $h\in(h_2,1)$.
$D_{51}$ has three real roots $h_3$, $h_4$ and $h_5$ covered by
$I_3:=[3141071/8388608, 6282143/16777216]$, $I_4:=[4311003/8388608, 1077751/2097152]$ and $I_5:=[6104019/8388608, 1526005/2097152]$ respectively.
Similarly, by computing the sign of $D_{51}$ at the endpoints of the intervals $I_3$, $I_4$ and $I_5$
we obtain $D_{51}<0$ if $h\in(0,h_3)\cup(h_4,h_5)$, $D_{51}=0$ if $h=h_3\cup h_4\cup h_5$ and $D_{51}>0$ if $h\in(h_3,h_4)\cup(h_5,1)$.
$D_{61}$ also has exactly three real roots $h_6$, $h_7$ and $h_8$ covered by
$I_6:=[6273181/16777216, 3136591/8388608]$, $I_7:=[4310379/8388608, 1077595/2097152]$ and $I_8:=[6129165/8388608, 3064583/4194304]$ respectively.
We conclude similarly that $D_{61}>0$ if $h\in(0,h_6)\cup(h_7,h_8)$, $D_{61}=0$ if $h=h_6\cup h_7\cup h_8$ and $D_{61}<0$ if $h\in(h_6,h_7)\cup(h_8,1)$.
$D_{71}$ has exactly three real roots $h_9$, $h_{10}$ and $h_{11}$ covered by
$I_9:=[3160567/8388608, 6321135/16777216]$, $I_{10}:=[4283093/8388608, 2141547/4194304]$ and $I_{11}:=[6960901/8388608, 3480451/4194304]$ respectively.
The computation yields that $D_{71}<0$ if $h\in(0,h_9)\cup(h_{10},h_{11})$, $D_{71}=0$ if $h=h_9\cup h_{10}\cup h_{11}$ and $D_{71}>0$ if $h\in(h_9,h_{10})\cup(h_{11},1)$.
Furthermore, we divide interval $(0,1)$ into 13 open subintervals and 12 single points, which are arranged in order as follows by comparing the endpoints of $I_i$ $(i=1, 2\cdots 11)$
\begin{eqnarray*}
\begin{array}{l}
(0,1)=
(0, h_2)\cup h_2\cup (h_2,h_1)\cup h_1\cup (h_1,h_6)\cup h_6\cup (h_6, h_3)\cup h_3\cup(h_3,h_9)\cup h_9\cup(h_9,\frac{1}{2})\cup \frac{1}{2}\cup(\frac{1}{2},h_{10})\\
\phantom{(0,1)=}\cup h_{10}\cup
(h_{10},h_7)\cup h_7\cup (h_7,h_4)\cup h_4\cup(h_4,h_5)\cup h_5\cup(h_5,h_8)\cup h_8\cup(h_8,h_{11})\cup h_{11}\cup(h_{11},1).
\end{array}
\end{eqnarray*}
Consequently, the sign of $D_i$ ($i=3\cdots8$) is displayed as follows
\begin{eqnarray*}
\begin{array}{l}
D_{3}>0, ~\mbox{if}~h\in(0,h_1);~~D_{3}=0, ~\mbox{if}~h=h_1; ~~D_{3}<0, ~\mbox{if}~h\in(h_1,1);\\
D_{4}>0, ~\mbox{if}~h\in(0,h_2)\cup(h_1,1);~~D_{4}=0, ~\mbox{if}~h=h_2\cup h_1; ~~D_{4}<0, ~\mbox{if}~h\in(h_2,h_1);\\
D_{5}>0, ~\mbox{if}~h\in(0,h_2)\cup(h_3,h_4)\cup(h_5,1);~~D_{5}=0, ~\mbox{if}~h=h_2\cup h_3\cup h_4\cup h_5;\\
D_{5}<0, ~\mbox{if}~h\in(h_2,h_3)\cup(h_4,h_5);
~~D_{6}<0, ~\mbox{if}~h\in(0,h_6)\cup(h_3,h_7)\cup(h_4,h_5)\cup(h_8,1);\\
D_{6}=0, ~\mbox{if}~h=h_6\cup h_3\cup h_7\cup h_4\cup h_5\cup h_8; ~~D_{6}>0, ~\mbox{if}~h\in(h_6,h_3)\cup(h_7,h_4)\cup(h_5,h_8);\\
D_{7}<0, ~\mbox{if}~h\in(0,h_6)\cup(h_9,\frac{1}{2})\cup(\frac{1}{2},h_{10})\cup(h_7,h_8)\cup(h_{11},1);\\
D_{7}=0, ~\mbox{if}~h=h_6\cup h_9\cup \frac{1}{2}\cup h_{10}\cup h_7\cup h_8\cup h_{11};~~
D_{7}>0, ~\mbox{if}~h\in(h_6,h_9)\cup(h_{10},h_7)\cup(h_8,h_{11});\\
D_{8}>0, ~\mbox{if}~h\in(0,h_9)\cup(h_9,\frac{1}{2})\cup(\frac{1}{2},h_{10})\cup(h_{10},h_{11})\cup(h_{11},1);~~
D_{8}=0, ~\mbox{if}~h=h_9\cup \frac{1}{2}\cup h_{10}\cup h_{11}.
\end{array}
\end{eqnarray*}
The sign lists of the discriminant sequence $D$ are given as follows
\begin{alignat*}{4}
&[1,-1,1,1,1,-1,-1,1], ~&h&\in(0, h_2),\\
&[1,-1,1,0,0,-1,-1,1], ~&h&=h_2,\\
&[1,-1,1,-1,-1,-1,-1,1], ~&h&\in(h_2,h_1),\\
&[1,-1,0,0,-1,-1,-1,1], ~&h&=h_1,\\
&[1,-1,-1,1,1,0,0,1], ~&h&\in h_7\cup h_8,\\
&[1,-1,-1,1,-1,0,0,1], ~&h&=h_6,\\
&[1,-1,-1,1,-1,1,1,1], ~&h&\in(h_6,h_3),\\
&[1,-1,-1,1,0,0,1,1], ~&h&=h_3,\\
&[1,-1,-1,1,1,-1,1,1], ~&h&\in(h_3,h_9)\cup(h_{10},h_7)\cup(h_8,h_{11}),\\
&[1,-1,-1,1,1,-1,0,0], ~&h&\in h_9\cup\frac{1}{2}\cup h_{10}\cup h_{11},\\
&[1,-1,-1,1,1,-1,-1,1], ~&h&\in(h_9,\frac{1}{2})\cup(\frac{1}{2},h_{10})\cup(h_{11},1),\\
&[1,-1,-1,1,-1,-1,-1,1], ~&h&\in(h_1,h_6)\cup(h_4,h_5),\\
&[1,-1,-1,1,1,1,-1,1], ~&h&\in(h_7,h_4)\cup(h_5,h_8),\\
&[1,-1,-1,1,0,0,-1,1], ~&h&\in h_4\cup h_5.
\end{alignat*}
For $h\in h_2\cup h_1\cup h_6\cup h_3\cup h_7\cup h_8\cup h_4\cup h_5$, the revised sign lists are
\begin{alignat*}{4}
&[1,-1,1,-1,-1,-1,-1,1], ~&h&=h_2,~&&[1,-1,1,1,-1,-1,-1,1], \quad&h&=h_1,\\
&[1,-1,-1,1,-1,1,1,1], ~&h&=h_6,~&&[1,-1,-1,1,-1,-1,1,1], ~&h&=h_3,\\
&[1,-1,-1,1,1,-1,-1,1], ~&h&\in h_7\cup h_8,~&&[1,-1,-1,1,-1,-1,-1,1],~&h&\in h_4\cup h_5.
\end{alignat*}
Thus, the change number of the sign lists of the
discriminant sequence $D$ is 4 and the number
of the non-vanishing numbers of these lists is 8 for $h\in(0,1)/(h_9\cup\frac{1}{2}\cup h_{10}\cup h_{11})$
and the change number is 3 and the number
of the non-vanishing numbers is 6 for $h\in h_9\cup\frac{1}{2}\cup h_{10}\cup h_{11}$. By Theorem 2.1 in \cite{YangLu}, $\tilde{g}_2(x)$ has no root on the total real axis. Hence,  $g_2(k)$ has no real root in the interval $(0, k_1)$. In addition,  $g_2(k)$ is an even function and  $g_2(0)<0$, then we obtain $g_2(k)<0$ for $0<k<k_1$ and $0<h<1$.

Similarly, by using the complete discrimination system of polynomial we can also obtain $g_3(k)<0$ for $0<k<k_1$ and $0<h<1$.
Hence, we conclude that the Jacobian determinant is nonzero, i.e.,  (\ref{(39)}) is locally invertible.
System (\ref{(38)}) therefore is locally equivalent to the universal unfolding system
\begin{eqnarray}
\begin{array}{l}
\frac{dx}{dt}=y,~~
\frac{dy}{dt}=\xi_1+\xi_2x+x^2-xy.
\end{array}
\label{(40)}
\end{eqnarray}
As indicated  in Section 8.4 of \cite{Kuznetsov},  system (\ref{(40)}) undergoes a saddle-node bifurcation as $(\xi_1,\xi_2)$ crossing $\mathcal{SN}^+\cup\mathcal{SN}^-$, where $\mathcal{SN}^+:=\{(\xi_1,\xi_2)\in U:\xi_1=\xi_2^2/4, \xi_2>0\}$ and $\mathcal{SN}^-:=\{(\xi_1,\xi_2)\in U:\xi_1=\xi_2^2/4, \xi_2<0\}$, a Hopf bifurcation as $(\xi_1,\xi_2)$ crossing $\mathcal{H}$, where $\mathcal{H}:=\{(\xi_1,\xi_2)\in U:\xi_1=0, \xi_2<0\}$ and a homoclinic bifurcation as $(\xi_1,\xi_2)$ crossing $\mathcal{HL}$, where $\mathcal{HL}:=\{(\xi_1,\xi_2)\in U:\xi_1=-6\xi_2^2/25+O(|\xi_2|^3), \xi_2<0\}$.

In what follows, we only need to present the bifurcation curve $\mathcal{HL}$ in terms of $\epsilon_1$ and $\epsilon_2$ because the bifurcation curves $\mathcal{SN}$ and $\mathcal{H}$ have already been shown in Theorem \ref{thm3} and Theorem \ref{thm4}.
For convenience, we denote
$\mu_{lij}:=\partial^{i+j}\mu_l(0,0)/\partial^i\epsilon_1\partial^j\epsilon_2$,
$l=1,2$ and $i,j=0,1,2$  given in the Appendix.
We can solve $\epsilon_1$ and $\epsilon_2$ from (\ref{(39)}) as follows
\begin{eqnarray*}
\begin{array}{l}
\epsilon_1=\frac{A^2(0,0)}{B^2(0,0)}\frac{(-h^3k+2h^2k-h^2-hk+h+2)^4}{k(h-1)^2g_2(k)+g_3(k)\sigma_2}
(\frac{A(0,0)}{B^2(0,0)}\mu_{202}\beta_1-\mu_{102}\beta_2)
+O(\parallel(\beta_1,\beta_2)\parallel^2),\\
\epsilon_2=\frac{A^2(0,0)}{B^2(0,0)}\frac{(-h^3k+2h^2k-h^2-hk+h+2)^4}{k(h-1)^2g_2(k)+g_3(k)\sigma_2}
(-\frac{A(0,0)}{B^2(0,0)}\mu_{201}\beta_1+\mu_{101}\beta_2)
+O(\parallel(\beta_1,\beta_2)\parallel^2).
\end{array}
\end{eqnarray*}
Before expressing the bifurcation curve we need to prove $\mu_{110}\neq0$ for $0<h<1$ and $0<k<k_1$, where $\mu_{110}$ is given in the Appendix and has the same sign as
$\tilde{\mu}_{110}:=\zeta_1(k)\sigma_2+\zeta_2(k)$
with
\begin{eqnarray*}
\begin{array}{l}
\zeta_1(k):=2h^4(h-1)^3k^3+(7h^4+12h^3+6h^2-4h+1)(h-1)^2k^2+8h(h-1)(h+1)^3k+3(h+1)^4,\\
\zeta_2(k):=k(h-1)^2\{(h-1)(h^3+3h^2-3h+1)k+(h+1)^3\}(hk-k+1).
\end{array}
\end{eqnarray*}
It is easy to obtain $\zeta_2(k)>0$ for $0<h<1$ and $0<k<k_1$. We next prove $\zeta_1(k)>0$ for $0<h<1$ and $0<k<k_1$.
Let $k=\frac{1}{(1-h)(1+x^2)}$, the number of real roots for $\zeta_1(k)$ in the interval $(0, k_1)$ is equal to the half number of that for $\tilde{\zeta}_1(k)$ on the total real axis (\cite{YangLu}), where
$\tilde{\zeta}_1(k):=3(h+1)^4x^6+(h+9)(h+1)^3x^4+2(6h^2+8h+5)x^2+4$.
Obviously, $\tilde{\zeta}_1(k)$ has no real root, implying that $\zeta_1(k)$ has no root in the interval $(0, k_1)$. In addition, $\zeta_1(k)$ is an odd function and $\zeta_1(0)>0$, we obtain $\zeta_1(k)>0$ for $0<h<1$ and $0<k<k_1$.

For the bifurcation curve $\mathcal{HL}$, we consider $\Xi:=\beta_1+\frac{6}{25}\beta_2^2+O(|\beta_2|^3)$.
Since $\frac{\partial\Xi}{\partial\epsilon_1}=\frac{B^4(0,0)}{A^3(0,0)}\mu_{110}\neq0$,
by the implicit function theorem,
there exists a unique function $\epsilon_1(\epsilon_2)$ such that $\epsilon_1(0)=0$ and $\Xi(\epsilon_1(\epsilon_2),\epsilon_2)=0$, which can be obtained as an expansion
\begin{eqnarray*}
\begin{array}{l}
\epsilon_1(\epsilon_2)=-\frac{\mu_{101}}{\mu_{110}}\epsilon_2
-\frac{25A(0,0)(\mu_{101}^2\mu_{120}-\mu_{101}\mu_{110}\mu_{111}+\mu_{102}\mu_{110}^2)+6(\mu_{101}\mu_{210}-\mu_{110}\mu_{201})^2}{25A(0,0)\mu^3_{110}}\epsilon_2^2+O(|\epsilon_2|^3).
\end{array}
\end{eqnarray*}
Further, on the curve $\Xi=0$, we have
$\epsilon_2=-\frac{A^2(0,0)\mu_{110}}{B^2(0,0)(\mu_{101}\mu_{210}-\mu_{110}\mu_{201})}\beta_2+O(|\beta_2|^2)$,
in which  the coefficient of $\beta_2$ is negative, implying that
 $\epsilon_2>0$ if $\beta_2<0$ and $\epsilon_2<0$ if $\beta_2>0$. Therefore, we obtain
\begin{eqnarray*}
\begin{array}{l}
\mathcal{HL}:=\{(\epsilon_1, \epsilon_2)\in U:\epsilon_1=-\frac{\mu_{101}}{\mu_{110}}\epsilon_2-\{\frac{A(0,0)(\mu_{101}^2\mu_{120}-\mu_{101}\mu_{110}\mu_{111}+\mu_{102}\mu_{110}^2)}{A(0,0)\mu^3_{110}}+\frac{6(\mu_{101}\mu_{210}-\mu_{110}\mu_{201})^2}{25A(0,0)\mu^3_{110}}
\}\epsilon_2^2\\
\phantom{\mathcal{HL}:=}+O(|\epsilon_2|^3), \epsilon_2>0\}.
\end{array}
\end{eqnarray*}
With the transformation $\epsilon_1=\alpha-\alpha_1$ and $\epsilon_2=\sigma-\sigma_2$, we can rewrite the above bifurcation curve $\mathcal{HL}$ as in Theorem \ref{thm5}. The proof of this theorem is completed.
\end{proof}

\section{Numerical simulation and discussion}
In this paper we qualitatively investigate prey-predator system (\ref{(2)}) with foraging facilitation among predators including the number and properties of the equilibria (Theorem \ref{thm1}) as well as the bifurcations of equilibria such as transcritical and pitchfork bifurcations (Theorem \ref{thm2}), saddle-node bifurcation (Theorem \ref{thm3}), Hopf bifurcation (Theorem \ref{thm4}) and Bogdanov-Takens bifurcation (Theorem \ref{thm5}).
In spite that both the saddle-node and Hopf bifurcations are discussed above, they are also exhibited in the Bogdanov-Takens bifurcation.
As indicated in Theorem \ref{thm5}, the neighborhood $U$ of point $(\sigma_2,\alpha_*)$ is divided into four regions, i.e., $U=\mathcal{SN}^+\cup\mathcal{SN}^-\cup\mathcal{H}\cup\mathcal{HL}\cup\mathcal{R}_1\cup\mathcal{R}_2\cup\mathcal{R}_3\cup\mathcal{R}_4$, where
\begin{eqnarray*}
\begin{array}{l}
\mathcal{R}_1:=\{(\sigma, \alpha)\in U: \alpha<\alpha_1\},\\
\mathcal{R}_2:=\{(\sigma, \alpha)\in U:
\alpha_1<\alpha<
\alpha_2, \sigma>\sigma_2
\},\\
\mathcal{R}_3:=\{(\sigma, \alpha)\in U:
\alpha_2<\alpha<
\alpha_3, \sigma>\sigma_2
\},\\
\mathcal{R}_4:=\{(\sigma, \alpha)\in U:
\alpha>\alpha_1, \sigma\leq\sigma_2
\}
\cup\{(\sigma, \alpha)\in U:
\alpha>\alpha_3, \sigma>\sigma_2
\}.
\end{array}
\end{eqnarray*}
Accordingly, the dynamical behaviors of system (\ref{(3)}) near the cusp $E_*$ for parameters in neighborhood $U$ of point $(\sigma_2,\alpha_*)$ are listed in Table 2.
\begin{table}[h]
\tbl{Dynamical behaviors near $E_*$.}
{\begin{tabular}{l l l l}\\[-2pt]
\toprule
$(h, k)$ & $(\sigma,\alpha)$ & Equilibria and properties  & Closed orbits and homoclinic orbits \\[6pt]
\hline\\[-2pt]
{} & $\mathcal{R}_{1}$ & No equilibria  & No \\[1pt]
{} & $\mathcal{SN}^+\cup\mathcal{SN}^-$
& $E_*$(saddle node) & No \\[1pt]
{} & $\mathcal{R}_{2}$
& $E_1$(stable focus or node) $E_2$(saddle) & No \\[1pt]
$(0, 1)\times(0,k_1)$ & $\mathcal{H}$
& $E_1$(stable weak focus) $E_2$(saddle) & No\\[1pt]
{} &$\mathcal{R}_3$
& $E_1$(unstable focus)$E_2$(saddle) & A stable limit cycle\\[1pt]
{} & $\mathcal{HL}$
& $E_1$(unstable focus) $E_2$(saddle) & A homoclinic orbit\\[1pt]
{} & $\mathcal{R}_{4}$
& $E_1$(unstable focus or node) $E_2$(saddle) & No \\[1pt]
{} & $(\sigma_2,\alpha_*)$
& $E_*$(cusp) & No\\[1pt]
\botrule
\end{tabular}}
\end{table}
We next offer some examples to demonstrate the dynamical behaviors of system (\ref{(3)}). Let $h=0.5$ and $k=1$, we have $\sigma_2=0.55$, $\alpha_*=15.94$ and $E_*=(0.72, 0.11)$. The bifurcation diagram of the Bogdanov-Takens bifurcation is displayed in Fig. \ref{Figure 2}.
\begin{figure}[h]
\begin{center}
\includegraphics[width=8cm,height=6cm]{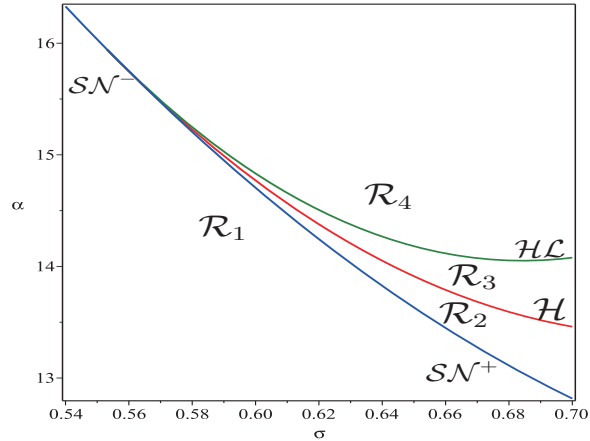}\
\caption { The bifurcation diagram of Bogdanov-Takens bifurcation. }\label{Figure 2}
\end{center}
\end{figure}
When $(\sigma, \alpha)=(0.62, 14.2)\in\mathcal{R}_1$, system (\ref{(3)}) has no equilibrium except the saddle $E_0$ and the stable node $E_k$ (Fig. \ref{Figure 3} (a)). For $(\sigma, \alpha)=(0.62, 14.3)\in\mathcal{R}_2$, system (\ref{(3)}) has four equilibria, i.e., the stable node $E_k$, the stable focus $E_1$ and the saddles $E_0$ and $E_2$ as shown in Fig. \ref{Figure 3} (b). The rise of equilibria $E_1$ and $E_2$ is due to the saddle-node bifurcation.
When $(\sigma, \alpha)=(0.62, 14.42)\in\mathcal{R}_3$, system (\ref{(3)}) has a stable limit cycle and four equilibria, i.e., the stable node $E_k$, the unstable focus $E_1$ and the saddles $E_0$ and $E_2$ as shown in Fig. \ref{Figure 3} (c). The rise of the limit cycle is induced by the Hopf bifurcation.
For $(\sigma, \alpha)=(0.62, 14.55)\in\mathcal{R}_4$, system (\ref{(3)}) only has four equilibria, i.e., the stable node $E_k$, the unstable focus $E_1$ and the saddles $E_0$ and $E_2$ as shown in Fig. \ref{Figure 3} (d). The disappearance of the limit cycle is induced by the homoclinic bifurcation.
Theorem \ref{thm4} describes that one stable limit cycle arises near $E_1$ induced by the Hopf bifurcation as $0<h<1$ and $(k, \sigma, \alpha)$ varies from $\mathcal{S}_{11}\cup\mathcal{L}_{21}\cup\mathcal{S}_{31}$ to $\mathcal{P}_{11}\cup\mathcal{S}_{21}\cup\mathcal{P}_{31}$.
Table 1 shows that system (\ref{(3)}) has two saddles $E_0$ and $E_k$ and an unstable focus or node $E_1$ when $0<h<1$ and $(k, \sigma, \alpha)\in\mathcal{P}_{11}\cup\mathcal{S}_{21}\cup\mathcal{P}_{31}$. Thus, the stable limit cycle is the $\omega$-limit set of the positive solutions. For example, let $h=0.5$ and $(k, \sigma, \alpha)=(5.5, 1, 0.1)\in\mathcal{P}_{11}$, system (\ref{(3)}) has  two saddles $E_0$ and $E_k$ and an unstable focus or node $E_1$ surrounded by a stable limit cycle as shown in Fig. \ref{Figure 4}.
\begin{figure}[h]
\centering\subfigure[]{\label{fig:subfig:a}
\includegraphics[width=5.5cm,height=5.5cm]{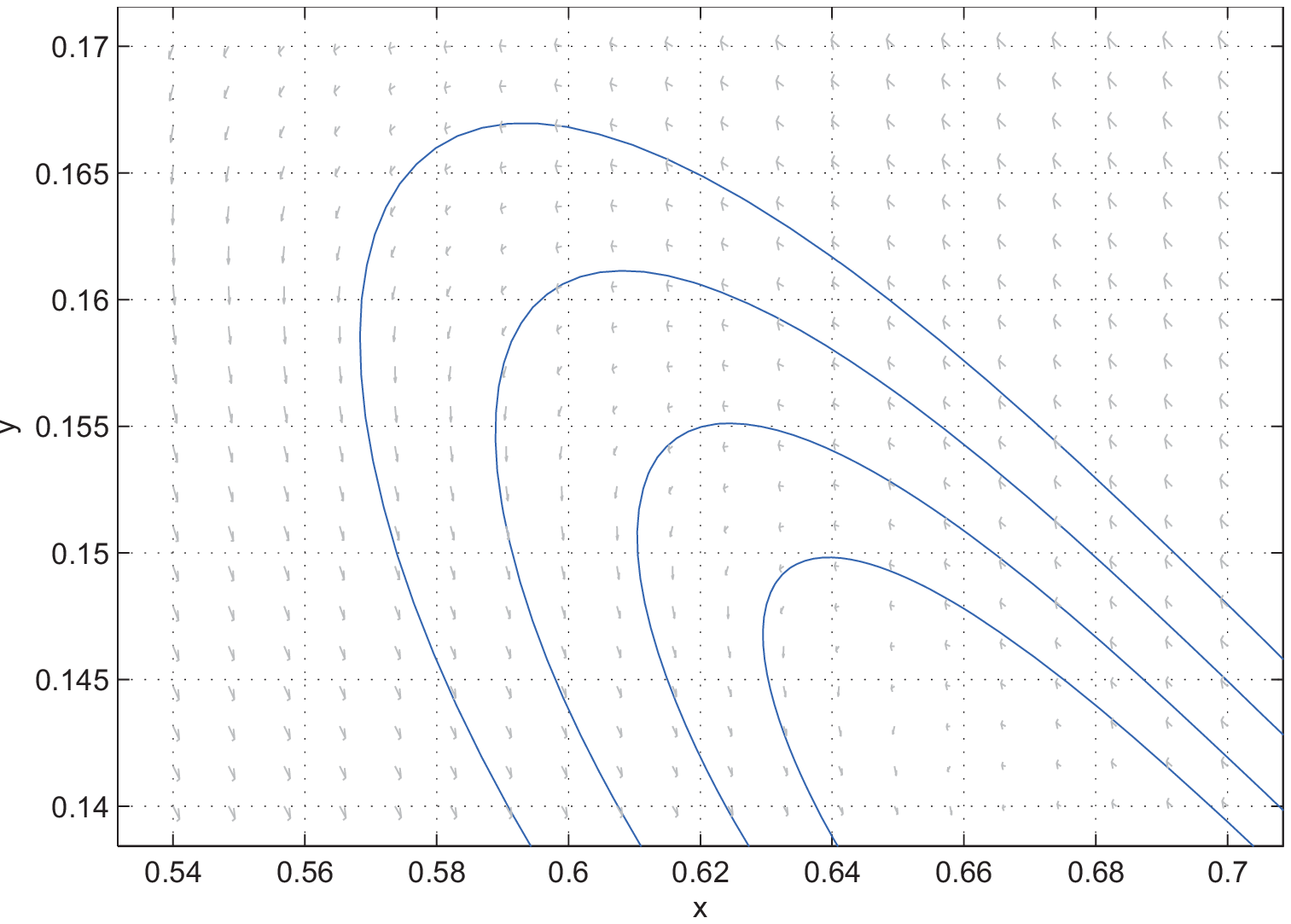}}
\subfigure[]{\label{fig:subfig:b}
\includegraphics[width=5.5cm,height=5.5cm]{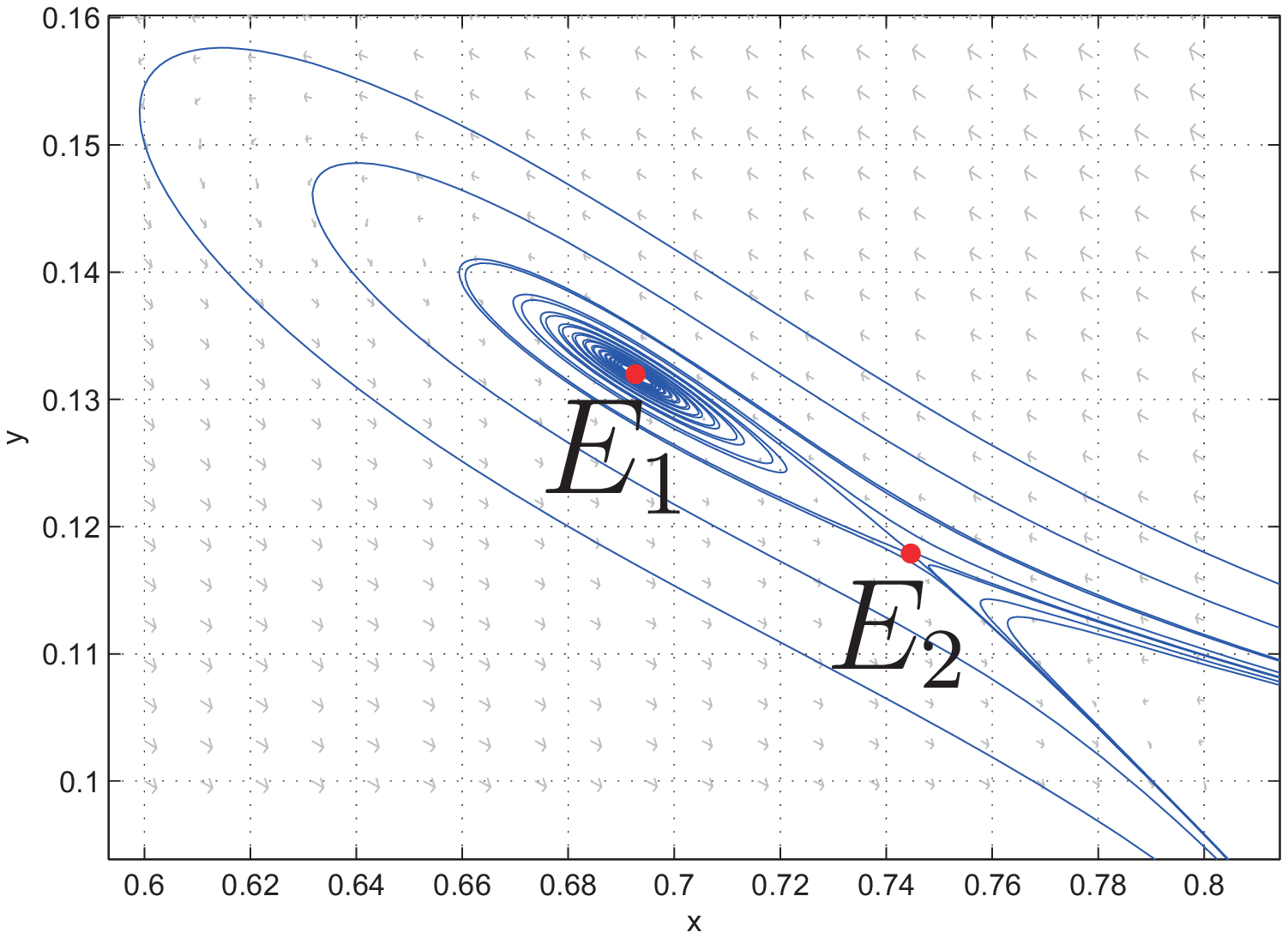}}\\
\subfigure[]{\label{fig:subfig:c}
\includegraphics[width=5.5cm,height=5.5cm]{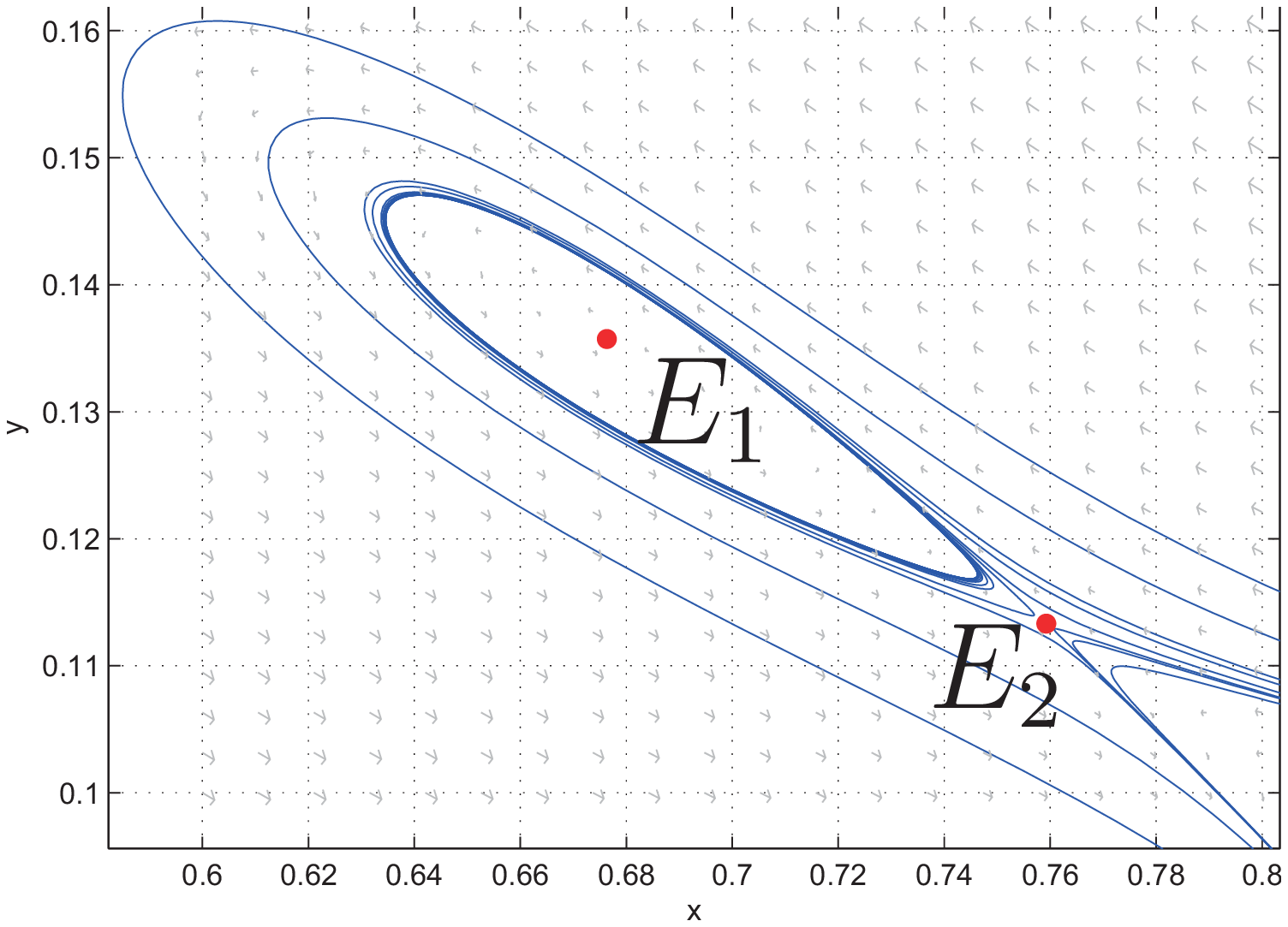}}
\subfigure[]{\label{fig:subfig:d}
\includegraphics[width=5.5cm,height=5.5cm]{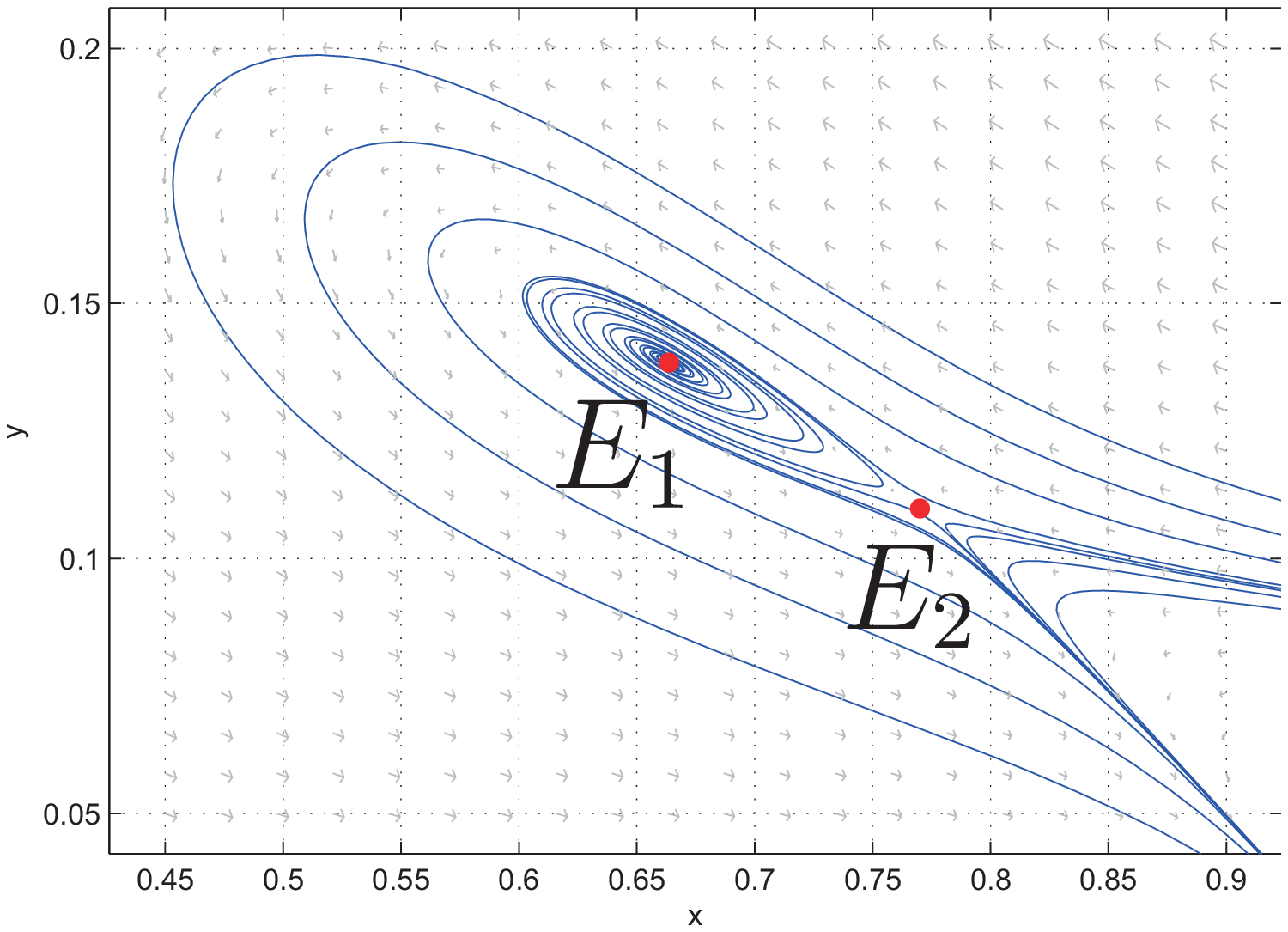}}
\caption{When $h=0.5$ and $k=1$, the dynamical behaviors of system (\ref{(3)}) near the cusp $E_*$ for parameters $(\sigma,\alpha)$ in neighborhood $U$ of point $(\sigma_2,\alpha_*)$ are as follows. (a): No equilibrium as $(\sigma,\alpha)=(0.62, 14.2)$.
(b): Stable focus $E_1$ and saddle $E_2$ as $(\sigma,\alpha)=(0.62, 14.3)$.
(c): Unstable focus $E_1$ surrounded by a stable limit cycle and saddle $E_2$ as $(\sigma,\alpha)=(0.62, 14.42)$.
(d): Unstable focus $E_1$ and saddle $E_2$ as $(\sigma,\alpha)=(0.62, 14.55)$.}
\label{Figure 3}
\end{figure}

\begin{figure}[h]
\begin{center}
\includegraphics[width=8cm,height=6cm]{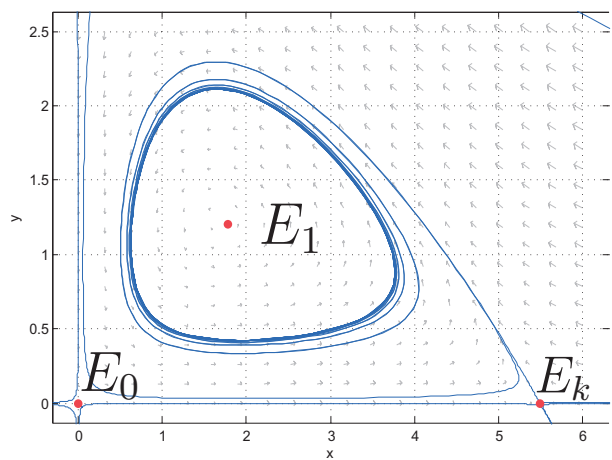}\
\caption {System (\ref{(3)}) has two saddles $E_0$ and $E_k$ and an unstable focus or node $E_1$ surrounded by a stable limit cycle when $h=0.5$, $\sigma=1$, $\alpha=0.1$ and $k=5.5$.} \label{Figure 4}
\end{center}
\end{figure}
From an ecological point of view, the foraging facilitation among predators is an interesting phenomenon to understand the dynamics of the prey-predator interactions in ecosystems, and it is more realistic and reasonable to take into account this factor in the prey-predator system.
The qualitative results of system (\ref{(2)}) indicate that prey-predator system (\ref{(2)}) with foraging facilitation has richer dynamic behaviors than the Rosenzweig-MacArthur system because system (\ref{(00)}) only undergoes the transcritical and Hopf bifurcation.
The analysis of system (\ref{(2)}) reveals that population can be stabilized at the predator free equilibrium or the coexistence equilibrium with increasing the foraging facilitation $\alpha$ as the environmental capacity of prey is relatively low. How the population evolves in time depends on the initial conditions. The foraging facilitation is then beneficial for population persistence and promotes ecosystem diversity.
Cooperative predators can survive in a less favorable and less productive environment, in which sufficient preys are available and the survival is more robust for higher levels of cooperation.
The bistability of the system implies that the predator population goes extinct for low initial predator densities, which actually is the phenomenon of Allee effect in the predators (\cite{CourchampBerec}).  Therefore, the foraging facilitation is a mechanism for inducing Allee effects in predators.
For low environmental capacity of prey and weak foraging facilitation, the prey population is too small to sustain the predator population even though the foraging facilitation of predators exists.
Nevertheless, the foraging facilitation can have not only positive but also negative effects for predators.
For very strong foraging facilitation, the population goes to extinction due to the excessive hunting of prey population by predator population.
The destabilization of the system appears due to the Hopf bifurcation even the homoclinic bifurcation that causes splitting of
the stable cycle, thus ending the oscillation and consequently causing the extinction of the predators. The overexploitation can therefore backfire and result in the extinction of predators because of the increased predation pressure.
It is well known that the Rosenzweig-MacArthur system demonstrates the paradox of enrichment caused by the
Hopf bifurcation (\cite{Rosenzweig1}), which means that a stable oscillation  bifurcates from a stable equilibrium once the environmental carrying capacity of the prey exceeds a critical value. The qualitative results of system (\ref{(2)}) reveal that this typical phenomenon of system (\ref{(00)}) is inherited even if the foraging facilitation is introduced.
We also can observe that the predators will go to extinct if the handing time of the predators $h$ is too long such as $h\geq1$.
By means of bifurcation analysis of prey-predator system (\ref{(2)}), we have proved that hunting cooperation is not always beneficial for the predator population.
Such studies of bifurcations may give insights into the important changes of dynamical behaviors of the system caused by small perturbation of parameters. The results of bifurcations provide some thresholds to control the qualitative properties of the prey-predator system.

\nonumsection{Acknowledgments}
The author is grateful to the associate editor and reviewers for their valuable comments and suggestions that have significantly improved the presentation and quality of the  manuscript.

\nonumsection{Appendix}
\begin{eqnarray*}
\begin{array}{l}
a_{100}:=\frac{x_* \sigma (\alpha_1 k \sigma x_*-\alpha_1 \sigma x_*^2+k) (h k-h x_*-x_*)}{k},~~ a_{001}:=\frac{x_*^3 \sigma^2 (k-x_*)^2 (h-1)}{k},\\
a_{010}:=x_* (\alpha_1 h k \sigma x_*-\alpha_1 h \sigma x_*^2-2 \alpha_1 k \sigma x_*+2 \alpha_1 \sigma x_*^2-k),\\
a_{110}:=2 \alpha_1 h k \sigma x_*-3 \alpha_1 h \sigma x_*^2-2 \alpha_1 k \sigma x_*+2 \alpha_1 \sigma x_*^2-k,~~
a_{101}:=\frac{x_*^2 \sigma^2 (k-x_*) (2 h k-3 h x_*-k+x_*)}{k},\\
a_{200}:=\frac{\sigma (\alpha_1 h k^2 \sigma x_*-4 \alpha_1 h k \sigma x_*^2+3 \alpha_1 h \sigma x_*^3+h k^2-3 h k x_*-k)}{k},~~
a_{011}:=x_*^2 \sigma (k-x_*) (h-2),~a_{020}:=-x_* \alpha_1 k,\\
\end{array}
\end{eqnarray*}
\begin{eqnarray*}
\begin{array}{l}
b_{100}:=\frac{-\sigma (k-x_*) x_* (-1+h) (\alpha_1 k \sigma x_*-\alpha_1 \sigma x_*^2+k)}{k},~~
b_{010}:=-\sigma (k-x_*) x_*^2 \alpha_1 (-1+h),~b_{001}:=\frac{-x_*^3 \sigma^2 (k-x_*)^2 (-1+h)}{k},\\
b_{020}:=-k \alpha_1 x_* (-1+h),~~
b_{110}:=-(-1+h) (2 \alpha_1 k \sigma x_*-2 \alpha_1 \sigma x_*^2+k),~b_{101}:=\frac{-\sigma^2 (k-x_*)^2 x_*^2 (-1+h)}{k},\\
b_{011}:=-2 \sigma (k-x_*) x_*^2 (-1+h),~
a_{10}:=-\frac{(h k (h-1)+h+1) \sigma k}{(h+1) \sigma+k (h-1)^2},~~
a_{01}:=-\frac{(h (h-1)^2 k+(h+1) (h-2)) \sigma-k (h-1)^2 k}{(h-1) ((h+1) \sigma+k (h-1)^2)},\\
a_{20}:=-\frac{(h (h-1)^2 k-h^2 \sigma+(2 h+1) (h-1)) \sigma}{(h-1) (h \sigma-h+1)},~~
a_{02}:= \frac{(h^2 k-h k+h+1) ((h+1) \sigma+k (h-1)^2)}{((h-1) (h k-k+1)+\sigma) (h \sigma-h+1) (h-1)},\\
a_{11}:=\frac{-1}{((h-1) (h k-k+1)+\sigma) (h \sigma-h+1) (h-1)}\{(-h (h-1)(k (h-1)^2+h)-2) \sigma^2+(h-1) (h (2 h-1) (h-1)^2 k^2\\
\phantom{a_{11}:=}+(h-1) (5 h^2-h-3) k+(h+1) (3 h-2)) \sigma-k (h-1)^3 (h k-k+1)\},\\
a_{30}:=\frac{((h+1) \sigma+k (h-1)^2) h \sigma}{(h \sigma-h+1) (h-1) k},~~
a_{21}:=-\frac{((-2 h+1) \sigma+k (h-1)^2+3 h-3) ((h+1) \sigma+k (h-1)^2) (h^2 k-h k+h+1) h \sigma}{k ((h-1) (h k-k+1)+\sigma) (h \sigma-h+1)^2 (h-1)},\\
a_{12}:=\frac{(k (-h^2+h)-h-1) (k (h-1)^2+h \sigma+\sigma)^2}{k (1-h) (h \sigma-h+1)^2 (k (h-1)^2+h+\sigma-1)},~~
b_{10}:=\frac{\{(h-1) (h k-k+1)+\sigma\} k \sigma}{(h+1) \sigma+k (h-1)^2},~~
b_{01}:=\frac{(h^2 k-h k+h+1) \sigma k}{(h+1) \sigma+k (h-1)^2},\\
b_{11}:=\frac{\{h k (h-1)+2 h+2\} \sigma+k (h-1)^2}{h \sigma-h+1},~~
b_{02}:=\frac{(h^2 k-h k+h+1) ((h+1) \sigma+k (h-1)^2)}{((h-1) (h k-k+1)+\sigma) (h \sigma-h+1)},\\
b_{12}:=\frac{\{(h+1) \sigma+k (h-1)^2\}^2 (h^2 k-h k+h+1)}{k ((h-1) (h k-k+1)+\sigma) (h \sigma-h+1)^2},~~
f_{11}:=\frac{a_{11} b_{10}-2 a_{20} b_{01}+b_{01} b_{11}}{b_{10} \beta^2},~~
f_{20}:=\frac{a_{20}}{b_{10} \beta},\\
f_{02}:=\frac{a_{02} b_{10}^2-a_{11} b_{01} b_{10}+a_{20} b_{01}^2-b_{01}^2 b_{11}+b_{01} b_{02} b_{10}}{b_{10} \beta^3},~~
f_{30}:=\frac{a_{30}}{b_{10}^2 \beta},~~
f_{03}:=-\frac{b_{01} (a_{12} b_{10}^2-a_{21} b_{01} b_{10}+a_{30} b_{01}^2+b_{01} b_{10} b_{12})}{b_{10}^2 \beta^4},\\
f_{21}:=\frac{a_{21} b_{10}-3 a_{30} b_{01}}{b_{10}^2 \beta^2},~~
f_{12}:=\frac{a_{12} b_{10}^2-2 a_{21} b_{01} b_{10}+3 a_{30} b_{01}^2+b_{01} b_{10} b_{12}}{b_{10}^2 \beta^3},~~
g_{11}:=\frac{b_{11}}{b_{10} \beta},~~
g_{02}:=-\frac{b_{01} b_{11}-b_{02} b_{10}}{b_{10} \beta^2},\\
g_{12}:=\frac{b_{12}}{b_{10} \beta^2},~~
g_{03}:=-\frac{b_{12} b_{01}}{b_{10} \beta^3},~~
A_{10}:=-\frac{\{h (h k-k+1)+1\} \sigma_2 k}{(h+1) \sigma_2+(h-1)^2 k},~~
A_{01}:=-\frac{\{(h (h-1)^2 k+h^2-h-2) \sigma_2-(h-1)^2 k\} k}{(h-1) \{(h+1) \sigma_2+(h-1)^2 k\}},\\
A_{20}:=-\frac{\sigma_2 \{h (h-1)^2 k-h^2 \sigma_2+(2 h+1) (h-1)\}}{(h-1) (h \sigma_2-h+1)},~~
A_{02}:=\frac{\{h (h k-k+1)+1\} \{(h+1) \sigma_2+(h-1)^2 k\}}{\{(h-1)^2 k+h+\sigma_2-1\} (h \sigma_2-h+1) (h-1)},\\
A_{11}:=\frac{-1}{\{(h-1)^2 k+h+\sigma_2-1\} (h \sigma_2-h+1) (h-1)}\{(-h (h-1)^3 k-h^3+h^2-2) \sigma_2^2+(h-1) (h (2 h-1) (h-1)^2 k^2\\
\phantom{A_{11}:=}+(h-1) (5 h^2-h-3) k+3 h^2+h-2) \sigma_2-k (h-1)^3 (h k-k+1)\},\\
B_{10}:=\frac{\{(h-1)^2 k+h+\sigma_2-1\} k \sigma_2}{(h+1) \sigma_2+(h-1)^2 k},~~
B_{01}:=\frac{\{h (h k-k+1)+1\} \sigma_2 k}{(h+1) \sigma_2+(h-1)^2 k},~
B_{11}:=\frac{\{h k (h-1)+2 h+2\} \sigma_2+(h-1)^2 k}{h \sigma_2-h+1},\\
B_{02}:=\frac{\{h (h k-k+1)+1\} \{(h+1) \sigma_2+(h-1)^2 k\}}{\{(h-1)^2 k+h+\sigma_2-1\} (h \sigma_2-h+1)},~
\mathcal{F}_{00}(\epsilon_1, \epsilon_2):=\frac{E_{00}^2 F_{02}-E_{00} E_{01} F_{01}+E_{01}^2 F_{00}}{E_{01}},\\
\mathcal{F}_{10}(\epsilon_1, \epsilon_2):=\frac{-1}{E_{01}^2}\{E_{00}^2 E_{11} F_{02}+E_{00} E_{01}^2 F_{11}-2 E_{00} E_{01} E_{10} F_{02}-E_{01}^3 F_{10}+E_{01}^2 E_{10} F_{01}-E_{01}^2 E_{11} F_{00}\},\\
\mathcal{F}_{01}(\epsilon_1, \epsilon_2):=-\frac{E_{00} E_{11}+2 E_{00} F_{02}-E_{01} E_{10}-E_{01} F_{01}}{E_{01}},~\mathcal{F}_{02}(\epsilon_1, \epsilon_2):=\frac{E_{11}+F_{02}}{E_{01}},\\
\mathcal{F}_{11}(\epsilon_1, \epsilon_2):=\frac{1}{E_{01}^2}\{E_{00} E_{11}^2+2 E_{00} E_{11} F_{02}+2 E_{01}^2 E_{20}+E_{01}^2 F_{11}-E_{01} E_{10} E_{11}-2 E_{01} E_{10} F_{02}\},\\
\mathcal{F}_{20}(\epsilon_1, \epsilon_2):=\frac{-1}{E_{01}^3}\{E_{00}^2 E_{11}^2 F_{02}-2 E_{00} E_{01}^2 E_{20} F_{02}+2 E_{00} E_{01} E_{10} E_{11} F_{02}-E_{00} E_{01} E_{11}^2 F_{01}-E_{01}^4 F_{20}\\
\phantom{\mathcal{F}_{20}(\epsilon_1, \epsilon_2)=}
+E_{01}^3 E_{10} F_{11}-E_{01}^3 E_{11} F_{10}+E_{01}^3 E_{20} F_{01}-E_{01}^2 E_{10}^2 F_{02}\},\\
p_{001}:=-\frac{(a_{100}+b_{100}) b_{001} b_{010}}{b_{100} (a_{100}+b_{010})},~
p_{200}:=\frac{(a_{020} b_{100}^2+a_{100} b_{010} b_{110}-a_{100} b_{020} b_{100}-a_{110} b_{010} b_{100}+a_{200} b_{010}^2)}{(a_{100}+b_{010}) b_{010}},\\
p_{020}:=\frac{(a_{020} b_{100}^2+a_{100}^2 a_{200}-a_{100}^2 b_{110}+a_{100} a_{110} b_{100}-a_{100} b_{020} b_{100}) b_{010}}{b_{100}^2 (a_{100}+b_{010})},\\
p_{002}:=\frac{b_{001} b_{010} (b_{010}-b_{100}) (a_{100}^2 b_{101}-a_{100} a_{101} b_{100}+a_{100} b_{010} b_{101}-a_{101} b_{010} b_{100}+a_{200} b_{001}  b_{010}-a_{200} b_{001} b_{100})}{(a_{100}+b_{010})^3 b_{100}^2},\\
p_{110}:=\frac{2 a_{020} b_{100}^2-a_{100}^2 b_{110}+a_{100} a_{110} b_{100}-2 a_{100} a_{200} b_{010}+a_{100} b_{010} b_{110}-2 a_{100} b_{020} b_{100}-a_{110} b_{010} b_{100}}{b_{100} (a_{100}+b_{010})},\\
p_{011}:=\frac{-1}{b_{100}^2 (a_{100}+b_{010})^2}(a_{011} a_{100} b_{100}^2+a_{011} b_{010} b_{100}^2-a_{100}^3 b_{101}+a_{100}^2 a_{101} b_{100}-a_{100}^2 b_{010} b_{101}\\
\phantom{p_{011}:=}-a_{100}^2 b_{011} b_{100}+a_{100} a_{101} b_{010} b_{100}-2 a_{100} a_{200} b_{001} b_{010}+2 a_{100} a_{200} b_{001} b_{100}+a_{100} b_{001} b_{010} b_{110}\\
\phantom{p_{011}:=}-a_{100} b_{001} b_{100} b_{110}-a_{100} b_{010} b_{011} b_{100}-a_{110} b_{001} b_{010} b_{100}+a_{110} b_{001} b_{100}^2) b_{010},\\
p_{101}:=\frac{-1}{b_{100} (a_{100}+b_{010})^2}(a_{011} a_{100} b_{100}^2+a_{011} b_{010} b_{100}^2+a_{100}^2 b_{010} b_{101}-a_{100}^2 b_{011} b_{100}-a_{100} a_{101} b_{010} b_{100}\\
\phantom{p_{011}:=}+a_{100} b_{001} b_{010} b_{110}-a_{100} b_{001} b_{100} b_{110}+a_{100} b_{010}^2 b_{101}-a_{100} b_{010} b_{011} b_{100}-a_{101} b_{010}^2 b_{100}\\
\phantom{p_{011}:=}-a_{110} b_{001} b_{010} b_{100}+a_{110} b_{001} b_{100}^2+2 a_{200} b_{001} b_{010}^2-2 a_{200} b_{001} b_{010} b_{100}),\\
q_{010}:=a_{100}+b_{010},~
q_{200}:=\frac{(a_{020} b_{100}^2-a_{110} b_{010} b_{100}+a_{200} b_{010}^2-b_{010}^2 b_{110}+b_{010} b_{020} b_{100}) b_{100}}{(a_{100}+b_{010}) b_{010}^2},\\
q_{020}:=\frac{a_{020} b_{100}^2+a_{100}^2 a_{200}+a_{100} a_{110} b_{100}+a_{100} b_{010} b_{110}+b_{010} b_{020} b_{100}}{b_{100} (a_{100}+b_{010})},\\
q_{002}:=-\frac{b_{001} (b_{010}-b_{100}) (a_{100} a_{101} b_{100}+a_{100} b_{010} b_{101}+a_{101} b_{010} b_{100}-a_{200} b_{001} b_{010}+a_{200} b_{001} b_{100}+b_{010}^2 b_{101})}{(a_{100}+b_{010})^3 b_{100}},\\
q_{110}:=-\frac{2 a_{020} b_{100}^2+a_{100} a_{110} b_{100}-2 a_{100} a_{200} b_{010}+a_{100} b_{010} b_{110}-a_{110} b_{010} b_{100}-b_{010}^2 b_{110}+2 b_{010} b_{020} b_{100}}{b_{010} (a_{100}+b_{010})},\\
q_{101}:=\frac{-1}{b_{010} (a_{100}+b_{010})^2}(a_{011} a_{100} b_{100}^2+a_{011} b_{010} b_{100}^2-a_{100} a_{101} b_{010} b_{100}-a_{100} b_{010}^2 b_{101}+a_{100} b_{010} b_{011} b_{100}\\
\phantom{p_{011}:=}-a_{101} b_{010}^2 b_{100}-a_{110} b_{001} b_{010} b_{100}+a_{110} b_{001} b_{100}^2+2 a_{200} b_{001} b_{010}^2-2 a_{200} b_{001} b_{010} b_{100}\\
\phantom{p_{011}:=}-b_{001} b_{010}^2 b_{110}+b_{001} b_{010} b_{100} b_{110}-b_{010}^3 b_{101}+b_{010}^2 b_{011} b_{100}),\\
q_{011}:=\frac{1}{b_{100} (a_{100}+b_{010})^2}(a_{011} a_{100} b_{100}^2+a_{011} b_{010} b_{100}^2+a_{100}^2 a_{101} b_{100}+a_{100}^2 b_{010} b_{101}+a_{100} a_{101} b_{010} b_{100}\\
\phantom{p_{011}:=}-2 a_{100} a_{200} b_{001} b_{010}+2 a_{100} a_{200} b_{001} b_{100}+a_{100} b_{010}^2 b_{101}+a_{100} b_{010} b_{011} b_{100}-a_{110} b_{001} b_{010} b_{100}\\
\phantom{p_{011}:=}+a_{110} b_{001} b_{100}^2-b_{001} b_{010}^2 b_{110}+b_{001} b_{010} b_{100} b_{110}+b_{010}^2 b_{011} b_{100}),\\
\end{array}
\end{eqnarray*}
\begin{eqnarray*}
\begin{array}{l}
c_{11}:=\frac{1}{(a_{100}+b_{010})^4 b_{010}}(a_{011} a_{100}^2 b_{100}^2+2 a_{011} a_{100} b_{010} b_{100}^2+a_{011} b_{010}^2 b_{100}^2+2 a_{020} a_{100} b_{001} b_{100}^2+2 a_{020} b_{001} b_{100}^3\\
\phantom{c_{11}=}-a_{100}^2 a_{101} b_{010} b_{100}-a_{100}^2 b_{010}^2 b_{101}+a_{100}^2 b_{010} b_{011} b_{100}-2 a_{100} a_{101} b_{010}^2 b_{100}-3 a_{100} a_{110} b_{001} b_{010} b_{100}\\
\phantom{c_{11}=}+a_{100} a_{110} b_{001} b_{100}^2+4 a_{100} a_{200} b_{001} b_{010}^2-2 a_{100} a_{200} b_{001} b_{010} b_{100}-3 a_{100} b_{001} b_{010}^2 b_{110}\\
\phantom{c_{11}=}+2 a_{100} b_{001} b_{010} b_{020} b_{100}+a_{100} b_{001} b_{010} b_{100} b_{110}-2 a_{100} b_{010}^3 b_{101}+2 a_{100} b_{010}^2 b_{011} b_{100}-a_{101} b_{010}^3 b_{100}\\
\phantom{c_{11}=}-a_{110} b_{001} b_{010}^2 b_{100}-a_{110} b_{001} b_{010} b_{100}^2+2 a_{200} b_{001} b_{010}^3-b_{001} b_{010}^3 b_{110}-b_{001} b_{010}^2 b_{100} b_{110}\\
\phantom{c_{11}=}+2 b_{001} b_{010} b_{020} b_{100}^2-b_{010}^4 b_{101}+b_{010}^3 b_{011} b_{100}),\\
c_{20}:=\frac{-b_{100}}{(a_{100}+b_{010})^2 b_{010}^2}(a_{020} b_{100}^2-a_{110} b_{010} b_{100}+a_{200} b_{010}^2-b_{010}^2 b_{110}+b_{010} b_{020} b_{100}),\\
c_{02}:=\frac{-b_{001}}{(a_{100}+b_{010})^6 b_{100}}(a_{011} a_{100}^3 b_{100}^2+2 a_{011} a_{100}^2 b_{010} b_{100}^2+a_{011} a_{100}^2 b_{100}^3+a_{011} a_{100} b_{010}^2 b_{100}^2+2 a_{011} a_{100} b_{010} b_{100}^3\\
\phantom{c_{11}=}+a_{011} b_{010}^2 b_{100}^3+2 a_{020} a_{100}^2 b_{001} b_{100}^2+4 a_{020} a_{100} b_{001} b_{100}^3+2 a_{020} b_{001} b_{100}^4-2 a_{100}^3 a_{101} b_{010} b_{100}\\
\phantom{c_{11}=}+a_{100}^3 a_{101} b_{100}^2-2 a_{100}^3 b_{010}^2 b_{101}+a_{100}^3 b_{010} b_{011} b_{100}+a_{100}^3 b_{010} b_{100} b_{101}-5 a_{100}^2 a_{101} b_{010}^2 b_{100}\\
\phantom{c_{11}=}+2 a_{100}^2 a_{101} b_{010} b_{100}^2-3 a_{100}^2 a_{110} b_{001} b_{010} b_{100}+a_{100}^2 a_{110} b_{001} b_{100}^2+5 a_{100}^2 a_{200} b_{001} b_{010}^2\\
\phantom{c_{11}=}-4 a_{100}^2 a_{200} b_{001} b_{010} b_{100}+a_{100}^2 a_{200} b_{001} b_{100}^2-3 a_{100}^2 b_{001} b_{010}^2 b_{110}+2 a_{100}^2 b_{001} b_{010} b_{020} b_{100}\\
\phantom{c_{11}=}+a_{100}^2 b_{001} b_{010} b_{100} b_{110}-5 a_{100}^2 b_{010}^3 b_{101}+2 a_{100}^2 b_{010}^2 b_{011} b_{100}+2 a_{100}^2 b_{010}^2 b_{100} b_{101}+a_{100}^2 b_{010} b_{011} b_{100}^2\\
\phantom{c_{11}=}-4 a_{100} a_{101} b_{010}^3 b_{100}+a_{100} a_{101} b_{010}^2 b_{100}^2-a_{100} a_{110} b_{001} b_{010}^2 b_{100}-4 a_{100} a_{110} b_{001} b_{010} b_{100}^2+a_{100} a_{110} b_{001} b_{100}^3\\
\phantom{c_{11}=}+4 a_{100} a_{200} b_{001} b_{010}^3-a_{100} b_{001} b_{010}^3 b_{110}-4 a_{100} b_{001} b_{010}^2 b_{100} b_{110}+4 a_{100} b_{001} b_{010} b_{020} b_{100}^2\\
\phantom{c_{11}=}+a_{100} b_{001} b_{010} b_{100}^2 b_{110}-4 a_{100} b_{010}^4 b_{101}+a_{100} b_{010}^3 b_{011} b_{100}+a_{100} b_{010}^3 b_{100} b_{101}+2 a_{100} b_{010}^2 b_{011} b_{100}^2\\
\phantom{c_{11}=}-a_{101} b_{010}^4 b_{100}-a_{110} b_{001} b_{010}^2 b_{100}^2-a_{110} b_{001} b_{010} b_{100}^3+a_{200} b_{001} b_{010}^4+a_{200} b_{001} b_{010}^2 b_{100}^2\\
\phantom{c_{11}=}-b_{001} b_{010}^3 b_{100} b_{110}-b_{001} b_{010}^2 b_{100}^2 b_{110}+2 b_{001} b_{010} b_{020} b_{100}^3-b_{010}^5 b_{101}+b_{010}^3 b_{011} b_{100}^2),\\
E_{00}:=-\frac{k^3 \sigma_2 (h \sigma_2-h+1)^3 (h^2 k-2 h k+h+k+\sigma_2-1) (h-1) \epsilon_1}{(h^2 k-2 h k+h \sigma_2+k+\sigma_2)^4},\\
E_{10}:=\frac{k^2 (h-1) \epsilon_1 (h \sigma_2-h+1)^2 (h^2 k \sigma_2-2 h^2 k-h k \sigma_2+4 h k-h \sigma_2-2 k-\sigma_2)}{(h^2 k-2 h k+h \sigma_2+k+\sigma_2)^3},\\
E_{01}:=\frac{-k^2 \sigma_2 (h-1) (h \sigma_2-h+1)^2 (h^2 k-2 h k+h+k+\sigma_2-1) \epsilon_1+(h^2 k-2 h k+h \sigma_2+k+\sigma_2)^3}{(h^2 k-2 h k+h \sigma_2+k+\sigma_2)^3} ,\\
E_{20}:=-\{-k^2 (h-1) (h \sigma_2-h+1)^2 (h^2 k-2 h k+h+k+\sigma_2-1) ((2 h^2 k-2 h k+h+1) \sigma_2-k (h-1)^2) \epsilon_1\\
\phantom{E_{20}:=}+\sigma_2 (h^2 k-h k+h+1)^2 (h^2 k-2 h k+h \sigma_2+k+\sigma_2)^2\}/\{\sigma_2 k (h^2 k-2 h k+h+k+\sigma_2-1)^2 \\
\phantom{E_{20}:=}\times(h \sigma_2-h+1) (h^2 k-2 h k+h \sigma_2+k+\sigma_2)\},\\
E_{11}:=\{-2 k^2 \sigma_2 (h-1) (h \sigma_2-h+1)^2 (h^2 k-2 h k+h+k+\sigma_2-1) \epsilon_1+((h^2 k-h k+2 h+2) \sigma_2+k (h-1)^2)\\
\phantom{E_{20}:=}\times(\sigma_2 (h+1)+k (h-1)^2)^2\}/\{k \sigma_2 (h^2 k-2 h k+h \sigma_2+k+\sigma_2) (h^2 k-2 h k+h+k+\sigma_2-1)\\
\phantom{E_{20}:=}\times(h \sigma_2-h+1)\} ,\\
F_{00}:=\{(h \sigma_2-h+1) k h k^2 \sigma_2 (h-1) (h \sigma_2-h+1)^2 (h^2 k-2 h k+h+k+\sigma_2-1) \epsilon_1 \epsilon_2-k^2 \sigma_2^2 (h-1)^2\\
\phantom{E_{20}:=}\times(h \sigma_2-h+1)^2 (h k-k-\sigma_2+2) \epsilon_1-(h^2 k-2 h k+h \sigma_2+k+\sigma_2)^3 \epsilon_2\}/\{\sigma_2 (h-1) (h^2 k\\
\phantom{E_{20}:=}-2 h k+h \sigma_2+k+\sigma_2)^4\},\\
F_{10}:=-\{k^2 (h-1)^2 (h \sigma_2-h+1)^2 (h^2 k-2 h k+h+k+\sigma_2-1) ((h k-h-1) \sigma_2^2+(h+1) (h^3 k^2-2 h^2 k^2\\
\phantom{E_{20}:=}+h^2 k+h k^2-k+2) \sigma_2-h k (h-1)^2 (h k-k+1)) \epsilon_1 \epsilon_2+k^2 \sigma_2 (h-1) (h \sigma_2-h+1)^2 (h^3 k+h^2\\
\phantom{E_{20}:=}-3 h k-h \sigma_2+3 h+2 k+2 \sigma_2-4)(h^2 k-2 h k+h \sigma_2+k+\sigma_2) (h^2 k-2 h k+h+k+\sigma_2-1) \epsilon_1\\
\phantom{E_{20}:=}-(h-1) (h^2 k-h k+h+1) (h k-k-\sigma_2+2) (h^2 k-2 h k+h \sigma_2+k+\sigma_2)^3 \epsilon_2-((-h^3 k+2 h^2 k\\
\phantom{E_{20}:=}-h^2-h k+h+2) \sigma_2^2+(h-1) (h^2 k-h k+3 h+3) (h k-k+1) \sigma_2+k (h-1)^3 (h k-k+1))(h^2 k\\
\phantom{E_{20}:=}-2 h k+h \sigma_2+k+\sigma_2)^3 \}/\{\sigma_2 (h^2 k-2 h k+h+k+\sigma_2-1)^2 (h-1) (h^2 k-2 h k+h \sigma_2+k+\sigma_2)^3\} ,\\
F_{01}:=-\{-(h^3 k-h^2 k+h^2-h k+h+k+\sigma_2-2) k^2 \sigma_2 (h-1) (h \sigma_2-h+1)^2 (h^2 k-2 h k+h+k+\sigma_2\\
\phantom{E_{20}:=}-1) \epsilon_2 \epsilon_1+(h^3 k-h^2 k+h^2-h k+h+k+\sigma_2-2) (h^2 k-2 h k+h \sigma_2+k+\sigma_2)^3 \epsilon_2\}/\{\sigma_2 (h^2 k-2 h k\\
\phantom{E_{20}:=}+h+k+\sigma_2-1) (h-1)(h^2 k-2 h k+h \sigma_2+k+\sigma_2)^3\} ,\\
F_{11}:=\{-h k^2 (h-1) (h \sigma_2-h+1)^3 ((-4 h^2 k+6 h k-3 h-2 k-3) \sigma_2+k (h-1)^2 (2 h k-2 k+3)) (h^2 k\\
\phantom{E_{20}:=}-2 h k+h+k+\sigma_2-1) \epsilon_1 \epsilon_2-k^2 \sigma_2 (h-1) (h \sigma_2-h+1)^2 (h^2 k-2 h k+h+k+\sigma_2-1)\\
\phantom{E_{20}:=}\times ((-2 h^2 (2 h-1) (h-1) k-(h+1) (3 h^2-2)) \sigma_2^2+(h-1) (2 h^2 (h-1)^2 k^2+(9 h^3-7 h^2+2 h-4) k\\
\phantom{E_{20}:=}+5 h^2+9 h+4) \sigma_2+k (h-1)^3 (2 h k-h-2 k+4)) \epsilon_1-(h+2) (h \sigma_2-h+1)(h^2 k-h k+h+1) \\
\phantom{E_{20}:=}(h^2 k-2 h k+h \sigma_2+k+\sigma_2)^3 \epsilon_2+((-h^4 k-h^3 k-h^3+3 h^2 k-3 h^2-h k+2) \sigma_2^2+(h-1) (h^2 k-h k\\
\phantom{E_{20}:=}+3 h+3) (h^2 k+h-k+2) \sigma_2+k (h-1)^3 (h^2 k+h-k+2))(h^2 k-2 h k+h \sigma_2+k+\sigma_2)^3\}\\
\phantom{E_{20}:=}/\{\sigma_2 k (h^2 k-2 h k+h \sigma_2+k+\sigma_2)^2 (h^2 k-2 h k+h+k+\sigma_2-1)^2 (h \sigma_2-h+1) (h-1)\} ,\\
F_{02}:=-\{(\sigma_2+\epsilon_2)(-h k^2 \sigma_2 (h-1) (h \sigma_2-h+1)^2 (h^2 k-2 h k-2 h \sigma_2+3 h+k+\sigma_2-3) (h^2 k-2 h k+h\\
\phantom{E_{20}:=}+k+\sigma_2-1) \epsilon_1+(h^3 k-2 h^2 k-h^2 \sigma_2+2 h^2+h k-h-1) (h^2 k-2 h k+h \sigma_2+k+\sigma_2)^3)\}\\
\phantom{E_{20}:=}/\{k \sigma_2 (h^2 k-2 h k+h+k+\sigma_2-1) (h \sigma_2-h+1) (h-1) (h^2 k-2 h k+h \sigma_2+k+\sigma_2)^2\} ,\\
F_{20}:=\{h k^2 (h-1) (h \sigma_2-h+1)^2 (h^2 k-h k+h+1) (h^2 k-2 h k+h+k+\sigma_2-1) ((-2 h^3 k+3 h^2 k-h^2\\
\phantom{E_{20}:=}-h k-2 h-1) \sigma_2^2+k (h-1)^3 (h k+3) \sigma_2-k (h-1)^3 (2 h k-2 k+3)) \epsilon_1 \epsilon_2+k^2 (h-1) (h \sigma_2-h+1)^2\\
\end{array}
\end{eqnarray*}
\begin{eqnarray*}
\begin{array}{l}
\phantom{E_{20}:=}\times (h^2 k-2 h k+h+k+\sigma_2-1) ((-h^3 (2 h-1) (h-1)^2 k^2+(-3 h^5+5 h^3-2 h) k-(h^2+h-1)\\
\phantom{E_{20}:=}\times (h+1)^2) \sigma_2^3+(h-1) (-(h^2+h-1) (h+1)^2 k^3+2 h (3 h^2+h+2) (h-1)^2 k^2+(6 h^4+3 h^3-5 h^2\\
\phantom{E_{20}:=}-3 h-1) k+h^3+4 h^2+5 h+2) \sigma_2^2+k (h-1)^3 (2 h (h-1)^2 k^2-(h-1) (3 h^2-2 h+1) k\\
\phantom{E_{20}:=}-3 h (h+1)) \sigma_2-k^2 (h-1)^5 (h^2 k+h-k+2)) \epsilon_1+(h^2 k-h k+h+1)^2 (h^3 k-2 h^2 k+h^2+h k\\
\phantom{E_{20}:=}+2 h \sigma_2-2 h+1) (h^2 k-2 h k+h \sigma_2+k+\sigma_2)^3 \epsilon_2-\sigma_2 (h^2 k-h k+h+1)^2 (h^2 k-2 h k-2 h \sigma_2\\
\phantom{E_{20}:=}+3 h+k+\sigma_2-3) (h^2 k-2 h k+h \sigma_2+k+\sigma_2)^3 \}/\{\sigma_2 k (h^2 k-2 h k+h+k+\sigma_2-1)^3 \\
\phantom{E_{20}:=}\times(h \sigma_2-h+1) (h-1) (h^2 k-2 h k+h \sigma_2+k+\sigma_2)^2\},\\
D_{31}:= h^8+18 h^7+13 h^6-98 h^5+60 h^4+154 h^3+93 h^2+702 h-255,\\
D_{41}:= h^{13}+33 h^{12}+305 h^{11}-227 h^{10}-5629 h^9-1655 h^8+16622 h^7-21154 h^6-43339 h^5+33581 h^4\\
\phantom{D_{41}:=}+17581 h^3-31623 h^2-38277 h+14965,\\
D_{51}:= h^{18}+16 h^{17}-34 h^{16}+1496 h^{15}+3051 h^{14}-18274 h^{13}+239 h^{12}+298960 h^{11}+160771 h^{10}\\
\phantom{D_{41}:=}-960080 h^9+325619 h^8+2233144 h^7-857259 h^6-2020738 h^5+675405 h^4+1195888 h^3\\
\phantom{D_{41}:=}-957364 h^2+271188 h-28445,\\
D_{61}:= 6 h^{21}+10 h^{20}-1901 h^{19}+13365 h^{18}+43756 h^{17}-122354 h^{16}-1053075 h^{15}-202967 h^{14}\\
\phantom{D_{41}:=}+4549769 h^{13}-695287 h^{12}-14495889 h^{11}-2284869 h^{10}+29927221 h^9-1756299 h^8-58026517 h^7\\
\phantom{D_{41}:=}+16372127 h^6+44351321 h^5-9554651 h^4-29330922 h^3+21456856 h^2-5879673 h+605877,\\
D_{71}:= 8 h^{22}-28 h^{21}-2731 h^{20}+29614 h^{19}-7745 h^{18}-521638 h^{17}-839519 h^{16}+6657768 h^{15}\\
\phantom{D_{41}:=}+12874736 h^{14}-21638968 h^{13}-29546678 h^{12}+38082396 h^{11}+22852074 h^{10}-32346916 h^9\\
\phantom{D_{41}:=}-11283238 h^8+24860424 h^7-8757992 h^6-717148 h^5+354673 h^4+621190 h^3-382169 h^2\\
\phantom{D_{41}:=}+87034 h-7531.\\
\mu_{110}:=\{2 k^3 (h \sigma_2-h+1)^3 (h-1)^3 (h^2 k-h k+h+1) ((2 h^4 (h-1)^3 k^3+(7 h^4+12 h^3+6 h^2-4 h+1)\\
\phantom{\mu_{110}:=}\times (h-1)^2 k^2+8 h (h-1) (h+1)^3 k+3 (h+1)^4) \sigma_2+k (h-1)^2((h-1) (h^3+3 h^2-3 h+1) k\\
\phantom{\mu_{110}:=}+(h+1)^3) (h k-k+1))\}/\{(-h^3 k+2 h^2 k-h^2-h k+h+2)^2 (h^2 k-2 h k+h \sigma_2+k+\sigma_2)^4 \\
\phantom{\mu_{110}:=}\times(2 h-2) (h^3 k \sigma_2-h^2 k \sigma_2+h^2 k+h^2 \sigma_2-2 h k+2 h \sigma_2+k+\sigma_2)\} ,\\
\mu_{101}:=
\{k ((h^4 k-h^3 k+h^3+6 h-2) \sigma_2-(h-1) (h^3 k-5 h^2 k+h^2+6 h k-4 h-2 k+4)) (-h^3 k+2 h^2 k\\
\phantom{\mu_{110}:=}-h^2-h k+h+2)\}/\{(h-1)^2 ((2 h^4 (h-1)^3 k^3+(7 h^4+12 h^3+6 h^2-4 h+1) (h-1)^2 k^2\\
\phantom{\mu_{110}:=}+8 h (h-1) (h+1)^3 k+3 (h+1)^4) \sigma_2+k (h-1)^2 (h^4 k+2 h^3 k+h^3-6 h^2 k+3 h^2+4 h k+3 h\\
\phantom{\mu_{110}:=}-k+1) (h k-k+1))\},\\
\mu_{210}:=
\{4 k^2 (h-1)^5 (h^2 k-h k+h+1)^3 ((h^8 (h^2-2 h+3) (h-1)^7 k^7+h^7 (11 h^3-9 h^2+14 h+44) (h-1)^6 k^6\\
\phantom{\mu_{110}:=}+2 h^6 (23 h^4+7 h^3+32 h^2+159 h+140) (h-1)^5 k^5+2 h^2 (50 h^8+68 h^7+139 h^6+589 h^5+924 h^4\\
\phantom{\mu_{110}:=}+448 h^3+28 h^2-8 h+1) (h-1)^4 k^4+h (h+1) (125 h^8+169 h^7+478 h^6+1990 h^5+3092 h^4\\
\phantom{\mu_{110}:=}+1626 h^3+236 h^2-62 h+8) (h-1)^3 k^3+(h+1) (91 h^9+208 h^8+558 h^7+2298 h^6+4660 h^5\\
\phantom{\mu_{110}:=}+4398 h^4+1888 h^3+306 h^2-60 h+8) (h-1)^2 k^2+2 h (h-1) (18 h^3-33 h^2+151 h+76)(h+1)^6 k\\
\phantom{\mu_{110}:=}+(6 (h^3-2 h^2+10 h+4)) (h+1)^7) \sigma_2+k (h-1)^2 (h k-k+1) (h^7 (h^2-2 h+3) (h-1)^5 k^5\\
\phantom{\mu_{110}:=}+h^6 (6 h^3-h^2+3 h+38) (h-1)^4 k^4+h^2 (14 h^7+17 h^6+17 h^5+180 h^4+134 h^3+42 h^2-14 h+2)\\
\phantom{\mu_{110}:=}\times (h-1)^3 k^3+h (h+1) (16 h^7+21 h^6+42 h^5+312 h^4+254 h^3+162 h^2-52 h+8) (h-1)^2 k^2\\
\phantom{\mu_{110}:=}+(h-1) (h+1) (9 h^8+20 h^7+52 h^6+268 h^5+462 h^4+384 h^3+178 h^2-44 h+8) k+(2 (h^3-2 h^2\\
\phantom{\mu_{110}:=}+10 h+4)) (h+1)^6))\}/\{(-h^3 k+2 h^2 k-h^2-h k+h+2)^5 (h^2 k-2 h k+h \sigma_2+k+\sigma_2)^3 (2 h-2)\\
\phantom{\mu_{110}:=}\times ((h^3 k-h^2 k+h^2+2 h+1) \sigma_2+k (h-1)^2) (h^2 k-2 h k+h+k+\sigma_2-1)\},\\
\mu_{201}:=
\{-2 (h-1)^2 (h k-k+1) (h^2 k-h k+h+1) ((2 h^6 (h-1)^5 k^5+h^5 (25 h+19) (h-1)^4 k^4+h (103 h^5\\
\phantom{\mu_{110}:=}+183 h^4+60 h^3+10 h^2-5 h+1) (h-1)^3 k^3+(h+1) (181 h^5+326 h^4+114 h^3+6 h^2-6 h+2)\\
\phantom{\mu_{110}:=}\times (h-1)^2 k^2+(h-1)(143 h^2-31 h+6) (h+1)^4 k+(6 (7 h-2)) (h+1)^5) \sigma_2+k (h-1)^2 \\
\phantom{\mu_{110}:=}\times(2 h^5 (h-1)^4 k^4+h (18 h^4+11 h^3+6 h^2-4 h+1) (h-1)^3 k^3+(2 (22 h^5+37 h^4+28 h^3-6 h^2+1))\\
\phantom{\mu_{110}:=}\times (h-1)^2 k^2+(h-1) (h+1) (42 h^4+73 h^3+45 h^2-21 h+10) k+(2 (7 h-2)) (h+1)^4))\}\\
\phantom{\mu_{110}:=}/\{(-h^3 k+2 h^2 k-h^2-h k+h+2)^3 \sigma_2^2(2 h-2) ((h^3 k-h^2 k+h^2+2 h+1) \sigma_2\\
\phantom{\mu_{110}:=}+k (h-1)^2) (h^2 k-2 h k+h+k+\sigma_2-1)^2\},\\
\mu_{120}:=\{-(h \sigma_2-h+1)^2 k^2 (h-1)^2 ((2 h^6 (h^2-2 h+3) (h-1)^5 k^5+h^2 (11 h^6-8 h^5+23 h^4+44 h^3+15 h^2\\
\phantom{\mu_{110}:=}-6 h+1) (h-1)^4 k^4+h (h+1) (24 h^6-15 h^5+78 h^4+156 h^3+64 h^2-23 h+4) (h-1)^3 k^3+(h+1)\\
\phantom{\mu_{110}:=}\times (26 h^7+9 h^6+102 h^5+366 h^4+344 h^3+89 h^2-22 h+4) (h-1)^2 k^2+h (h-1) (14 h^3-25 h^2\\
\phantom{\mu_{110}:=}+111 h+60)(h+1)^4 k+(3 (h^3-2 h^2+10 h+4)) (h+1)^5) \sigma_2+k (h-1)^2 (h k-k+1)(h^2 (h^5+h^4\\
\phantom{\mu_{110}:=}-4 h^3+10 h^2-5 h+1) (h-1)^3 k^3+h (h+1) (3 h^5+h^4-h^3+37 h^2-18 h+4) (h-1)^2 k^2+(h-1)\\
\phantom{\mu_{110}:=}\times (h+1) (3 h^6+2 h^5+10 h^4+52 h^3+41 h^2-14 h+4) k+(h^3-2 h^2+10 h+4) (h+1)^4))\}\\
\phantom{\mu_{110}:=}/\{2(h^2 k-h k+h+1) (-h^3 k+2 h^2 k-h^2-h k+h+2)^2 (h^2 k-2 h k+h \sigma_2+k+\sigma_2)^7 (h^3 k \sigma_2\\
\phantom{\mu_{110}:=}-h^2 k \sigma_2+h^2 k+h^2 \sigma_2-2 h k+2 h \sigma_2+k+\sigma_2)^2\},\\
\end{array}
\end{eqnarray*}
\begin{eqnarray*}
\begin{array}{l}
\mu_{111}:=\{-k^3 (h \sigma_2-h+1)^3 (h-1)^3 ((2 h^8 (h-6) (h-1)^7 k^7+2 h^7 (10 h^2-61 h-73)(h-1)^6 k^6\\
\phantom{\mu_{110}:=}+h (79 h^8-485 h^7-1240 h^6-729 h^5-10 h^4+11 h^3-10 h^2+5 h-1)(h-1)^5 k^5+h (165 h^8\\
\phantom{\mu_{110}:=}-1014 h^7-4058 h^6-4580 h^5-1782 h^4-104 h^3-12 h^2+36 h-11) (h-1)^4 k^4+(2 (100 h^9\\
\phantom{\mu_{110}:=}-613 h^8-3376 h^7-5529 h^6-3939 h^5-1147 h^4-77 h^3+19 h^2-6 h-1)) (h-1)^3 k^3+(2 (71 h^5\\
\phantom{\mu_{110}:=}-717 h^4-617 h^3-6 h^2+2 h-1)) (h-1)^2 (h+1)^4 k^2+(h-1) (55 h^4-608 h^3-398 h^2+51 h+2)\\
\phantom{\mu_{110}:=} \times (h+1)^5 k+(9 (h^3-12 h^2-5 h+2)) (h+1)^6) \sigma_2+k (h-1)^2 (h k-k+1) (2 h^7 (h-6) (h-1)^5 k^5\\
\phantom{\mu_{110}:=} +h (11 h^7-70 h^6-116 h^5-5 h^4+5 h^3-6 h^2+4 h-1) (h-1)^4 k^4+h (24 h^7-159 h^6-513 h^5\\
\phantom{\mu_{110}:=} -386 h^4-80 h^3+h^2+31 h-12) (h-1)^3 k^3+(26 h^8-177 h^7-843 h^6-1156 h^5-640 h^4-107 h^3\\
\phantom{\mu_{110}:=} +51 h^2-22 h-2) (h-1)^2 k^2+(h-1) (14 h^4-153 h^3-83 h^2+20 h-4) (h+1)^4 k+(3 (h^3-12 h^2\\
\phantom{\mu_{110}:=} -5 h+2)) (h+1)^5))\}/\{2(-h^3 k+2 h^2 k-h^2-h k+h+2)^3 \sigma_2 (h^2 k-2 h k+h \sigma_2+k+\sigma_2)^4 \\
\phantom{\mu_{110}:=} \times(h^3 k \sigma_2-h^2 k \sigma_2+h^2 k+h^2 \sigma_2-2 h k+2 h \sigma_2+k+\sigma_2)^2\},\\
\mu_{102}:=
\{k (h \sigma_2-h+1) (h^3 k-h^2 k+h^2-h k+h+k+\sigma_2-2) ((3 h^4 (h-1)^3 k^3+(15 h^4+26 h^3-6 h^2+1)\\
\phantom{\mu_{110}:=}\times (h-1)^2 k^2+(h-1) (21 h^4+56 h^3+30 h^2-6 h+8) k+(3 (3 h+1)) (h+1)^3) \sigma_2+k (h-1)^2 \\
\phantom{\mu_{110}:=}\times((6 h^3-5 h^2+h+1) (h-1)^2 k^2+(2 (h-1)) (6 h^3-3 h^2+h+5) k+6 h^3-h^2+h+17))\}\\
\phantom{\mu_{110}:=}/\{4(h^2 k-h k+h+1) (h-1) \sigma_2 (h^2 k-2 h k+h \sigma_2+k+\sigma_2) (h^3 k \sigma_2-h^2 k \sigma_2+h^2 k+h^2 \sigma_2-2 h k\\
\phantom{\mu_{110}:=}+2 h \sigma_2+k+\sigma_2)^2(h^2 k-2 h k+h+k+\sigma_2-1)\},\\
\mu_{202}:=
\{(h^2 k-h k+h+1) (h-1) ((h^7 (4 h^2-7 h+2) (h-1)^7 k^7+h^6 (50 h^3-47 h^2-54 h+20) (h-1)^6 k^6\\
\phantom{\mu_{110}:=}+h (247 h^8-26 h^7-544 h^6-112 h^5+22 h^4+20 h^3-8 h^2+4 h-1) (h-1)^5 k^5+(625 h^9+418 h^8\\
\phantom{\mu_{110}:=}-1678 h^7-1698 h^6-356 h^5-102 h^4+54 h^3+14 h^2-5 h-2) (h-1)^4 k^4+(890 h^9+1237 h^8\\
\phantom{\mu_{110}:=}-2370 h^7-5308 h^6-3444 h^5-1368 h^4-304 h^3+152 h^2-30 h-16) (h-1)^3 k^3+(724 h^9+1501 h^8\\
\phantom{\mu_{110}:=}-1630 h^7-7156 h^6-8048 h^5-4948 h^4-2148 h^3-468 h^2-78 h-28) (h-1)^2 k^2+(h-1)\\
\phantom{\mu_{110}:=}\times (315 h^4-715 h^3-55 h^2-225 h-48) (h+1)^5 k+(3 (19 h^3-50 h^2+5 h-22)) (h+1)^6) \sigma_2\\
\phantom{\mu_{110}:=}+k (h-1)^2 (h^6 (4 h^2-7 h+2) (h-1)^6 k^6+h (39 h^7-50 h^6-7 h^5-19 h^4+15 h^3-5 h^2+3 h-1)\\
\phantom{\mu_{110}:=}\times (h-1)^5 k^5+(135 h^8-85 h^7-143 h^6-163 h^5-45 h^4+47 h^3+15 h^2-9 h-2) (h-1)^4 k^4\\
\phantom{\mu_{110}:=}+(2 (115 h^8+10 h^7-211 h^6-341 h^5-285 h^4+53 h^3+63 h^2-27 h-10)) (h-1)^3 k^3+(210 h^8\\
\phantom{\mu_{110}:=}+175 h^7-508 h^6-1258 h^5-1410 h^4-350 h^3+166 h^2-174 h-56) (h-1)^2 k^2+(h-1)\\
\phantom{\mu_{110}:=}\times (99 h^8+158 h^7-275 h^6-1027 h^5-1365 h^4-805 h^3-193 h^2-233 h-92) k+(19 h^3-50 h^2\\
\phantom{\mu_{110}:=}+5 h-22) (h+1)^5))\}/\{2(-h^3 k+2 h^2 k-h^2-h k+h+2)^3 \sigma_2^2 (h^3 k \sigma_2-h^2 k \sigma_2+h^2 k+h^2 \sigma_2\\
\phantom{\mu_{110}:=}-2 h k+2 h \sigma_2+k+\sigma_2)^2 (h^2 k-2 h k+h+k+\sigma_2-1)^2\}.
\end{array}
\end{eqnarray*}

\end{document}